\numberwithin{equation}{section}
\newtheorem{thm}{Theorem}[section]
\newtheorem{defn}[thm]{Definition}
\newtheorem{cor}[thm]{Corollary}
\newtheorem{lemma}[thm]{Lemma}
\newcommand{\f}[2]{\displaystyle\frac{#1}{#2}}
\newcommand{\per}{\mathrm{Per}}
\newcommand{\sgn}{\textup{sgn}}
\newtheorem{rmrk}[thm]{Remark}
\newtheorem{exmp}[thm]{Example}
\newcommand{\htan}{h^{\tau}}
\newcommand{\hnor}{h^{\nu}}
\newcommand{\e}{\varepsilon}
\newcommand{\R}{\mathbb{R}}
\newtheorem{prop}[thm]{Proposition}
\newcommand{\abs}[1]{\left\vert{#1}\right\vert}
\newcommand{\ba}{\begin{array}}
\newcommand{\ea}{\end{array}}
\newcommand{\bthm}{\begin{thm}}
\newcommand{\ethm}{\end{thm}}
\newcommand{\bstp}{\begin{stp}}
\newcommand{\estp}{\end{stp}}
\newcommand{\blemma}{\begin{lemma}}
\newcommand{\elemma}{\end{lemma}}
\newcommand{\bprop}{\begin{prop}}
\newcommand{\eprop}{\end{prop}}
\newcommand{\bpf}{\begin{pf}}
\newcommand{\epf}{\end{pf}}
\newcommand{\bdefn}{\begin{defn}}
\newcommand{\edefn}{\end{defn}}
\newcommand{\brk}{\begin{rmrk}}
\newcommand{\erk}{\end{rmrk}}
\newcommand{\bcrl}{\begin{crl}}
\newcommand{\ecrl}{\end{crl}}
\newcommand{\beg}{\begin{exmp}}
\newcommand{\eeg}{\end{exmp}}
\newcommand{\norm}[1]{\left\|#1\right\|}
\newcommand{\beqn}{\begin{equation}}
\newcommand{\eeqn}{\end{equation}}
\newcommand{\supp}{\operatorname{supp}}
\renewcommand{\leq}{\leqslant}
\renewcommand{\geq}{\geqslant}
\newcommand{\beq}{\begin{equation}}
\newcommand{\eeq}{\end{equation}}
\newcommand{\bea}{\begin{eqnarray}}
\newcommand{\eea}{\end{eqnarray}}
\newcommand{\dive}{\mathrm{div}\,}
\newcommand{\hdiv}{H_{\dive}( \Omega;\R^2)}
\newcommand{\sh}{\mathcal{H}}
\newcommand{\divcon}{\stackrel{\wedge}{\rightharpoonup} }
\newcommand{\boxedeq}[2]{\begin{empheq}[box={\fboxsep=6pt\fbox}]{align}\label{#1}#2\end{empheq}}
\newcommand{\uone}{u_\e^{(1)}}
\newcommand{\utwo}{u_\e^{(2)}}
\title{A Model Problem for Nematic-Isotropic Transitions with Highly Disparate Elastic Constants}
\begin{document}
\renewcommand\Authfont{\small}
\renewcommand\Affilfont{\itshape\footnotesize}

\author[1]{Dmitry Golovaty\footnote{dmitry@uakron.edu}}
\author[3]{Michael Novack\footnote{mrnovack@indiana.edu}}
\author[3]{Peter Sternberg\footnote{sternber@indiana.edu}}
\author[4]{Raghavendra Venkatraman\footnote{rvenkatr@andrew.cmu.edu}}
\affil[1]{Department of Mathematics, University of Akron, Akron, OH 44325}
\affil[2,3]{Department of Mathematics, Indiana University, Bloomington, IN 47405}
\affil[4]{Department of Mathematical Sciences, Carnegie Mellon University, Pittsburgh, PA 15213}

\maketitle

\noindent {\bf Abstract:} Continuing the program initiated in \cite{GSV}, we analyze a model problem based on highly disparate elastic constants that we propose in order to understand corners and cusps that form on the boundary between the nematic and isotropic phases in a liquid crystal. For a bounded planar domain $\Omega$ we investigate the $\e \to 0$ asymptotics of the variational problem
\begin{align*}
    \inf \f{1}{2}\int_\Omega \left(  \frac{1}{\e} W(u)+\e |\nabla u|^2 + L_\e(\dive u)^2  \right) \,dx
\end{align*}
within various parameter regimes for $L_\e > 0.$ Here $u:\Omega\to\R^2$ and $W$ is a potential vanishing on the unit circle and at the origin. When $\e\ll L_\e\to 0$, we show that these functionals  $\Gamma-$converge to a constant multiple of the perimeter of the phase boundary and the divergence penalty is not felt. However, when $L_\e \equiv L  > 0$, we find that a tangency requirement along the phase boundary for competitors in  the conjectured $\Gamma$-limit becomes a mechanism for development of singularities. We establish criticality conditions for this limit and under a non-degeneracy assumption on the potential we prove compactness of energy bounded sequences in $L^2$. The role played by this tangency condition on the formation of interfacial singularities is investigated through several examples: each of these examples involves analytically rigorous reasoning motivated by numerical experiments. We argue that generically, ``wall'' singularities between $\mathbb{S}^1$-valued states of the kind analyzed in \cite{GSV} are expected near the defects along the phase boundary. 

\section{Introduction}
Our purpose in this article is to propose and then initiate an analysis of a family of models inspired by phase transitions in liquid crystals. We have in mind the islands of phase known as tactoids, whose singular phase boundaries separate a locally well-ordered state of nematic liquid crystals from a disordered isotropic state. Our models should be relevant more generally to other phase transition problems for which large disparity in the elastic constants is a salient feature. Our analysis is mainly rigorous, but also includes formal calculations as well as computational experiments. 

Many models, of course, exist for nematic liquid crystals, including the Oseen-Frank energy, based on the elastic deformations of an $\mathbb{S}^1$- or $\mathbb{S}^2$-valued director $n$, and the $Q$-tensor based Landau-de Gennes model, whose energy density consists of a bulk potential favoring either a uniaxial nematic state, an isotropic state, or both, depending on temperature. What distinguishes our effort here is the attempt to capture the often singular structure of nematic/isotropic phase boundaries using a model reminiscent of Landau-de Gennes. 

The modeling of phase transitions in thin liquid crystalline films has attracted the attention of materials scientists and physicists for some time, \cite{FTKLR,KSL,RB,vBOS}. In experiments, one observes thin liquid crystal samples separated into nematic and isotropic phases. The islands of phase, i.e. the ``tactoids," appear as planar regions, with boundaries consisting of two or more smooth curves. 
Depending on temperature and on the type of liquid crystals, these smooth boundary curves may meet each other at singular points, known as ``boojums,'' forming angles or perhaps even cusps. 

Regarding the significance of tactoids as an object of study, we quote from the recent computational study of tactoids \cite{DA}, ``Tactoid structures have been shown to act as sensors via chirality amplification and can be used to guide motile bacteria.  They are also valuable architectural elements of self assembly, for example providing nucleation sites for growth of the smectic phase." 

In modeling these regions, the typical approach found in the materials science literature is to use a director theory and to postulate a surface energy that depends on the angle the director $ n$ makes with the normal $\nu$ to the phase boundary. Calling the region occupied by the  phase of uniaxial nematic say $\Omega_N$, and  writing $ n=(\cos\theta,\sin\theta)$ and $ \nu=(\cos\phi,\sin\phi)$, this leads to minimization of a surface energy of the form
\beq
F_s( n):=\int_{\partial\Omega_N}\sigma(\theta-\phi)\,ds \label{RP}
\eeq
where a typical choice for the function $\sigma:\R\to\R$, based on symmetries (and simplicity), is given by
\[
\sigma(\theta-\phi)=c_1+c_2\cos2(\theta-\phi),
\]
a form referred to as a Rapini-Papoular type surface density, (see e.g. \cite{MN}, section 3.4). In some studies within the physics literature the phase domain $\Omega_N$ is taken as a given region having a simple geometry such as a disk and then the minimization, taken over director fields $ n:\Omega_N\to \mathbb{S}^1$, may involve  coupling the surface term above to an elastic term such as $\int_{\Omega_N}\abs{\nabla n}^2\,dx$, corresponding to the so-called `equal constants' form of elastic energy, see e.g. \cite{vBOS}. In other studies, the shape itself is an unknown, but then, due to the difficulty of the analysis, the director field is often `frozen,' that is, taken to be a constant so that there is no elastic energy contribution and one minimizes \eqref{RP} alone. Then the problem resembles somewhat the Wulff shape problem arising in the classical study of crystal morphology, see e.g. \cite{F,RB}. 

Rather than postulating a specific surface energy, here we seek a model based on an order parameter, $u:\Omega\to\R^2$
defined on a planar domain $\Omega$ in which the singularities of the phase boundary emerge as a result of large disparity between the values of the elastic constants.
We are not alone in taking this viewpoint; see for example,
\cite{DA}, where the authors write ``It is clear that significant shape deformation is only achieved with the introduction of elastic anisotropy."

In \cite{GSV}, our first endeavor in this direction, we propose a model problem coupling the Ginzburg-Landau potential to an elastic energy density with large elastic disparity, namely
\begin{equation}
\inf_{u\in H^1(\Omega;\R^2)}\f{1}{2} \int_\Omega \left(\f{1}{\e}(1-|u|^2)^2+ \e|\nabla u|^2  +L(\dive u)^2 \right) \,dx.\label{divBBH}
\end{equation}
The minimization is taken over competitors satisfying an $\mathbb{S}^1$-valued Dirichlet condition on $\partial\Omega$ so as to avoid a trivial minimizer.
Here one might view the positive constant $\e\ll 1$ as being comparable in size to the elastic constant $L_1$ in say a Landau-de Gennes elastic energy density while the positive constant $L$, independent of $\e$, is playing the role of $L_2$, the coefficient of squared divergence in more standard elastic energy densities.

This choice of potential clearly favors $\mathbb{S}^1$-valued states, 
which are a stand-in in our models for uniaxial nematic states. As such, the model \eqref{divBBH} precludes any phase transitions between $\mathbb{S}^1$-valued states and the isotropic state $u=0$, and corresponds to the situation where the temperature--and therefore the potential-- favor only the nematic state. Analysis of \eqref{divBBH} in the $\e\to 0$ limit involves a `wall energy' along a jump set $J_u$ penalizing jumps of any $\mathbb{S}^1$-valued competitor $u$, and bulk elastic energy favoring low divergence. The conjectured $\Gamma$-limit of \eqref{divBBH} is
\beq
\frac{L}{2} \int_{\Omega} (\dive u)^2 \,dx + \frac{1}{6}\int_{J_{u}\cap \Omega} |u_+ - u_-|^3 \,d \sh^1,\label{limdivBBH}
\eeq
where $u_+$ and $u_-$ are the one-sided traces of $u$ along $J_u$. The natural space for competitors for this limit should be some subset of $H_\dive(\Omega;\mathbb{S}^1)$, the Hilbert space of $L^2$ vector fields having $L^2$ divergence. In order to make sense of the jump set we make the additional assumption in \cite {GSV} that $u\in BV(\Omega;\mathbb{S}^1)$, though this is surely not optimal. As a simple consequence of the Divergence Theorem, it follows that allowable jumps for an $H_\dive$ vector field must satisfy continuity of the normal component
\beq
u_+\cdot\nu=u_-\cdot \nu\quad\mbox{along}\;J_u,\label{tangood}
\eeq
where $\nu$ denotes the normal to $J_u$. Hence the cubic jump cost is penalizing the jump in the tangential component only.

In the present paper, we allow for co-existence of both nematic and isotropic phases by replacing the Ginzburg-Landau potential in \eqref{divBBH} with a potential $W:\R^2\to [0,\infty)$ that still depends radially on $u$ but that instead vanishes on $\mathbb{S}^1\cup\{0\}$. This is reminiscent of the zero set of the Landau-de Gennes potential in the critical temperature regime within the thin film context, see e.g. \cite{BPP}.  A prototype for what we have in mind is a potential of the form $W(u)=W_{CSH}(u):=\abs{u}^2\big(\abs{u}^2-1\big)^2$, or what is known in other physical contexts as the Chern-Simons-Higgs potential, see e.g. \cite{Spirn-Kurzke}.

We thus arrive at two models based on this potential. In the first model, analyzed in Section 2, we examine the asymptotic limit in $\e$ of the energy
\[
F_\e(u):= \f{1}{2}\int_\Omega \left(\f{1}{\e}W(u)+ \e|\nabla u|^2  +L_\e(\dive u)^2 \right) \,dx,
 \]
 where we assume $\e \ll L_\e\to 0$. Our main result for this model is Theorem \ref{Leps}, which states that in the $L^1$-topology, this sequence
of energies $\Gamma$-converges to a perimeter functional, measuring the arclength of the phase boundary between the $\mathbb{S}^1$-valued phase and the zero phase. In short, despite the much stronger penalty on divergence--think of
say $L_\e=\frac{1}{\abs{\log\e}}$--this amount of `elastic disparity' is too weak to be felt in the limit. In particular, minimizers of the limit, even under a boundary condition or area constraint to induce co-existence of $\mathbb{S}^1$-valued and $0$ phases, will have smooth phase boundaries. We mention that in \cite{Spirn-Kurzke}, the authors study the $\Gamma$-convergence of $\frac{1}{\e}F_\e$ for $L_\e=0$. In that scaling, vortices rather than perimeter contributes at leading order.

Our second model, and the main focus of our paper, involves the same type of potential $W$ as in $F_\e$, but now we   `ramp up' the cost of divergence still further, leading us to the energy
\beq
E_\e(u) := \f{1}{2} \int_\Omega \left(  \f{1}{\e}W(u)+\e|\nabla u|^2 +L(\dive u)^2 \right) \,dx,\label{introLenergy}
\eeq
where $L$ is a positive constant {\it independent of} $\e$. As $\e\to 0$ in this model, the jump set $J_u$ features two distinct types of discontinuities: as in \eqref{limdivBBH}, there are what we will call `walls' involving a jump discontinuity between two $\mathbb{S}^1$-valued states that respect \eqref{tangood}, and there are what we will call `interfaces' involving a jump between an $\mathbb{S}^1$-valued state and the isotropic $0$ phase.

We mention that one can consider minimization of $E_\e$ subject to a Dirichlet condition $g:\partial\Omega\to\R^2$, or a constraint such as $\int_{\Omega}\abs{u}^2=const$, or both in order to induce the co-existence of phases. The weak $H_\dive$ convergence of energy bounded sequences, however, implies that the appropriate condition for the limiting functional $E_0$ is that it inherits only the condition 
\beq
u\cdot n_{\partial\Omega}=g\cdot n_{\partial\Omega}\quad\mbox{along}\;\partial\Omega\label{diri},
\eeq
or simply $\mbox{meas}\big(\{u=0\}\big)=const$ in the case of the constraint. 

In any event,  it is the interfaces that represent the nematic/isotropic phase boundary and in light of the requirement \eqref{tangood}, one sees that whatever form the $\Gamma$-limit takes, the competitors, being in $H_\dive$, must have $\mathbb{S}^1$-valued traces {\it that are tangent to the phase boundaries}. As we will demonstrate through examples and numerics in Section 4, it is this tangency requirement that may induce singularities in the phase boundary. On this point, we mention that in this article we chose to penalize divergence more than other elastic energy terms, but had we replaced the term $L\int (\dive u)^2$ in \eqref{introLenergy} by $L\int (\dive R_\theta u)^2$ where $R_\theta$ is any rotation matrix, we would arrive at a limiting requirement on the nematic/isotropic interface in which tangency is replaced by $u$ making some non-zero angle with the tangent to the phase boundary.  In particular, for $\theta=\pi/2$ one penalizes the curl rather than the divergence and the resulting interface requirement is that the trace is orthogonal to the boundary.

In Fig. \ref{compareoleg}, we present an example of experimental nematic/isotropic configuration obtained in the laboratory of Oleg Lavrentovich along with a figure showing a numerically generated phase boundary based on gradient flow for $E_\e$. Both figures represent transient states but we point out the similar nature of the singular phase boundaries. Note that in the experimental picture, the phase boundary is singular only for the isotropic island whose surrounding nematic phase has degree $0$ on the boundary of the isotropic tactoid, not for the island where the degree is $1$. This distinction will come up frequently in our analysis.

\begin{figure}
\begin{center}
\includegraphics[scale=.3]{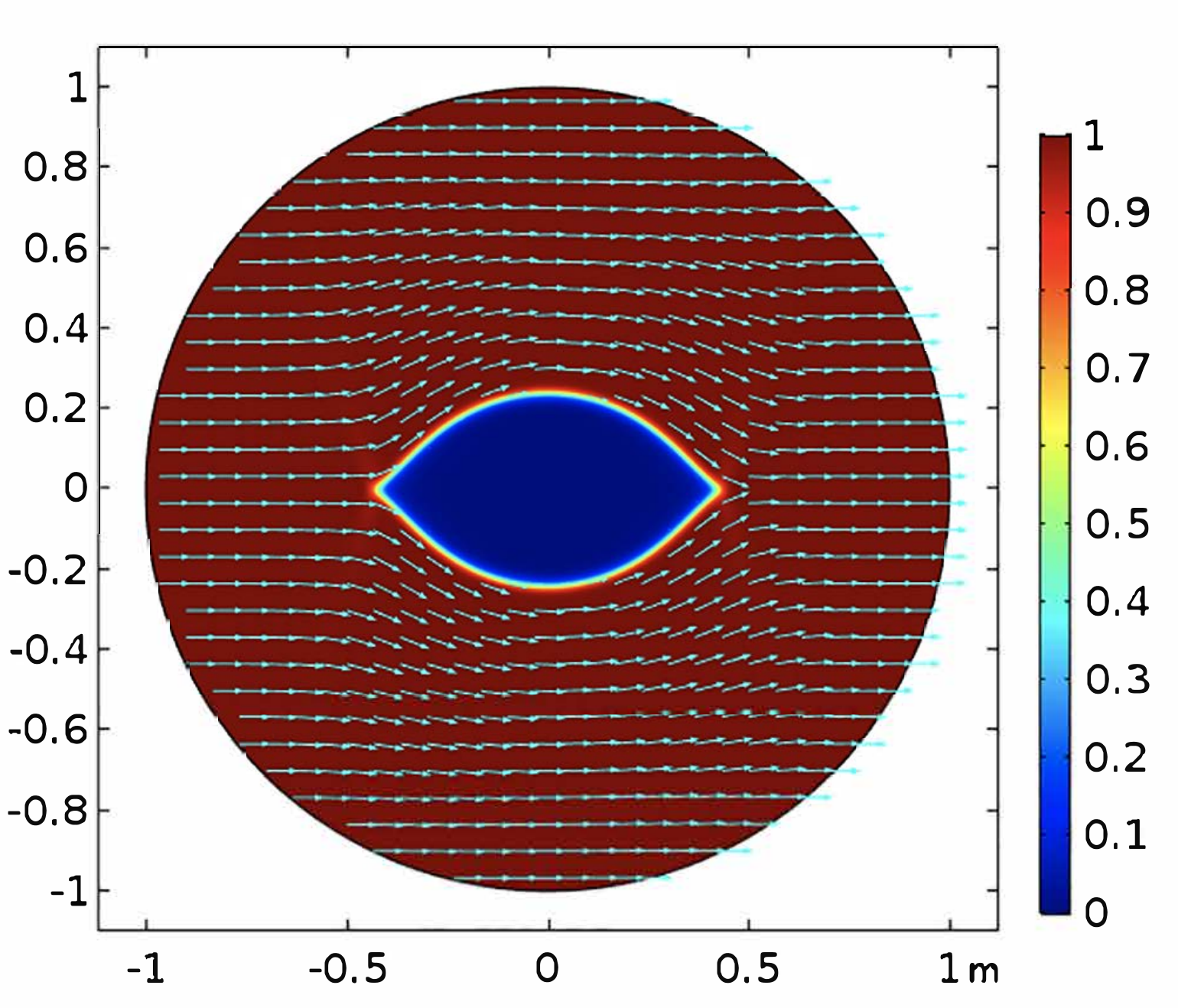}\qquad
\includegraphics[scale=.53]{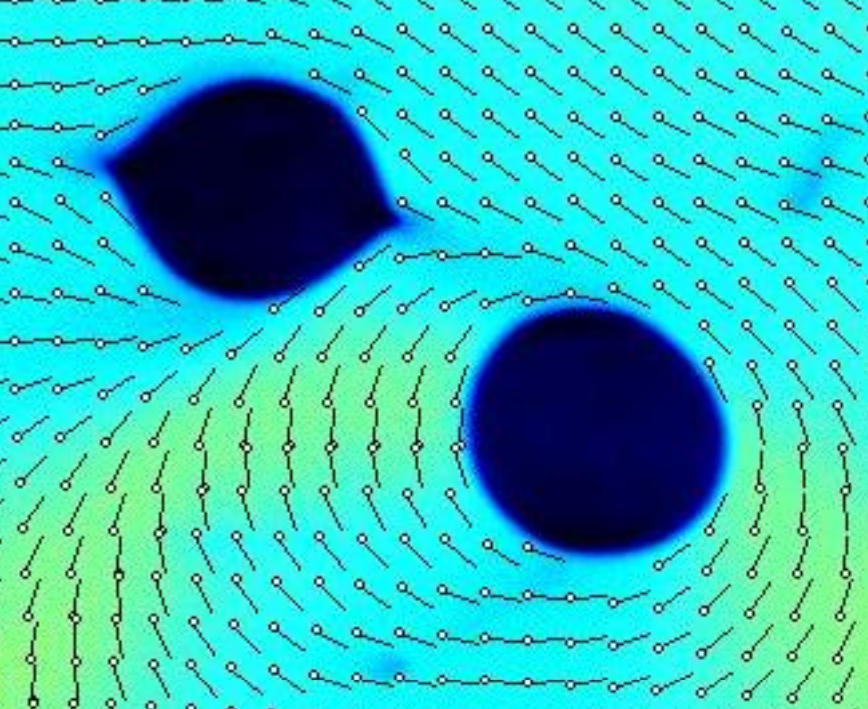}
\caption{Tactoids observed in simulations (left) and the experiments (right). The figure on the right is courtesy of O.~D.~Lavrentovich.}\label{compareoleg}
\end{center}
\end{figure}
Regarding a rigorous identification of the $\Gamma$-limit of $E_\e$, we only have partial results at this point. We present rigorous compactness results in $L^2(\Omega;\R^2)$ in Theorem \ref{fullcompact} based on an adaptation of \cite{DKMO}, but roughly put, it is easy to verify that any limit $u$ of an energy bounded sequence, i.e. $\{u_\e\}$ such that
$E_\e(u_\e)<C$, is a vector field $u\in H_\dive(\Omega;\mathbb{S}^1\cup\{0\})$ such that the isotropic phase $\{x:u(x)=0\}$ is a set of finite perimeter. Then making the extra assumption that $u$ is of bounded variation in the nematic phase where $u(x)\in \mathbb{S}^1$, one can invoke a combination of known techniques \cite{GSV,pollower} to establish a lower bound on the limit of the form 
\beq
E_0(u)=\f{L}{2} \int_\Omega (\dive u)^2 \,d x+c_0\, \mathrm{Per}_\Omega \big(\{|u| = 0\}\big) + \int_{J_u \cap \{ |u| = 1\}} K(u \cdot \nu) \,d \sh^1,\label{Ezerointro}
\eeq
where $c_0$ is the standard Modica-Mortola cost of an interface, cf. \eqref{MMcost}, and $K:\R\to [0,\infty)$ is a wall cost, arising through an abstractly defined solution to a certain cell problem. We wish to emphasize that, unlike for example \eqref{RP}, the limiting  problem that arises involves both interfacial energy terms and a bulk term.

We strongly suspect that this wall cost $K$ is in fact the cost associated with the heteroclinic connection between the states $(-u\cdot\tau,u\cdot \nu)$
and $(u\cdot\tau,u\cdot \nu)$ where $\tau$ is the approximate tangent to the jump set, see \eqref{1d} and \eqref{CSHwallcost}. The upper bound based on a recovery sequence for such a ``one-dimensional" wall where only the tangential component varies across the boundary layer is the content of Theorem \ref{Lupperbound}. 

The optimality of one-dimensional walls is a delicate point that turns out to hold in the analysis of \eqref{divBBH}-\eqref{limdivBBH}, cf. \cite{GSV}, as well as in the analysis of the divergence-free, or equivalently $L=\infty$, versions of these problems known as the Aviles-Giga problem, see e.g. \cite{ADM,AG,CD,DKMO,Ignat, Lorent,Lorent2,polupper1,pollower}. However, for Aviles-Giga and in \cite{GSV}, the matching of lower bound to upper bound is achieved through the somewhat miraculous Jin-Kohn entropy, cf. \cite{JinKohn} and \eqref{jinkohn}. The divergence of this vector field on the one hand bounds the Aviles-Giga energy from below but at the same time yields a value for the cost of a wall that coincides with the one-dimensional upper bound construction described above. As far as we can tell, there is no analogous entropy that works similarly for \eqref{introLenergy}.

In Section \ref{oned}, in contrast to the partial results from Section \ref{Conjecture}, we establish a complete $\Gamma$-convergence analysis along with optimal compactness, in the case where $\Omega$ is an interval. 

In Section \ref{critsec}, we turn to the derivation of criticality conditions for the proposed $\Gamma$-limit, $E_0$. As in \cite{GSV}, we find that in the $\mathbb{S}^1$-valued phase, away from walls, we can phrase criticality in terms of a system of conservation laws sharing characteristics, cf. Corollary \ref{conservation}. Characteristics turn out to be circular arcs along which divergence is constant with the curvature of the arc being given by the value of the divergence. We also explore criticality conditions for the wall and interface in Theorems \ref{critthm1} and \ref{critthm2}, as well as for possible junctions between walls and interfaces in Theorem \ref{junction}, whose somewhat technical proof we delay until the appendix.

Section \ref{examp} is crucial to our paper in that we explore the possible morphology of vortices, interfaces and walls through a series of examples. We focus on constructing critical points to the formal $L\to\infty$ limit of $E_0$ which one might describe as the Aviles-Giga $\Gamma$-limit augmented by isotropic regions, see \eqref{AVlimit}. These constructions are in particular divergence-free competitors for $E_0$ for $L$ finite that should be close to optimal for $L$ large. One might expect that when no area constraint on the size of the isotropic phase is imposed and  $\mathbb{S}^1$-valued Dirichlet data $g$ is specified in \eqref{diri} for $E_0$,  then only critical points that are nematic--i.e. $\mathbb{S}^1$-valued-- would emerge, with perhaps a certain number of defects in order to accommodate the degree of $g$, as in \cite{BBH}. However, in Example \ref{astroid}, we take $\Omega$ to be the unit disk and $g$ to have negative degree, and we show that, somewhat surprisingly, an $O(1)$ isotropic region opens up. We provide a possible explanation for this phenomenon in Theorem \ref{thmdiv}. 

In Section \ref{eyeballs}, we construct a divergence-free example in all of $\R^2$ in which a singular phase boundary encloses an isotropic island and in which the infinite nematic complement of this island obeys a trivial degree zero condition at infinity, i.e. $u\to \vec{e}_1$ as $\abs{x}\to\infty$. Unlike in the first example, this island is induced through an area constraint. This somewhat delicate calculation involves construction of both interfaces and walls with proper junction conditions holding at their intersection. 

In this section we also comment on the following crucial feature of the model observed in several of our examples. At defects on the phase boundary, the director $u$ often switches the sense of tangency. If a defect is a corner in the interior of the domain and a change in tangency occurs, then walls necessarily emanate from the defect in order to avoid infinite energy from the bulk divergence term; see Fig. \ref{nowalls} and the discussions at the end of Section \ref{divraghav} and preceding Example \ref{isotropiceyeball} .

Needless to say, this article represents just the initial investigation of a problem which holds within it a rich array of phenomena yet to be understood and questions to be pursued. We also mention that upgrading this model to the setting of $Q$-tensors should not pose significant obstacles. \\

\section{First try: A model whose elastic disparity is weak}\label{firsttry}

In this section, we begin our examination of the effect of disparity in elastic energy. Throughout this section, we will consider a continuous potential $W:\R^2\to [0,\infty)$ which vanishes on
$\mathbb{S}^1\cup\{0\}$. We assume that for some continuous function $V:\R\to [0,\infty)$, one has $W(u)=V(\abs{u})$ with then $V(0)=V(1)=0$ and $V>0$ elsewhere. The prototype for what we have in mind is the Chern-Simons-Higgs potential 
\beq
W_{CSH}(u):=\abs{u}^2(\abs{u}^2-1)^2.\label{CSH}
\eeq
Then for a sequence of positive numbers $L_\e \downarrow 0,$ we consider the sequence of functionals 
\begin{align}
\label{eq:fep}
F_\e(u) := \left\{ 
\begin{array}{cc}
\f{1}{2}\displaystyle\int_\Omega\left(\displaystyle\frac{1}{\e}W(u) + \e|\nabla u|^2 +L_\e(\dive u)^2 \right) \,dx &\mbox{if}\; u\in H^1(\Omega;\R^2), \\ \\
+\infty & \mbox{ otherwise}.
\end{array}
\right.
\end{align}

Though the $\Gamma$-convergence result below holds for any sequence $\{L_\e\}$ approaching zero, we are especially interested in the situation where
\[
\frac{L_\e}{\e}\to\infty\;\mbox{as}\;\e\to 0,
\]
so that the divergence term in the elastic energy is heavily emphasized. Our goal is to explore whether or not this disparity can produce a $\Gamma$-limit whose minimizers possess the types of phase boundary singularities reminiscent of isotropic-nematic interfaces as described in the introduction. What we shall find is that this level of elastic disparity is in fact {\it not} sufficiently strong to achieve this goal.

To this end, we define our candidate for the $\Gamma$-limit:
\begin{align*}
F_0(u) := \left\{
\begin{array}{cc} 
c_0 \per_\Omega(\{|u|=0\}) &\mbox{if}\; |u| \in BV(\Omega;\{0,1\}), \\ \\
+\infty &\mbox{ otherwise}.
\end{array}
\right.
\end{align*}
Here, 
\beq
c_0 := \int_0^1 \sqrt{V(s)}\,ds.\label{MMcost}
\eeq
The reader may well recognize this $\Gamma$-limit as precisely the well-known limit of the Modica-Mortola energies, an indication that to leading order in the energy, the divergence term has no effect on the asymptotic behavior of minimizers.
 
Our main result for this section is:
\bthm\label{Leps}
The sequence $\{ F_\e \}$ $\Gamma$-converges to $F_0$ in the topology induced by the $L^1$ norm of the modulus $| \cdot |$. That is,
\begin{enumerate}
\item for any $u\in L^1(\Omega;\R^2)$ and for any sequence $\{u_\e\}$ in $L^1(\Omega;\R^2)$, 
\begin{equation}\label{lbnd}
	|u_\e| \rightarrow |u| \text{ in } L^1(\Omega) \text{ implies } \liminf_{\e \rightarrow \infty} F_\e(u_\e) \geq F_0(u),
\end{equation} 
and
\item for each $u\in L^1(\Omega,\R^2)$ there exists a recovery sequence $\{w_\e\}$ in $L^1(\Omega,\R^2)$ satisfying 
\begin{equation}\label{upper}
|w_\e| \rightarrow |u| \text{ in } L^1(\Omega;\R^2)\quad\textit{ and}\quad
\limsup_{\e\rightarrow\infty} F_\e(w_\e)\leq F_0(u).
\end{equation} 
In fact, we can construct the sequence $\{w_\e\}$ so that $w_\e \to u$ in $L^1$.
\end{enumerate}
\ethm

\begin{rmrk} Regarding the asymptotic behavior of global minimizers, this result does not seem to address the possibility of a phase transition since there is no `incentive' for a minimizer of $F_\e$ to take on both $0$ and $\mathbb{S}^1$ values. To encourage a phase transition for a minimizer, one could, for example, impose a mass constraint such as
\begin{align*}
\int_\Omega |u_\e|^2 \,dx = m\quad\mbox{or}\quad \int_\Omega |u_\e| \,dx = m\quad\mbox{where}\; m\in (0,|\Omega|)\quad\mbox{with}\;\abs{\Omega}=\;\mbox{Lebesgue measure of}\;\Omega.
\end{align*}
Alternatively, one could impose a Dirichlet condition on $\partial\Omega$ such as $u_\e=g_\e$ where $g_\e$ is $\mathbb{S}^1$-valued on one portion of the boundary and then transitions smoothly down to $0$ on the rest of the boundary. Either of these alterations in the problem can be easily accommodated using what are by now standard techniques in $\Gamma$-convergence, see e.g.
\cite{ModicaARMA,OwenRubinsteinSternberg,Sternberg86}.
However, in order to present the main ideas without excessive technicalities, we formulate and prove a $\Gamma$-convergence theorem without either of these conditions, and merely remark that they could be incorporated if desired. 
\end{rmrk}

Though as indicated below \eqref{upper}, we can in fact establish $\Gamma$-convergence in the stronger topology $L^1(\Omega)$, it is not possible to obtain $L^1$-compactness for an arbitrary energy bounded sequence due to the degeneracy of the well $\mathbb{S}^1$. However, $L^1$-compactness of $\{|u_\e|\}$ follows by a standard argument, cf. e.g. \cite[Proposition 3]{Sternberg86}.
\begin{prop}
Let $\{ u_\e \}$ be a sequence of maps from $\Omega$ to $\R^2$ and assume that the sequence of energies $F_\e(u_\e)$ is uniformly bounded. Then there exists a subsequence $\{ u_{\e_j} \}$ and $u \in L^1(\Omega; \mathbb{S}^1\cup \{0\})$ such that $|u_{\e_j}| \to |u|$ in $L^1(\Omega)$.
\end{prop}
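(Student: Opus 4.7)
The plan is to adapt the classical Modica--Mortola compactness argument to the degenerate potential $W$. Define the primitive $\Phi : [0, \infty) \to [0, \infty)$ by $\Phi(t) := \int_0^t \sqrt{V(s)}\, ds$. Since $V$ is continuous and strictly positive off $\{0, 1\}$, $\Phi$ is continuous and strictly increasing, and so admits a continuous inverse. Set $v_\e := \Phi(|u_\e|)$. The point of this substitution is the Modica--Mortola inequality: combining the chain rule with the pointwise bound $\bigl|\nabla |u_\e|\bigr| \leq |\nabla u_\e|$ and Young's inequality yields
\begin{equation*}
|\nabla v_\e| \,=\, \sqrt{V(|u_\e|)}\,\bigl|\nabla |u_\e|\bigr| \,\leq\, \tfrac{1}{2}\Bigl(\tfrac{1}{\e} W(u_\e) + \e |\nabla u_\e|^2\Bigr),
\end{equation*}
so the uniform energy bound $F_\e(u_\e) \leq C$ gives $\|\nabla v_\e\|_{L^1(\Omega)} \leq 2 F_\e(u_\e) \leq C$, after discarding the nonnegative divergence contribution.

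Next I would upgrade this to a $W^{1,1}$ bound on $v_\e$. The point is that $\int_\Omega W(u_\e)\, dx \leq 2\e F_\e(u_\e) \to 0$, so together with coercivity of $V$ at infinity (which holds automatically for the CSH prototype \eqref{CSH}), the measure of the set $\{|u_\e| > M\}$ vanishes uniformly in $\e$ as $M \to \infty$. On its complement $v_\e \leq \Phi(M)$, while on the large-value set the integrand $v_\e$ is controlled by $W(u_\e)$. A routine truncation argument then delivers $\|v_\e\|_{W^{1,1}(\Omega)} \leq C$, and Rellich--Kondrachov extracts a subsequence $v_{\e_j} \to v$ strongly in $L^1(\Omega)$ and almost everywhere.

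Defining $|u| := \Phi^{-1}(v)$ a.e.\ gives $|u_{\e_j}| \to |u|$ a.e.\ by continuity of $\Phi^{-1}$. Fatou's lemma applied to $\int_\Omega V(|u_\e|)\, dx \to 0$ then forces $V(|u|) = 0$ a.e., so $|u| \in \{0, 1\}$ a.e.\ and in particular $u$ can be chosen in $L^1(\Omega; \mathbb{S}^1 \cup \{0\})$. To promote a.e.\ convergence of $|u_{\e_j}|$ to $L^1$ convergence, I would verify uniform integrability: coercivity of $V$ at infinity supplies a constant $c > 0$ with $V(t) \geq c\, t$ for $t$ large, so $\int_{\{|u_\e| \geq M\}} |u_\e|\, dx \leq c^{-1} \int_\Omega V(|u_\e|)\, dx \to 0$ uniformly in $\e$. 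Since the limit $|u|$ is bounded by $1$, Vitali's convergence theorem closes the argument.

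The main technical hurdle throughout is the absence of any a priori $L^\infty$ bound on $|u_\e|$, which must be ruled out by coercivity of $V$ at infinity (as opposed to the Ginzburg--Landau case where truncation is immediate); apart from this, the argument is a straightforward application of the composition trick together with Rellich and Vitali, in the spirit of \cite{Sternberg86}.
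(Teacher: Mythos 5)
The paper gives no proof of this proposition, simply citing the standard argument of \cite[Proposition 3]{Sternberg86}, so you are reconstructing a proof rather than matching one. Your reconstruction is essentially correct and in the right spirit (Modica--Mortola composition trick, $BV$ bound, a.e.\ convergence via continuity of $\Phi^{-1}$, identification of the limit through Fatou). Two points deserve comment.

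First, you are right to flag coercivity: the hypotheses on $V$ stated at the top of Section~\ref{firsttry} are only continuity and strict positivity off $\{0,1\}$, which is \emph{not} enough to conclude $|u_\e| \to |u|$ in $L^1$ — without some growth of $V$ at infinity, $L^1$ mass of $|u_\e|$ could escape to infinity while $\int W(u_\e) \to 0$. This is an implicit assumption the paper imports from \cite{Sternberg86} (and is of course satisfied by $W_{CSH}$). Making it explicit, as you do, is the right call.

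Second, the route through a $W^{1,1}$ bound on the unbounded quantity $v_\e = \Phi(|u_\e|)$ is a bit heavier than the standard one. The standard argument (which the paper itself deploys in the lower-bound proof that immediately follows) truncates first: set $u_\e^* := u_\e/\max(|u_\e|,1)$, so $|u_\e^*| = \min(|u_\e|,1) \leq 1$, then $W(u_\e^*) \leq W(u_\e)$ and $|\nabla u_\e^*| \leq |\nabla u_\e|$, giving $\Phi(|u_\e^*|)$ bounded in $BV\cap L^\infty$. Rellich, continuity of $\Phi^{-1}$, and dominated convergence (no Vitali needed) give $|u_\e^*| \to |u|$ in $L^1$ with $|u| \in \{0,1\}$ a.e.; coercivity is then used only once, to show $(|u_\e|-1)_+ \to 0$ in $L^1$. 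Your version instead needs both $\Phi(t)\lesssim V(t)$ for large $t$ (to control $\|v_\e\|_{L^1}$ on the large-value set) and superlinear growth of $V$ (for uniform integrability in Vitali) — two growth conditions instead of one. Also, as written, the Vitali step is slightly imprecise: $\int_{\{|u_{\e_j}|\geq M\}} |u_{\e_j}| \leq c_M^{-1}\int V(|u_{\e_j}|) \leq c_M^{-1} C\e_j$ is a bound going to zero in $j$, not uniformly in $M$ unless you let the coercivity constant $c_M = \inf_{t\geq M} V(t)/t \to \infty$; this is where superlinear growth is genuinely used. Both issues are repairable, but the truncation route avoids them.
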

As observed in \cite{novack1}, this rather weak form of compactness is nonetheless sufficient to imply the existence of local minimizers of $F_\e$ given a local minimizer of $F_0$ which is isolated in this weaker topology, by modifying an argument of \cite{kohnsternberg}. For example, on a ``dumbbell"-type domain, there always exist local minimizers of $F_\e$ for $\e$ sufficiently small, cf. \cite[Theorems 4.2, 5.1]{novack1}.
\begin{proof}[Proof of the lower semi-continuity condition \eqref{lbnd}] Lower semi-continuity follows as in the Modica-Mortola setting since one simply ignores the divergence term. Since the argument is short, however, we present it here. The cases in which $\liminf_{\e \to 0}F_\e(u_\e)=\infty$ or $W(u)\neq 0$ on a set of positive measure are trivial. We therefore assume that $\liminf_{\e \to 0} F_\e(u_\e) = C < \infty$, and suppose that $|u_\e| \to |u|$ in $L^1(\Omega).$ Suppose also for now that $|u_\e| \leqslant 1,$ an assertion we will justify later by means of a truncation procedure.  In the argument below, we will make use of the function $\Phi(t):= \int_0^t \sqrt{V(s)} \,ds.$ As $L_\e \geqslant 0,$ we have
\begin{align*}
F_\e(u_\e) = \f{1}{2}\int_\Omega \left(\frac{1}{\e}W(u_\e)+ \e |\nabla u_\e|^2  + L_\e(\dive u_\e)^2\right) \,dx &\geqslant \int_\Omega \sqrt{V(|u_\e|)}\big|\nabla |u_\e|\big| \,dx \\
&\geqslant  \int_\Omega |\nabla \Phi(|u_\e|)|\,dx . 
\end{align*}
By the assumption that $\liminf F_\e(u_\e) = C < \infty,$ we obtain a uniform bound on $\{\Phi(|u_\e|)\}_{\e > 0}$ in $BV(\Omega)$, implying the existence of a subsequence converging in $L^1$ to $\Phi(|u|)$. Therefore, by lower semi-continuity in $BV$, 
\begin{align*}
\liminf_{\e \to 0} F_\e(u_\e) &\geqslant  \liminf_{\e \to 0} \int_\Omega |\nabla \Phi(|u_\e|) |\,dx \\
&\geqslant  \int_\Omega |\nabla \Phi(|u|)| \,dx \\
&= c_0 \per_\Omega(\{|u|=1\}). 
\end{align*}
This then completes the proof of \eqref{lbnd} under the assumption that $|u_\e| \leqslant 1.$ 

If it does not hold that $|u_\e|\leq 1$ then we define
\begin{align*}
u_\e^*(x) := \left\{
\begin{array}{cc}
u_\e(x) & \mbox{if}\;|u_\e(x)| \leqslant 1, \\
\frac{u_\e(x)}{|u_\e(x)|} & \mbox{if}\; |u_\e(x)| > 1. 
\end{array}
\right.
\end{align*}
We compute that 
\begin{align}\label{abc}
F_\e(u_\e) \geqslant \f{1}{2}\int_\Omega \left(\frac{1}{\e}W(u_\e)+\e|\nabla u_\e|^2 \right) \,dx \geqslant \f{1}{2}\int_\Omega \left( \frac{1}{\e} W(u_\e^*) +\e|\nabla u_\e^*|^2\right)\,dx.
\end{align}
Finally, we have that 
\begin{align*}
 \|\,|u_\e^*| - |u|\,\|_{L^1(\Omega)} \leqslant \|\,|u_\e| - |u|\,\|_{L^1(\Omega)} \to 0,
\end{align*}
so that we can combine the previous arguments with \eqref{abc} to obtain lower semi-continuity for the original sequence $\{u_\e\}$.
\end{proof}
\begin{proof}[Proof of the recovery sequence condition \eqref{upper}] Suppose we are given $u : \Omega \to \mathbb{S}^1 \cup \{0\}$ with $|u| \in BV(\Omega;\{0,1\}).$ We will construct a sequence $w_\e \subset H^1(\Omega;\R^2)$  with $w_\e \to u$ in $L^1(\Omega;\R^2)$ such that $\limsup_{\e \to 0}F_\e(w_\e) \leq F_0(u)$. We first briefly discuss the main idea, in order to motivate the construction that follows. Suppose that $u$ is smooth on the set, say $N$, where it is $\mathbb{S}^1$-valued, except for finitely many singular points $a_i$, and suppose  $u$ carries degree $d_i$ around each ``vortex" $a_i$. Suppose also that $\partial N$ is smooth.  We would like to define $w_\e$ using a boundary layer near $\partial N$ which bridges the values of $\left. u\right|_N$ near $\partial N$ to $0$ outside. In order to recover the correct $\Gamma$-limit with constant $2c_0$, we must define $w_\e$ on a neighborhoods $\mathcal{N}_\e$ of $N$ so that
\begin{equation}\notag\f{1}{2}
\int_{\mathcal{N}_\e}\left(\frac{1}{\e}W(w_\e)+\e|\nabla w_\e|^2 + L_\e (\dive w_\e)^2 \right)\,dx \to c_0\, \per_\Omega(\{|u|=1\}).
\end{equation}
As this is the least upper bound we could achieve even if $L_\e=0$, we must therefore construct $w_\e$ on $\mathcal{N}_\e$ so that 
\begin{equation}\notag
\int_{\mathcal{N}_\e} L_\e (\dive w_\e)^2 \,dx \to 0
\end{equation}
and so that the gradient squared and potential terms give the correct asymptotic limit. Since there is no assumption on how fast the sequence $\{ L_\e \}$ approaches zero, a natural construction to try is to define $w_\e$ on $\mathcal{N}_\e$ so that it is divergence-free there. This can be done by setting
\begin{equation}\label{tangent}
w_\e = f_\e (d(x))(\nabla^\perp d )(x),
\end{equation}
where $d(x)$ is the distance function to $\partial N$ and $f_\e$ is a suitably defined scalar function bridging the values $0$ and $1$.  Then 
\begin{equation}\label{divzero}
\dive w_\e = f_\e'(d)(\nabla d)\cdot \nabla^\perp d + f_\e(d)\dive(\nabla^\perp d)=0.
\end{equation}
It is easy to check that if $w_\e$ is a smooth, non-zero vector field tangent to level sets of $d$, as above, then its degree restricted to such a level set is $1$. If, however, $\sum_i d_i\neq 1$, then degree considerations imply that it is impossible to define smooth $w_\e$ which are non-zero and tangent to $\partial N$ but equal to $u$ in the interior of $N$ away from the boundary. In addition, even if $\sum d_i=1$, defining $w_\e$ inside $N$ by mollifying $u$ could yield vortices which result in unbounded energy as $\e \to 0$; see Theorem \ref{thmdiv}. 

To address these issues, it is instructive to consider the case in which $\Omega$ is the ball of radius $2$ centered at the origin, $N:=\{ |u|=1 \}$ is the unit disk with $u\equiv \vec{e}_1$ there and $u$ vanishes on the annulus $\{1<\abs{x}<2\}$. As explained above, there is no smooth field tangent to the boundary of the disk and equal to $u$ inside the disk. However, suppose we alter the boundary of the disk by adding two small cusps. Then we can define a continuous vector field tangent to the modified boundary, except at the cusps, which has degree zero. This tangent vector field allows for the construction of a boundary layer similar to \eqref{tangent} which contributes a perimeter term differing from $F_0(u)$ by a negligible amount, and a second, $\mathbb{S}^1$-valued boundary layer inside the disk which bridges the degree zero tangent field to the constant $\vec{e}_1$. The energetic contribution of this second layer vanishes in the limit.

Our general construction utilizes this basic idea. Given any component of the nematic region $N$, we first approximate $u$ there by  a map with degree zero around any closed curve lying in that component. This allows us to avoid the creation of vortices which are energetically too expensive for the divergence term.  Then, we add two cusps to the boundary components of the nematic regions; and finally, we use two boundary layers to bridge $0$ to the values in the nematic regions. We should emphasize that the approximations will be close to the original function $u$ in $L^1$ but of course will not be close in a stronger topology as that would violate basic properties of degree. 

We now fix any $u : \Omega \to \mathbb{S}^1\cup \{0\}$ such that $|u| \in BV(\Omega; \{0,1\})$ and begin our construction of the recovery sequence. We first approximate $u$ by vector fields $u_n$, then construct a recovery sequence for any $u_n$. A standard diagonal procedure will then imply the existence of a recovery sequence for $u$. We begin by showing that there exists an intermediate sequence of vector fields $\{v_n\}:\R^2 \to \mathbb{S}^1\cup\{0\}$ such that
\begin{enumerate}
\item $\{|v_n|=1\}=:\tilde{A}_n$ has $C^2$ boundary,
\item $v_n$ is smooth restricted to $\tilde{A}_n$,
\item for each $n$, there exists a non-empty arc $I_n \subset \mathbb{S}^1$ such that $v_n(x) \notin I_n$ for all $x\in \tilde{A}_n$,
\item $\mathcal{H}^1(\partial \tilde{A}_n \cap \partial \Omega)=0$ where $\mathcal{H}^1$ denotes one-dimensional Hausdorff measure,
\item $v_n \to u$ in $L^1$, and
\item $\per_\Omega(\tilde{A}_n\cap \Omega) \to \per_\Omega(\{|u|=1\})$.
\end{enumerate}
It is standard that there exist $\tilde{A}_n$ such that $(i)$, $(iv)$, and $(vi)$ hold and $\chi_{\tilde{A}_n} \to \chi_{\{|u|=1\}}$ in $L^1$, see e.g. \cite[Theorem 1.24]{giusti}. Next, we define a sequence $\tilde{v}_n$ by
\[ \tilde{v}_n= \begin{cases} 
      u(x) & \textrm{if } x\in \tilde{A}_n \cap \{|u|=1 \}, \\
      \vec{e}_1 & \textrm{if } x\in \tilde{A}_n \setminus \{|u|=1 \},\\
	0 & \textrm{if }x\notin \tilde{A}_n.
   \end{cases}
\]
The choice of $\vec{e}_1$ is arbitrary, since any unit vector would suffice. From the convergence of $\chi_{\tilde{A}_n}$ to $\chi_{\{|u|=1\}}$ and the dominated convergence theorem, it follows that, up to a subsequence, $\tilde{v}_n \to u$ in $L^1(\Omega;\R^2)$. The sequence $\{\tilde{v}_n\}$ satisfies properties $(i)$ and $(iv)$--$(vi)$, so it remains to argue we can modify it so that $(ii)$ and $(iii)$ hold as well. For each $n$, we define for $1\leq j \leq n$
\begin{equation}\notag
C_{j}^n:=\{x \in \Omega : \tilde{v}_n(x) \in (\cos([2\pi (j-1)/n,2\pi j/n)),\sin([2\pi (j-1)/n,2\pi j/n))) \}
,\end{equation}
and observe that for some $j_n$, $|C^n_{j_n}|\leq \abs{\Omega}/n$, since $\sum_j|C_{j}^n|\leq|\Omega|$. Then for $x \in C_{j_n}^n$, we redefine $\tilde{v}_n(x)$ to be identically $(\cos(2\pi (j_n-1)/n),\sin(2\pi (j_n-1)/n))$, so that the $\tilde{v}_n$ now avoids an arc $I_n\subset\mathbb{S}^1$ of length $2\pi/n$.
Now we can mollify $\tilde{v}_n$ to obtain smooth $v_n$ which also avoid $I_n$ and satisfy $(i)$--$(vi)$. Indeed, this can be done by choosing an interval $[a_n,b_n]$ in which to define the values of the phase of $v_n$ and mollifying the phase function itself. We also point out that inside $\tilde{A}_n$, the degree of $v_n$ around any simple, closed curve is zero, since $v_n$ cannot take values in $I_n$.\par
Next, for each $n$, we add small cusps to the sets $\tilde{A}_n$ and modify $v_n$ to obtain $u_n$. For each connected component of $\partial \tilde{A}_n$, we add two cusps pointing into the isotropic region, which change the perimeter of $\tilde{A}_n$ by at most $1/n$. We denote the resulting modification of $\tilde{A}_n$ by $A_n$, and smoothly alter the values of the function $v_n$, yielding $u_n$. This procedure can be carried out in such a fashion so that properties $(ii)$--$(vi)$ above still hold for the sequence $\{u_n\}$, and property $(i)$, the smoothness of $\partial A_n$, holds except at the cusps. This completes the construction of the sequence $\{u_n\}$.\par
For each $n$, we now construct a recovery sequence $\{ u_\e\}$, suppressing the dependence of $\{ u_\e \}$ on $n$ for ease of notation. Away from $\partial A_n$, $ u _\e$ will be identically equal to $u_n$. Near $\partial A_n$, we will use a boundary layer of the form $f_\e \vec{t}$, where $\vec{t}$ is a unit vector field tangent to level sets of the signed distance function $d$ to $A_n$ and where $f_\e$ solves a certain ODE. Away from the cusps, the level sets of the $d$ are smooth, which will be enough for our purposes. For each connected component of $\partial A_n$, we define $\vec{t}$ there by choosing a unit vector field tangent to that component and continuous on all of that component; see Fig. \ref{Fig. 1} below.\par
\afterpage{
\begin{figure}[h]
\centering
\includegraphics[scale=.8]{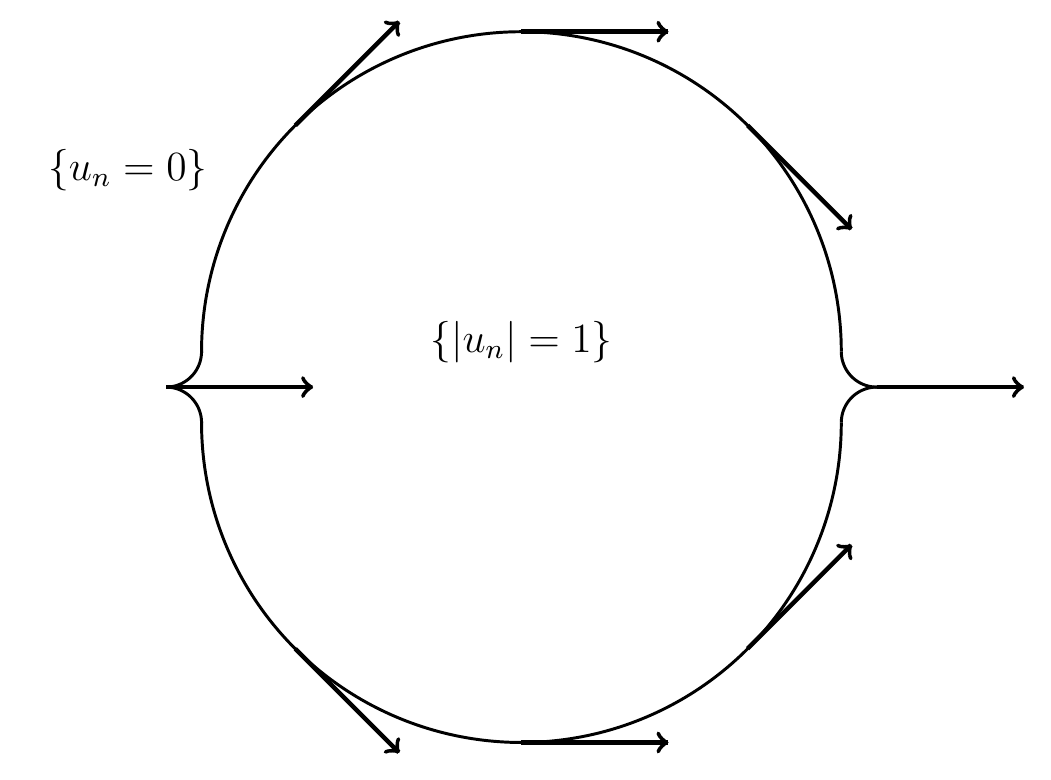}
\caption{The Lipschitz vector field $\vec{t}$ is tangent to this connected component of $\partial A_n$ and has degree zero around it.}
 \label{Fig. 1}
\end{figure}
}
The fact that each component contains two cusps implies that for the field $\vec{t}$ to be continuous, it must change the sense of tangency at every cusp. Thus on $\partial A_n$, $\vec{t}$ is always equal to $\pm \nabla^\perp d$. From these observations it follows that the degree of $\vec{t}$ around any connected component of $\partial A_n$ is zero. We then extend $\vec{t}$ to a continuous, unit vector field tangent to level sets of $d$ for $x$ such that $d(x)$ is small and positive and the nearest point projection $x$ onto $\partial A_n$ is not contained in any one of a union of rectangles near each cusp; see Fig. \ref{Fig. 2} below. To bridge the divergence free field $f_\e \vec{t}$ to the values of $u_n$ inside $A_n$, there is a second boundary layer, which is defined via an $\mathbb{S}^1$-valued homotopy between $\vec{t}$ and the values of $u_n$ inside $A_n$. This is only possible because $\vec{t}$ has degree zero around $\partial A_n$, as does $u_n$ around any simple, closed curve in $A_n$. The energy contribution from this layer in the limit will be zero, since $W(u_\e)=0$ there.\par
We now specify $u_\e$ in the first boundary layer, which contributes the perimeter term in the asymptotic limit. In the interior of $A_n$ and in $A_n^c$ at sufficient distances away from $\partial A_n$ to be specified shortly, we set $u_\e$ equal to $u_n$.

First, for some fixed $\delta >0$, we consider the following ODE, similar to \cite[Equation 3.2]{baldo}:
\begin{align*}
\left( \displaystyle\frac{\partial}{\partial s}f_\e(s)\right)^2 = \displaystyle\frac{\delta + V(f_\e(s))}{\e^2 (f_\e(s))^2}.
\end{align*}
As argued in \cite{baldo}, there exists a constant $C$, depending on $\delta$, such that for every $\e$, there exist positive numbers $C_\e$ and strictly decreasing solutions $f_\e:[0,C_\e]\to [0,1]$ of this ODE such that 
\begin{equation}\label{ceps}
C_\e \leq C\e
\end{equation}
and
\begin{equation}\notag
f_\e(0)=1 \textup{ and }f_\e(C_\e)=0
.\end{equation}Each $f_\e$ in fact depends on $\delta$, but we suppress this dependence.
Next, we excise a small rectangle at each cusp. Let 
\begin{equation}\label{meps}
m_\e:=\max\{\e,L_\e\}.
\end{equation}
For each cusp $c_i$, consider a rectangle $R_i^\e$ with one side of length $2C_\e$, centered at $c_i$, and perpendicular to the one-sided tangents at $c_i$, such that $R_i^\e$ protrudes $m_\e^{2/3}$ into the isotropic set $\{u=0\}$ in the other direction;  see Fig. \ref{Fig. 2} below.
\afterpage{
\begin{figure}[h]
\centering
\includegraphics[scale=.8]{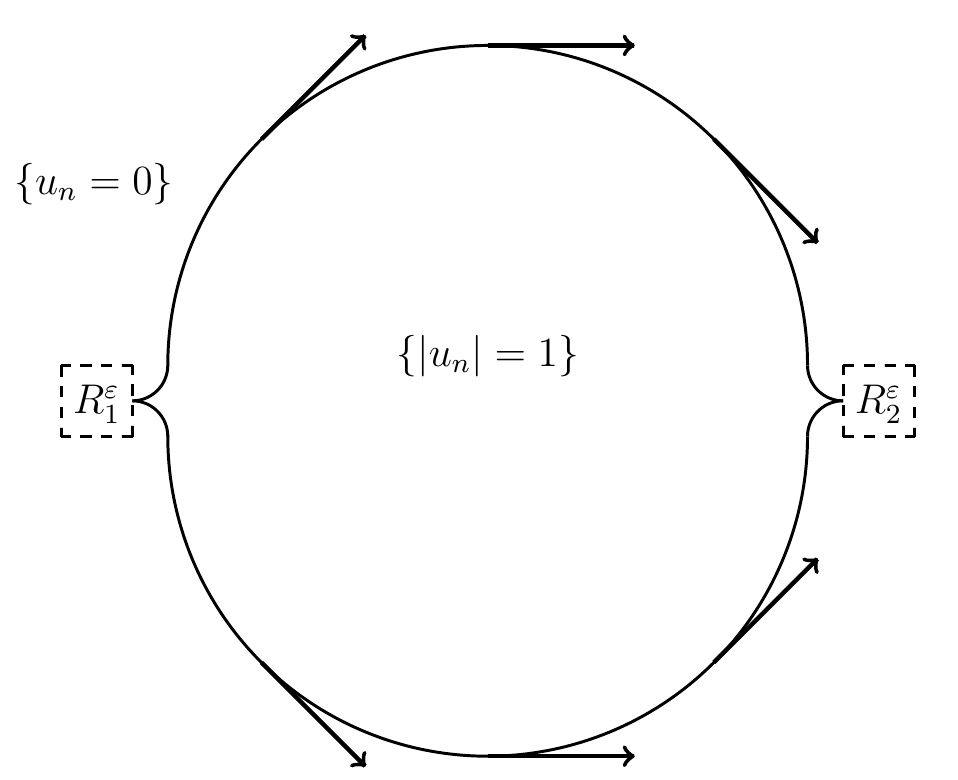}
\caption{Each rectangle $R_i^\e$ has length $m_\e^{2/3}$ and height $2C_\e$ and is perpendicular to the one-sided tangent vectors at the cusp $c_i$. For $x$ near the interface and not in $R_i^\e$, we can extend the tangent field $\vec{t}$ to be unit valued and tangent to level sets of the distance function to the interface.}
 \label{Fig. 2}
\end{figure}
} We denote by $\mathcal{R}_\e$ the union of all $R_i^\e$'s and then we define
\begin{align*}
u_\e(x) := \left\{
\begin{array}{cc}
u_n(x) & \textup{ if } C_\e \leq d(x)  \textup{ or } d(x)\leq -m_\e^{2/3}, \\
f_\e(d(x)) \vec{t}(x) & \textup{ if }0 < d(x) < C_\e \textup{ and }x\notin \mathcal{R}_\e. 
\end{array}
\right.
\end{align*}
In the definition above and in the remainder of the argument, we take $d$ to denote the signed distance function to $\partial A_n$ which is negative inside $A_n$.
We will deal with $u_\e$ on $\{ x: -m_\e^{2/3} < d(x) \leq 0\}$ and on $\mathcal{R}_\e$ at the end. It can be shown, by calculations similar to those preceding \cite[Equation 3.33]{novack1} that
\begin{equation}\label{perimeter}
\limsup_{\e \to 0}\f{1}{2}\int_{\{ 0 \leq d(x) \leq C_\e\}}\left(\f{1}{\e}W(u_\e)+  \e| \nabla u_\e|^2  + L_\e(\dive u_\e)^2 \right)\,dx \leq c_0\,\per_\Omega( A_n )+O(\sqrt{\delta}),
\end{equation}
observing in the process the crucial fact that the divergence of $u_\e$ on this set is zero, cf. \eqref{divzero}. Furthermore,
\begin{samepage}\begin{align}\label{00}\f{1}{2}\int_{\{ C_\e\leq d(x) \textup{ or } d(x) \leq -m_\e^{2/3}\}} &\left(\f{1}{\e}W(u_\e)+\e| \nabla u_\e|^2  +  L_\e(\dive u_\e)^2 \right)\,dx \\ \notag&=\f{1}{2}\int_{\{ C_\e\leq d(x) \textup{ or } d(x) \leq-m_\e^{2/3}\}} \left( \f{1}{\e}W(u_n)+ \e| \nabla u_n|^2  +L_\e(\dive u_n)^2 \right)\,dx\underset{\e \to 0}{\to}  0,\end{align}\end{samepage} since $W(u_n)=0$ and $|\nabla u_n|^2$ and $(\dive u_n)^2$ are bounded functions independent of $\e$ away from $\partial A_n$.\par
It remains to define $u_\e$ on the second boundary layer, $\{x: -m_\e^{2/3}< d(x) \leq 0 \}$, and on $\mathcal{R}_\e$. Let us first consider the second boundary layer. Because of the fact that $u_\e$ defined thus far has degree zero around $\partial A_n$ and $\{d(x) = - m_\e^{2/3} \}$, there exist Lipschitz phases $\psi_1:\partial A_n \to \R$, $\psi_2:\{d(x) = - m_\e^{2/3} \} \to\R$ such that $u_\e = (\cos(\psi_1),\sin(\psi_1))$ on $\partial A_n$ and $u_\e = (\cos(\psi_2),\sin(\psi_2))$ on $\{d(x) = - m_\e^{2/3} \}$. Then we can interpolate on the intermediate region using convex combinations of $\psi_1$ and $\psi_2$ so that $|\nabla u_\e|^2$ and $(\dive u_\e)^2$ are both $O(m_\e^{-4/3})$. Since $u_\e$ is a unit vector field here, $W(u_\e)$ is $0$. Hence we can calculate
\begin{samepage}
\begin{align}\notag
\f{1}{2}\int_{\{x: -m_\e^{2/3}< d(x) \leq 0 \}} \left(\f{1}{\e}W(u_\e) + \e |\nabla u_\e|^2 + L_\e(\dive u_\e)^2 \right)\,dx &\lesssim |\{ -m_\e^{2/3}< d(x) \leq 0\} |(\e m_\e^{-4/3}+L_\e m_\e^{-4/3}) \\ \label{twob}
& \leq O( m_\e^{1/3}).
\end{align}
\end{samepage}
So $u_\e$ on the second boundary layer contributes nothing to the asymptotic limit.\par
Finally, we treat $u_\e$ on the union $\mathcal{R}_\e$ of rectangles $R_i^\e$. It suffices to demonstrate the construction on a single $R_i^\e$ such that the cusp $c_i$ contained on one of its sides is the origin and the isotropic phase is to the right of the $x_2$-axis. Up to a translation, this is the situation depicted in Fig. \ref{Fig. 2} with $R_2^\e$. In these coordinates we may describe $R_i^\e$ as the rectangle $[0,m_\e^{2/3}]\times[-C_\e,C_\e]$. We set 
\begin{equation}\notag
u_\e(x_1,x_2) = f_\e(|x_2|)(1-m_\e^{-2/3}x_1)\vec{t}(0)
\end{equation}
on $R_i^\e$, which ensures compatibility with $u_\e$ as previously defined. We remark that $\vec{t}(0)$ is either plus or minus $\vec{e}_1$. Then on $R_i^\e$, $(\dive u_\e)^2 \sim O(m_\e^{-4/3})$, and $|\nabla u_\e|^2 \sim O(\e^{-2})$. Since the area of $R_i^\e$ is $2C_\e m_\e^{2/3}\leq2 C\e m_\e^{2/3}$ by \eqref{ceps} and \eqref{meps}, we have for small $\e$
\begin{equation}\label{o}
\f{1}{2}\int_{R_i^\e}\left( \f{1}{\e}W(u_\e)
+\e|\nabla u_\e|^2 + L_\e (\dive u_\e)^2\right)\,dx \leq O(m_\e^{2/3}).
\end{equation}
Combining \eqref{perimeter}--\eqref{o}, we obtain
\begin{align*}
F_\e(u_\e) \to F_0(u_n)+O\sqrt{\delta}. 
\end{align*}
In addition, the $u_\e$ converge in $L^1$ to $u_n$ by virtue of the dominated convergence theorem, since they are bounded and the set where they differ from $u_n$ has measure going to zero. Therefore, recalling that $\{u_\e\}$ depends on $\delta$ as well, we diagonalize over $\e$ and $\delta$ to obtain a recovery sequence for $u_n$. Since $u_n$ converge in $L^1$ to $u$ and $F_0(u_n) \to F_0(u)$, a second diagonalization argument over $n$ and $\e$ yields a recovery sequence for $u$.
\end{proof}

\section{ A model with large elastic disparity and singular phase boundaries}\label{secondtry}
In the previous section we saw that disparity in the elastic energy density of the form
\[
\e\abs{\nabla u}^2+L_\e(\dive u)^2\quad\mbox{with}\;\e\ll L_\e\to 0
\]
is insufficient to induce a singular phase boundary between the isotropic state $0$ and an $\mathbb{S}^1$-valued nematic state in minimizers of the $\Gamma$-limit. We now introduce a model with still larger disparity, and it is this model we will work with for the remainder of the article.

To this end, for a positive constant $L$ independent of $\e$ we define
\beq
E_\e(u) := \f{1}{2} \int_\Omega \left(  \f{1}{\e}W(u)+\e|\nabla u|^2 +L(\dive u)^2 \right) \,dx,\label{Lenergy}
\eeq
where $W(u)=V(|u|)$ for some continuous $V:[0,\infty)\to [0,\infty)$ that vanishes only at $0$ and $1$. As always, our prototype is the potential given by 
$W_{CSH}(u)=\abs{u}^2\left(\abs{u}^2-1\right)^2$, but in what follows
we can allow for more general potentials vanishing at $0$ and $1$, provided that for some constant $c>0$ one has the condition
\beq\label{hat}
H(t):=\min(t^2,|1- t^2|)\leq c\sqrt{V(t)} 
\eeq
for any $t \in [0,\infty)$.

In light of the divergence term in $E_\e$, it is clear that energy-bounded sequences $\{u_\e\}$ will have divergences that converge weakly in $L^2(\Omega).$  As we will discuss in Section 3.3, under the assumption \eqref{hat}, an adaptation of the compactness techniques of \cite{DKMO} allows us to also establish that a subsequence of $\{u_\e\}$ will converge strongly in $L^2(\Omega)$ to a limit taking values in $\mathbb{S}^1\cup\{0\}$. We will write $u_\e\divcon u$ when both $\dive u_\e\rightharpoonup \dive u$ weakly in $L^2(\Omega)$ and
$u_\e\to u$  strongly in $L^2(\Omega;\R^2)$. See Theorem \ref{fullcompact}.

These compactness results naturally lead us to consider the Hilbert space $H_{\dive}(\Omega;\R^2)$ of $L^2$ vector fields having $L^2$ divergence, and more specifically $H_{\dive}(\Omega;\mathbb{S}^1\cup \{0\})$, in light of the assumed zero set of the potential $W$. 

A vector field $u\in H_{\dive}(\Omega;\mathbb{S}^1\cup \{0\})$ that additionally lies in the space $BV(\Omega;\mathbb{S}^1\cup\{0\})$ is known to have a countably $1$-rectifiable jump set $J_u$ off of which $u$ is approximately continuous. In our pursuit of a possible candidate for the limit of the sequence $\{E_\e\}$ we will focus on functions lying in the intersection of these two spaces. 

Mappings in $BV$ have well-defined traces, say $u_+$ and $u_-$ on either side of $J_u$ and an easy application of the Divergence Theorem reveals that when $H_{\dive}$ vector fields have jump discontinuities across $J_u$ then necessarily the normal component is continuous, i.e.
\beq
u_+\cdot\nu=u_-\cdot \nu\quad\mathcal{H}^1\;a.e.\;\mbox{on}\;J_u\label{legaljumps}
\eeq
where $\nu$ is the (approximate) normal to $J_u$.

This brings us to a crucial distinction when attempting to identify a limiting energy for the sequence $\{E_\e\}$ --a mapping $u\in (H_{\dive}\cap BV)(\Omega;\mathbb{S}^1\cup \{0\})$ may undergo a jump between two $\mathbb{S}^1$-valued states, in which case \eqref{legaljumps} is supplemented by the additional 
requirement that
\beq
u_+\cdot\tau=-(u_-\cdot\tau)\quad\mbox{along}\;J_u,\label{tanjump}
\eeq
where $\tau$ is the approximate tangent to $J_u$. We will refer to any component of $J_u$ bridging two $\mathbb{S}^1$-valued states as a {\it wall}. On the other hand, $u$ may jump between an $\mathbb{S}^1$-valued state, say $u_+$, and $u_-=0$, in which case \eqref{legaljumps} implies that $u_+$ must coincide with $\pm\tau$. We will refer to any such component of $J_u$ as an {\it interface}. It is this tangency requirement along an interface that can induce singularities in the isotropic-nematic phase boundary.

\subsection{A conjecture for the $\Gamma$-limit of $E_\e$}\label{Conjecture}

Our goal in this section is to make the case for a proposed $\Gamma$-limit of the sequence $\{E_\e\}$ defined in \eqref{Lenergy}. While we do not at present have matching upper and lower asymptotic bounds for this sequence, we do have a construction leading to an asymptotic upper bound which we strongly suspect is sharp. We will begin with a description of this construction and then discuss various strategies for lower bounds, why the analogue of what works for the Ginzburg-Landau potential, cf. \cite{GSV}, apparently fails here and what the evidence is to support our conjecture on the sharpness of the upper bound.

We should say at the outset that our pursuit of the $\Gamma$-limit $E_0(u)$ begins with the assumption that $u\in (BV\cap H_{\dive})(\Omega;\mathbb{S}^1\cup\{0\})$. While this is not the natural space from the standpoint of compactness, the identification of the correct limiting space is non-trivial and we do not attempt to address it here. We refer the reader to \cite{ADM,DO,LO} for more discussion of this issue.  We make the $BV$ assumption here in order to speak sensibly about the $1$-rectifiability of the jump set $J_u$, though for that part of $J_u$ corresponding to interfaces, i.e. to $\partial\{\abs{u}=1\}$, as we will note below, this rectifiability comes easily from the fact that limits $u$ of energy-bounded sequences satisfy $\abs{u}\in BV(\Omega).$

As noted above, for such a vector-valued function $u$, the jump set naturally splits into two types: walls and interfaces, though these two types of singular curves may well meet in junctions, see e.g. Theorem \ref{junction} and Fig. \ref{Junctionfig}. An upper bound construction then rests on efficiently smoothing out these jump discontinuities, and in both cases, we rely on a one-dimensional type of resolution described formally below. The rigorous execution of these ideas follows the approach of \cite{CD} as adapted in \cite{GSV}.

To resolve an interface separating an isotropic region where $u=0$ from a nematic region where $u\in \mathbb{S}^1$ we invoke a by-now standard Modica-Mortola type of heteroclinic connection in the {\it modulus}. More precisely, after mollifying the interface to smoothen it if necessary, we mollify $u$ in the nematic region and make a boundary layer construction, say $\{w_\e\}$, of the form 
\beq
w_\e(x)=h\bigg(\frac{{\rm dist}\,(x,J_u)}{\e}\bigg)\,u(x)\label{ansatz}
\eeq
where ${\rm dist}\,(x,J_u)$ denotes the signed distance function to $J_u$ and where $h:\R\to\R$ minimizes the $1$d energy
\beq
\int_{-\infty}^{\infty}V(f)+\abs{f'}^2\,dt\quad\mbox{taken over}\;f\in H^1(\R)\;\mbox{such that}\;f(-\infty)=0,\;f(\infty)=1.\label{modcon}
\eeq
This leads to the same `interfacial cost' encountered in Section 2, namely
\begin{equation*}c_0=\int_0^1 \sqrt{V(s)}\,ds,\end{equation*}
multiplying the perimeter of the interface. Since
\[
\dive w_\e(x)=h\bigg(\frac{{\rm dist}\,(x,J_u)}{\e}\bigg)\,\dive u(x)+
\frac{1}{\e}h'\bigg(\frac{{\rm dist}\,(x,J_u)}{\e}\bigg)\nabla {\rm dist}\,(x,J_u)\cdot u(x),
\]
the term $L\int (\dive w_\e)^2$ in $E_\e(w_\e)$ will contribute nothing to such a boundary layer construction in the limit $\e\to 0$ since the first term is controlled by the fact that $u\in H_{\dive}$ and the second term is negligible due to the required tangency of $u$ and $J_u$ along an interface. We note that the ansatz \eqref{ansatz} would fail for the sequence of functionals $F_\e$ analyzed in Section \ref{firsttry} since there $u$ is not required to lie in $H_\dive$ and so
the term $\nabla {\rm dist}\,(x,J_u)\cdot u(x)$ will in general not vanish.

With appropriate care taken to treat issues of regularity, this can be made rigorous. What is more, this construction, based only on appropriate interpolation of the modulus between $0$ and $1$, gives a sharp upper bound on the interfacial energy, in light of the inequality
\beq
E_\e(u)\geq\left( \frac{1}{2}\int_{\Omega}\frac{1}{\e}V(\abs{u})+\e\abs{\nabla\abs{u}\,}^2\right)\,dx\quad\mbox{for any}\;u\in H^1(\Omega;\R^2).\label{MMineq}
\eeq
Since this is the classical scalar Modica-Mortola functional in terms of the function $\abs{u}$, when applied in a neighborhood of the interface it yields the matching lower bound of $c_0\,{\rm Per}_{\Omega}(\{\abs{u}=1\}).$ 

Our boundary layer construction in a neighborhood of a wall separating two $\mathbb{S}^1$-valued states, say $u_+$ and $u_-$, is one-dimensional in a different sense. In light of the continuity of the normal component of $u$ across a wall, cf. \eqref{legaljumps}, a natural  choice is to fix the value of $u\cdot\nu$ across the boundary layer and use a heteroclinic connection to bridge the value of $u_-\cdot\tau$ to $u_+\cdot\tau$, that is, to bridge
$-\sqrt{1-(u\cdot\nu)^2}$ to $\sqrt{1-(u\cdot\nu)^2}$ in light of \eqref{tanjump}. 

At a point on the wall, such a choice leads to a cost per unit length given by the minimum of a heteroclinic connection problem that is a bit different from \eqref{modcon}, namely
\begin{equation*}
\inf_{f}\int_{-\infty}^{\infty}W\big(f\tau+(u\cdot\nu)\nu\big)+
\abs{f'}^2\,dt
=\inf_{f}\int_{-\infty}^{\infty}V\bigg(\sqrt{f^2+(u\cdot\nu)^2}\bigg)+
\abs{f'}^2\,dt,
\end{equation*}
taken over $f\in H^1(\R)$ such that
\[
f(-\infty)=(u_-\cdot\tau)=-\sqrt{1-(u\cdot\nu)^2}\quad\mbox{and}\quad
f(\infty)=(u_+\cdot\tau)=\sqrt{1-(u\cdot\nu)^2}.
\]
One easily checks that this infimum is given by $K(u\cdot\nu)$ where we define
\begin{equation}
K(z):=\int_{-\sqrt{1-z^2}}^{\sqrt{1-z^2}}\sqrt{V}\left(\sqrt{z^2+y^2}\right)\,dy,
\label{1d}
\end{equation}
which in the prototypical case of $W_{CSH}(u):=\abs{u}^2(\abs{u}^2-1)^2$ takes the form
\begin{equation}
K(z)=\int_{-\sqrt{1-z^2}}^{\sqrt{1-z^2}}\sqrt{z^2+y^2}\left(1-z^2-y^2\right)\,dy.\label{CSHwallcost}
\end{equation}
We point out that $c_0=\frac{K(0)}{2}$ and also note that $K$ is not a monotone function of $z$ on $[0,1]$, but rather increases to a unique maximum and then decreases down to zero at $z=1$.

Such an upper bound construction leads us to our conjectured $\Gamma$-limit when 
$u\in (H_{\dive}\cap BV)(\Omega;\mathbb{S}^1 \cup \{0\})$, namely $E_0$ given by
\boxedeq{E0def}{E_0(u)=\f{L}{2} \int_\Omega (\dive u)^2 \,d x+ \frac{K(0)}{2} \mathrm{Per}_\Omega (\{|u| = 1\}) + \int_{J_u \cap \{ |u| = 1\}} K(u \cdot \nu) \,d \sh^1.}
 \par
One should also impose upon competitors in the minimization of $E_0$ a boundary condition of the form \eqref{diri} if one imposes the Dirichlet condition $u_{|_{\partial\Omega}}=g$ for $E_\e$ or an area constraint on the measure of the isotropic or nematic region within $\Omega$ if an integral constraint has been imposed on $E_\e$.

In particular, we can rigorously assert:

\bthm\label{Lupperbound}
For any $u \in  (H_{\mathrm{div}}\cap BV)(\Omega,\mathbb{S}^1 \cup \{0\})$,
there exists a sequence $\{w_\e\} \in H^1(\Omega;\R^2)$ with $w_\e \divcon u$ such that 
\begin{align} \label{ubndnew}
\limsup_{\e \to 0} E_\e(w_\e) = E_0(u).
\end{align}
\ethm
\vskip.1in
Furthermore, we state a conjecture:\\

\noindent\textbf{Conjecture :} Suppose $W$ satisfies \eqref{hat}. Then for any $u \in  ( H_{\mathrm{div}}\cap BV)(\Omega, \mathbb{S}^1 \cup \{0\})$ and any sequence $u_\e \divcon u$ we have 
\beq\liminf_{\e \to 0} E_\e(u_\e) \geqslant E_0(u). \label{conjecture}
\eeq

\begin{proof}
The proof of \eqref{ubndnew} is similar to the proof of \cite[Theorem 3.2(ii)]{GSV}, which itself is an adaptation of the techniques laid out in \cite{CD} for Aviles-Giga recovery sequences, so we omit the details. The only difference between the arugment here and the argument in \cite{GSV} is that, as discussed above, in addition to walls, there are also interfaces now in which $u$ jumps from a tangent $\mathbb{S}^1$-valued state to $0$. However, this does not pose a serious obstacle to the construction, as the important technical components are the rectifiability of the jump set $J_u$ and the condition \eqref{legaljumps} satisfied along $J_u$ at either a wall or interface, which goes to guarantee that the boundary layer constructions do not contribute asymptotically to the $L^2$-norm of the divergence.
\end{proof}
\brk
We have not addressed in \eqref{E0def} or in Theorem \ref{Lupperbound} the issue of boundary conditions, so we describe now how to incorporate them. Suppose one were to fix Dirichlet data $g_\e\in H^{1/2}(\partial \Omega; \R^2)$ for admissible functions in $E_\e$. The functions $g_\e$ could be $\mathbb{S}^1$-valued, or could transition smoothly between $\mathbb{S}^1$ and $\{0\}$ if we are trying to induce a phase transition. Let us assume that $g_\e \to g$ in $L^2(\partial \Omega;\R^2)$ for some $g:\partial \Omega \to \mathbb{S}^1\cap \{0\}$. We observe that for a sequence $\{u_\e \}\in H^1(\Omega;\R^2)$ satisfying $u\e=g\e$ on $\partial\Omega$ and so in particular $u_\e \cdot \nu_\Omega=g\cdot \nu_\Omega$, under the convergence $u_\e \divcon u$ with $u\in (BV \cap H_{\dive}(\Omega; \mathbb{S}^1\cap \{0\})$, it follows from the divergence theorem and the convergence of $g_\e$ to $g$ that
\begin{align*}
u\cdot \nu_\Omega = g \cdot \nu_\Omega.    
\end{align*}
In this case, the limiting energy $E_0$ would also contain integrals around the portion of $\partial \Omega$ where $u\cdot \tau_\Omega \neq g \cdot \tau_\Omega$, and the cost along these portions would either be given by $K(0)/2$ or $K(u\cdot \nu_\Omega)$.
\erk
\brk
An a priori sharper upper bound for the wall cost $K$ could be obtained for these energies using the techniques of \cite{polupper1}. There, the author obtains an upper bound without assuming that the optimal profile is one-dimensional. Instead, the cost is defined via a cell problem. As the class of admissible functions for the cell problem is strictly larger than the class of $1$d competitors, the cell problem yields what could in theory be a sharper upper bound. However, since we conjecture that the one-dimensional profile is optimal and since at present we see no way to analyze the abstract cell problem to make this comparison, we do not pursue the strategy from \cite{polupper1}.
\erk

Given the presence of arguments leading to matching lower bounds for one-dimensional constructions in the Aviles-Giga problem \cite{AG} and for the energy $E_0$ with the potential replaced by a Ginzburg-Landau potential $W_{GL}(v):=(1-|v|^2)^2$, in  \cite{GSV}, it behooves us to comment on why, at present, we have no such argument here. In \cite{AG} and in \cite{GSV}, the authors employ the celebrated Jin-Kohn entropy \cite{JinKohn}. Defining 
\begin{equation}
\Xi(v_1,v_2)=2 \left(\f{1}{3}v_2^3+v_2v_1^2-v_2,\f{1}{3}v_1^3+v_1v_2^2-v_1 \right),\label{jinkohn}
\end{equation}
the version of these entropies well-suited to the situation where the jump set is parallel to one of the coordinate axes, one can then calculate
\begin{align}\label{divxi}
\dive \Xi(v_1,v_2) = 2(|v|^2-1)(\partial_{x_1} v_2 + \partial_{x_2} v_1) + 4 v_1 v_2 \dive v.
\end{align}
In the divergence-free Aviles-Giga setting of \cite{JinKohn}, the last term drops out and an application of the inequality $a^2+b^2 \geq 2ab$ allows one to bound the Aviles-Giga energy density from below by $\dive \Xi(v_1,v_2)$. When the divergence is possibly non-zero, as in \cite{GSV}, a slight modification yields 
\begin{equation*}
\dive \Xi(v_1,v_2) \leq \left(\e|\nabla v|^2+ \f{1}{\e}(|v|^2-1)^2+L(\dive v)^2 \right)+ \textup{ error terms}
,\end{equation*}
which is the crux of the argument.\par
Unfortunately, for most radial potentials that are not the Ginzburg-Landau potential $W_{GL}$, this technique does not seem to work. First, we note that 
\begin{align*}
\Xi(v_1,v_2) &= \left(\int_{-v_2}^{v_2} (v_1^2+s^2-1) \,ds, \int_{-v_1}^{v_1} (s^2+v_2^2-1) \,ds\right) ,
\end{align*}
where the integrands are, up to signs, given by $\sqrt{W_{GL}}$. Therefore, to obtain a version of \eqref{divxi} with $W_{GL}$ replaced by $\sqrt{W}$, where $W$ is our potential vanishing on $\mathbb{S}^1\cup\{0\}$, the natural choice for the vector field to replace $\Xi$ would be
\begin{equation*}
\Xi_{W}(v_1,v_2)= \left(\int_{-v_2}^{v_2} \sqrt{W(v_1,s)} \,ds, \int_{-v_1}^{v_1} \sqrt{W(s,v_2)} \,ds\right).
\end{equation*}
When we calculate the divergence of $\Xi_W(v_1,v_2)$, we get
\begin{align*}
\dive &\Xi(v_1,v_2)\\ &= 2\sqrt{W(v)}(\partial_{x_1} v_2 + \partial_{x_2} v_1) +\partial_{x_1} v_1 \int_{-v_2}^{v_2}(\partial_{v_1}\sqrt{W(v_1,s)})\,ds +\partial_{x_2} v_2 \int_{-v_1}^{v_1}(\partial_{v_2}\sqrt{W(v_2,s)})\,ds .
\end{align*}
The only way for $\dive v$ to factor out of the last two terms is if 
\begin{equation*}
\int_{-v_2}^{v_2}\partial_{v_1}\sqrt{W(v_1,s)}\,ds = \int_{-v_1}^{v_1}\partial_{v_2}\sqrt{W(v_2,s)}\,ds,
\end{equation*}
which holds for radial $W$ when $\sqrt{W}$ is linear in $|v|^2$. This cannot hold for any $W$ that vanishes only at $\mathbb{S}^1$ and $\{0\}$.\par

We point out that a related problem that has resisted resolution for several decades is the determination of
a sharp lower bound for the sequence of energies
\beq
\int_{\Omega} \frac{1}{\e}(\abs{u}^2-1)^p+\e\abs{\nabla u}^2\quad\mbox{with}\;p<2\label{penergy}
\eeq
where competitors $u:\Omega\to\R^2$ must be divergence-free. Here too it is conjectured that the 
optimal lower bound for the wall cost is based on a one-dimensional ansatz,  \cite{ADM}, but a proof has not been found, and in particular, no version of the Jin-Kohn entropy is evident. An abstract lower bound involving a cell problem for functionals of this type has been derived in \cite{pollower}, but has not yet to our knowledge been matched by a corresponding upper bound. The strategy in this and other papers involving a lower bound phrased in terms of a cell problem is based on a blow up procedure introduced in \cite{FM}. Such a lower bound of the form $\int_{J_u}\tilde{K}(u\cdot\nu)\,d\mathcal{H}^1$ for some $\tilde{K}:[0,\infty)\to[0,\infty)$ defined as the solution to a cell problem could be derived for our wall energy as well, but we do not include the argument since it does not provide much insight here.

On the other hand, for $p>2$ in \eqref{penergy}, as shown in \cite{ADM}, the one-dimensional ansatz is {\it not} optimal, with an oscillatory construction, often referred to as `microstructure,' whose modulus hews close to $\mathbb{S}^1$, yielding a lower asymptotic energy.   

So what is the rationale behind our conjecture \eqref{conjecture}? One key point is that for $W$ given by $W_{CSH}$ or more generally by a potential satisfying \eqref{hat}, the level of degeneracy of the $\mathbb{S}^1$ potential well is no flatter than that of $W_{GL}$ where again it is known that walls follow a one-dimensional profile asymptotically. Thus, it seems unlikely that microstructure of the type emerging, for example, in \eqref{penergy} for $p>2$ would appear here since for our model it is no more beneficial energetically to abandon one-dimensionality in order to be nearer to $\mathbb{S}^1$ across a wall than it was in \eqref{divBBH}. 

Other evidence for our conjecture is numerical. Repeated numerical experiments in the form of gradient flow for $E_\e$ with $\e$ small in a variety of domains, for a variety of boundary conditions and for a wide range of $L$ values have not indicated any lack of one-dimensionality in the wall structure. Were the transition to be truly $2d$, one might expect the wall to exhibit some oscillation or other instability.
For example,
 in \cite{GSV} while we prove that for \eqref{divBBH}-\eqref{limdivBBH} the wall cost is based on a one-dimensional construction, we also find that when minimizing \eqref{divBBH} in a rectangle with $\mathbb{S}^1$-valued Dirichlet data given by $(\pm a,\sqrt{1-a^2})$ for
$a\in [0,1)$ on the top and bottom respectively and periodic boundary conditions on the sides, there exists a parameter regime in $L$ and in the box dimensions where the minimizer is not one-dimensional, cf. \cite{GSV}, Thm. 6.6. Indeed this theorem is supported by numerics revealing the eventual instability of a horizontal wall and the emergence of so-called `cross-ties' commonly arising in studies of micromagnetics such as \cite{AlougesRiviereSerfaty}. On the other hand, as we discuss in Section \ref{1dex}, numerically we detect no such instability of a horizontal wall for $E_\e$
under these boundary conditions. Then a numerical examination in Section \ref{diskex} of wall structure for a version of our problem posed in a disk also indicates a one-dimensional heteroclinic connection for the wall structure. This gives us further confidence in the conjectured one-dimensionality of the wall cost.

\subsection{Compactness}
In this section we establish a compactness result for energy-bounded sequences.
Recalling the assumption \eqref{hat}, we begin by observing that
\beq 
E_\e(u) \geq \f{1}{2}\int_\Omega  \left( \frac{1}{\e c^2} H^2(|u|)+\e|\nabla u|^2 + L(\dive u)^2  \right) \,d x.
\eeq 
Both the Ginzburg-Landau and the Chern-Simons-Higgs potentials satisfy this inequality and in \cite{GSV} it is shown that for $W$ given by the Ginzburg-Landau potential, the compactness result of \cite{DKMO} generalizes to $E_\e$. In this section, we show that this compactness approach generalizes to potentials also vanishing at the origin provided we assume \eqref{hat}. 
\bthm\label{fullcompact}
Let $\{u_\e\}_{\e > 0} \subset H^1(\Omega;\R^2)$ be a sequence such $E_\e(u_\e) \leqslant C$, with $C$ independent of $\e$.  Then there exists a subsequence (still denoted here by $u_\e$) and a function $u\in H_{\dive}(\Omega;\mathbb{S}^1 \cup \{0\})$ with $\abs{u}\in BV(\Omega;\{0,1\})$ such that
\begin{eqnarray}
&&u_\e\rightharpoonup u\quad\mbox{in}\;\hdiv, \label{weakdiv}\\
&& u_\e\to u\quad\mbox{in}\;L^2(\Omega;\R^2).\label{strongL2}
\end{eqnarray}
\ethm
The fact that for a subsequence of $\{u_\e\}$, one has $\abs{u_\e}\to \abs{u}$ in $L^1(\Omega)$ where $\abs{u}\in BV(\Omega;\{0,1\})$ follows
from inequality \eqref{MMineq} via the standard Modica-Mortola approach, cf. \cite{ModicaARMA} or \cite{Sternberg86}.
 The proof of \eqref{weakdiv} follows immediately from the uniform bound on the $L^2$ norm of the divergences, so we turn to the proof of \eqref{strongL2}. The proof follows closely the proof in \cite[Proposition 1.2]{DKMO}, with the details suitably modified to account for the fact that the potential may now possibly vanish at $0$ in addition to $\mathbb{S}^1$. Below we outline the procedure and indicate which portions require changes from \cite{DKMO}.\par
The proof relies on compensated compactness and a careful analysis of the Young measure $\{\mu_x\}_{x\in \Omega}$ generated by the sequence $\{u_\e\}$. One of the key tools in this analysis is the concept of an entropy, defined here as a mapping $\Phi\in C_0^{\infty}(\R^2;\R^2)$ such that 
\[
\Phi(0)=0,\quad D\Phi(0)=0\quad\mbox{and for all}\;z\in\R^2\;\mbox{one has}\;z\cdot D\Phi(z)\,z^\perp=0,
\]
where $z^\perp=(-z_2,z_1)$, cf. \cite[Definition 2.1]{DKMO}. A crucial property of any such entropy is that $\Phi$ satisfies a certain equation relating $\nabla \cdot [ \Phi(u)]$ and $\nabla(1-|u|^2)$ for any $u \in H^1(\Omega;\R^2)$. We state this equation precisely in \eqref{3.2}, and refer the reader to \cite[Lemmas 2.2, 2.3]{DKMO} for the proof, which is a straightforward calculation. In Lemma \ref{L2.2}, we prove that the class of entropies is large enough for our purposes. Next, in Proposition \ref{P1.2}, we prove the requisite compactness for the sequence $\{u_\e\}$. We achieve this by first adapting the proof of \cite[Proposition 1.2]{DKMO} using the aforementioned equation \eqref{3.2} to show that for any entropy $\Phi$, 
\begin{align}\notag
\left\{ \nabla \cdot [\Phi(u_\e)]\right\} \textup{ is compact in }H^{-1}(\Omega).
\end{align}
This compactness then allows us to use the div-curl lemma of Murat and Tartar \cite{murat2,tartar} and the result of Lemma \ref{L2.2} to conclude that each $\mu_x$ is a Dirac measure. One can then quickly deduce, in the same fashion as in \cite[page 843]{DKMO}, that the sequence $\{u_\e\}$ is precompact is $L^2$. We begin the proof with 
\begin{lemma}{(cf. \cite[Lemma 2.2]{DKMO})}\label{L2.2}
Let $\mu$ be a probability measure on $\R^2$ supported on $\mathbb{S}^1 \cup \{0\}$. Suppose it has the property
\begin{align}\label{el2.2}
\int \Phi \cdot \tilde{\Phi}^\perp \,d\mu =  \int \Phi\,d\mu \cdot \int  \tilde{\Phi}^\perp \,d\mu \textit{ for all entropies }\Phi, \tilde{\Phi}.
\end{align}
Then $\mu$ is a Dirac measure.
\end{lemma}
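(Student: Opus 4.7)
The plan is to isolate the possible atom of $\mu$ at the origin via a pair of explicitly chosen ``radial'' test entropies, force its mass to be $0$ or $1$, and then invoke the DKMO Dirac conclusion on the $\mathbb{S}^1$-supported part. First set $\alpha := \mu(\{0\})$ and decompose $\mu = \alpha \delta_0 + \mu_1$, where $\mu_1$ is a positive measure supported on $\mathbb{S}^1$ of total mass $1-\alpha$.

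Next I would construct two simple entropies whose integrals against $\mu$ can be computed by hand. Pick $h \in C_c^\infty(\R)$ with $h(0)=0$ and $h(1)=1$, and set $\Phi(u):=h(|u|^2)\,e_1$ and $\tilde\Phi(u):=h(|u|^2)\,e_2$. Both maps lie in $C_0^\infty(\R^2;\R^2)$ and satisfy $\Phi(0)=0$, $D\Phi(0)=0$ (and similarly for $\tilde\Phi$), while a direct differentiation shows $z \cdot D\Phi(z)\,z^\perp \equiv 0$, so both qualify as entropies per the paper's definition. Moreover, on $\mathbb{S}^1$ one has $\Phi\equiv e_1$ and $\tilde\Phi^\perp \equiv -e_1$, while both maps vanish at the origin.

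Using these facts, a direct computation gives
\begin{equation*}
\int \Phi\,d\mu = (1-\alpha)\,e_1, \qquad \int \tilde\Phi^\perp\,d\mu = -(1-\alpha)\,e_1, \qquad \int \Phi \cdot \tilde\Phi^\perp\,d\mu = -(1-\alpha).
\end{equation*}
Substituting into \eqref{el2.2} yields $-(1-\alpha) = -(1-\alpha)^2$, which forces $\alpha \in \{0,1\}$. If $\alpha = 1$, then $\mu = \delta_0$ and we are done. If $\alpha = 0$, then $\mu$ is a probability measure supported entirely on $\mathbb{S}^1$, satisfying exactly the hypothesis of \cite[Lemma 2.2]{DKMO}, whose conclusion is that $\mu$ is a Dirac mass.

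The principal obstacle is locating the right test pair that rules out the intermediate case $0<\alpha<1$; once these radial entropies are in hand, the rest is either a short arithmetic identity or a direct appeal to \cite{DKMO}, and no re-examination of the portion of the DKMO proof internal to $\mathbb{S}^1$ is required. An alternative would be to reopen the DKMO proof and check that every step survives under support on $\mathbb{S}^1 \cup \{0\}$ using only $\Phi(0)=D\Phi(0)=0$, but the decomposition strategy above is considerably cleaner.
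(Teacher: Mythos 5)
Your proof is correct, and it takes a genuinely different route from the paper's. The paper re-runs the DKMO argument on $\mathbb{S}^1 \cup \{0\}$ using the half-space generalized entropies $\Phi(z) = |z|^2 e\,\chi_{\{z\cdot e > 0\}}$, which vanish at the origin; this makes the possible atom at $0$ invisible to the integrals, yields a multiplicative condition on $\mu$ restricted to half-circles, and then extracts the Dirac conclusion by a dichotomy on $\mu(\{0\})$ at the end. You instead test \eqref{el2.2} against a single pair of radial entropies $\Phi = h(|z|^2)e_1$, $\tilde\Phi = h(|z|^2)e_2$ chosen to separate $\{0\}$ from $\mathbb{S}^1$, reducing the atom question to the scalar identity $(1-\alpha) = (1-\alpha)^2$, and then invoke DKMO Lemma 2.2 as a black box for the pure-$\mathbb{S}^1$ case $\alpha=0$. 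Your check that these qualify as entropies is sound: for $\Phi = (h(|z|^2),0)$ a direct computation gives $D\Phi(z)z^\perp \equiv 0$, and $h(0)=0$ gives $\Phi(0)=0$ and $D\Phi(0)=0$; also $\mu(\{0\})=0$ combined with $\supp\mu\subset\mathbb{S}^1\cup\{0\}$ does force $\supp\mu\subset\mathbb{S}^1$, so the DKMO hypothesis is met. The paper's route is self-contained and stays close to the original DKMO proof; yours is shorter and strictly modular, never re-opening the $\mathbb{S}^1$ portion of the argument. It is also worth noting that the remark following the lemma states the paper's proof does not generalize to a zero set consisting of two circles of nonzero radii, precisely because the generalized entropies can only be arranged to vanish on the $\{0\}$ component; your radial $h$, by contrast, can be chosen to vanish on either circle, so the isolating step would still force all the mass onto one circle---though one would then need the DKMO lemma for a circle of general radius, which follows by rescaling.
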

\brk We point out that the proof of this lemma does not generalize to the case where the potential vanishes on a pair of circles that both have non-zero radius. As a consequence, this proof of Theorem \ref{fullcompact} does not generalize to such situations. 
\erk
\begin{proof}
We begin by recalling the definition of ``generalized entropy" from \cite[Lemma 2.5]{DKMO}. These are functions $\Phi$ defined by
\begin{align}\notag
\Phi(z)&= \left\{ \begin{array}{cc}
|z|^2e & \mbox{ for }z\cdot e >0 \\
0 & \mbox{ for } z\cdot e \leq 0
\end{array}
\right.
\end{align}
for any fixed $e\in \mathbb{S}^1$.
Any such $\Phi$ can be approximated closely enough by entropies $\Phi_n$ such that \eqref{el2.2} holds for $\Phi$ as well. Using the fact that these generalized entropies vanish at the origin, we have
\begin{align*}
\int_{\mathbb{S}^1} \Phi \cdot \tilde{\Phi}^\perp \,d\mu =  \int_{\mathbb{S}^1} \Phi\,d\mu \cdot \int_{\mathbb{S}^1}  \tilde{\Phi}^\perp \,d\mu .
\end{align*}
We rewrite this as
\begin{align*}
e \cdot \tilde{e}^\perp\,\mu(\{z\cdot e >0\}\cap \{ z\cdot \tilde{e}>0\}\cap \mathbb{S}^1)=e \cdot \tilde{e}^\perp\mu(\{z\cdot e >0\}\cap \mathbb{S}^1)\,\mu ( \{ z\cdot \tilde{e}>0\}\cap \mathbb{S}^1)\textup{  for all }e,\tilde{e}\in \mathbb{S}^1
\end{align*}
or
\begin{align*}
\mu(\{z\cdot e >0\}\cap \{ z\cdot \tilde{e}>0\}\cap \mathbb{S}^1)=\mu(\{z\cdot e >0\}\cap \mathbb{S}^1)\,&\mu ( \{ z\cdot \tilde{e}>0\}\cap \mathbb{S}^1)\\ &\textup{ for all }\tilde{e} \in \mathbb{S}^1\setminus\{e,-e\} \textup{ and all }e\in \mathbb{S}^1.\end{align*}
Letting $\tilde{e}$ approach $e$, we obtain
\begin{align*}
\mu(\{z\cdot e >0 \}\cap \mathbb{S}^1)\leq \mu(\{z\cdot e >0\}\cap \mathbb{S}^1)\mu(\{z\cdot e \geq 0\}\cap \mathbb{S}^1)\textup{ for all }e \in \mathbb{S}^1
\end{align*}
or
\begin{align}\label{option}
\mu(\{z\cdot e >0\}\cap \mathbb{S}^1)=0 \textup{  or  }\mu(\{z\cdot e \geq 0\}\cap \mathbb{S}^1)\geq 1 \textup{ for all }e\in \mathbb{S}^1.
\end{align}
If $\mu(\{0\})>0$ then it cannot be that $\mu(\{z\cdot e \geq 0\}\cap \mathbb{S}^1)\geq 1$ for any $e \in \mathbb{S}^1$. In this case $\mu(\{z\cdot e >0\}\cap \mathbb{S}^1)=0$ for all $\mathbb{S}^1$-valued $e$, and $\mu$ is clearly a Dirac measure concentrated at zero. So we may assume that $\mu(\{0\})=0$, implying that $\mu$ is a probability measure on $\mathbb{S}^1$. In this case, we deduce from \eqref{option} that
\begin{align*}
\supp \mu \subset \{ z\cdot e \leq 0\}\cap \mathbb{S}^1\textup{  or  }\supp \mu \subset \{z\cdot e \geq 0\}\cap \mathbb{S}^1\textup{ for all }e\in \mathbb{S}^1.
\end{align*}
As $\mu$ is a probability measure on $\mathbb{S}^1$, this implies that $\mu$ is concentrated on a single point.
\end{proof}
We can now prove the main result.
\bprop{(cf. \cite[Proposition 1.2]{DKMO})}\label{P1.2}
Let $\Omega \subset \R^2$ be open and bounded. Let $\{ u_\e\}\subset H^1(\Omega;\R^2)$ be such that
\begin{align}\label{1.4}
\nabla \cdot u_\e \textup{ are uniformly bounded in }L^2,
\end{align}
\begin{align}\label{1.5}
\| H(|u_\e|)\|_{L^2(\Omega)} \xrightarrow[\e \to 0]{} 0,
\end{align}
and
\begin{align}\label{1.6}
\|\nabla u_\e \|_{L^2}\|H(|u_\e|) \|_{L^2} \textup{ are uniformly bounded.}
\end{align}
Then
\begin{align}\label{1.7}
\{u_\e \} \subset L^2(\Omega;\R^2) \textup{ is relatively compact.}
\end{align}
\eprop
\begin{proof}
First, we modify our sequence slightly for convenience. By choosing real numbers $r_\e$ close enough to $1$ and considering the sequence $\{r_\e u_\e \}$, we can without loss of generality assume that for each $\e$,
\begin{align}\label{zeromeas}
\abs{\bigg\{x \in \Omega:|r_\e u_\e(x)|=\frac{1}{\sqrt{2}} \bigg\}}=0.
\end{align}
In addition, we can choose $r_\e$ so that $\{r_\e u_\e\}$ has uniformly bounded energies and is precompact in $L^2(\Omega;\R^2)$ if and only if $\{u_\e \}$ is as well. Henceforth we refer to the modified sequence as simply $\{u_\e\}$ and assume that these conditions hold for $u_\e$. \par

We aim to show for any entropy $\Phi$ that
\begin{align}\label{3.1}
\left\{ \nabla \cdot [\Phi(u_\e)]\right\} \textup{ is compact in }H^{-1}(\Omega).
\end{align}
Utilizing \eqref{1.4}, \eqref{zeromeas} and \cite[Lemmas 2.2, 2.3]{DKMO}, we see that there exists $\Psi \in C_0^\infty(\R^2)^2$ and $\alpha \in C_0^\infty(\R^2)$ such that at a.e. point in $\Omega$ one has
\begin{eqnarray*}
-\alpha(u_\e)\dive u_\e + \nabla \cdot [\Phi(u_\e)]&&=  
\Psi(u_\e)\cdot \nabla(1-|u_\e|^2) =
-\Psi(u_\e)\cdot \nabla(|u_\e|^2) \\
&&= \sgn\left(|u_\e|-1/\sqrt{2}\right)\Psi(u_\e)\cdot \nabla H(|u_\e|).\label{3.2}
\end{eqnarray*}
Before proceeding, we let $s:\R \to \R$ denote a smooth, increasing, bounded function with bounded derivative such that $s(z) \equiv -1$ for $z \leqslant \frac{1}{2\sqrt{2}}$ and $s(z) \equiv 1$ for $z \geqslant \frac{1}{2\sqrt{2}}$. We will utilize the sequence $s(|u_\e|)$, and we remark that $\frac{1}{2\sqrt{2}}$ could readily be replaced by any number less than $\frac{1}{\sqrt{2}}$. Replacing $\sgn(|u_\e| - 1/\sqrt{2})$ by $s(\abs{u_\e})$ as such allows us to maintain $L^1$ control on $\nabla s(\abs{u_\e})$ as opposed to having to analyze the distributional gradient of a sgn function, as will be necessary in a step at the end of the proof. Continuing from \eqref{3.2}, we find that 
\begin{align} \label{3.4a}
-\alpha(u_\e) \dive u_\e + \nabla \cdot [\Phi(u_\e)] = s(|u_\e|) \Psi(u_\e) \cdot \nabla H(|u_\e|) + R_\e, \\ \label{3.4b}
R_\e := \left(\sgn\left( |u_\e| - 1/\sqrt{2}\right) - s(|u_\e|)\right) \Psi(u_\e) \cdot \nabla H(|u_\e|).
\end{align}
We claim the remainder terms $R_\e$ are bounded uniformly in $L^1$. Noticing that $s(|u_\e|)=\sgn(|u_\e-1/\sqrt{2}|)$ if $\left||u_\e|-1/\sqrt{2}\right|\geq \frac{1}{2\sqrt{2}}$, we have
\begin{align*}
\int_\Omega |R_\e| &\leq C\int_\Omega \left|\chi_{\left\{\left||u_\e|-\frac{1}{2\sqrt{2}}\right|<\frac{1}{2\sqrt{2}}\right\}}\right| \left|\nabla |u_\e|\right|\,dx. \end{align*}
Continuing now using Holder's inequality and the bound $\int_\Omega W(u_\e) \leq C\e$, we have
\begin{align*}
\int_\Omega |R_\e|&\leq C\norm{\nabla |u_\e|}_{L^2}\cdot \mbox{meas}\left\{\abs{\abs{u_\e} - \frac{1}{\sqrt{2}}} < \frac{1}{2\sqrt{2}} \right\}^{1/2}\\
&\leqslant C\frac{1}{\sqrt{\e}}\sqrt{\e}\\
&\leq C.
\end{align*}\par
To prove \eqref{3.1}, we will prove that the sequence \begin{align}\label{minus}
\left\{\nabla \cdot \left[\Phi(u_\e)-s(|u_\e|) H(|u_\e|)\Psi(u_\e)\right] \right\}\textup{ is compact in }H^{-1}(\Omega).\end{align}
Since the energy bound implies that $s(|u_\e|) H(|u_\e|)\Psi(u_\e)$ converges to $0$ in $L^2$, the divergence of this sequence converges to $0$ in $H^{-1}$. Thus \eqref{minus} implies \eqref{3.1}. Thanks to \eqref{3.4a}, we have that 
\begin{align*}
\nabla \cdot [\Phi(u_\e) - s H(|u_\e|) \Psi(u_\e)] &= \nabla \cdot [\Phi (u_\e)] - \nabla \cdot [s \Psi(u_\e)] H(|u_\e|) - s \Psi(u_\e) \cdot \nabla [H(|u_\e|)] \\
&= R_\e + \alpha(u_\e) \dive u_\e - \nabla \cdot [s \Psi(u_\e)] H(|u_\e|)
\end{align*}\par
We will show the desired compactness by appealing to a lemma of \cite{Murat}, cf. \cite[Lemma 3.1]{DKMO}. This entails verifying the following two claims: \\
(1) The sequence $\big\{\nabla \cdot [\Phi(u_\e) - s H(|u_\e|) \Psi(u_\e)]\big\}$ is uniformly bounded in $L^1(\Omega).$ \\
(2) The sequence $\big\{|\Phi(u_\e) - s(|u_\e|) H(|u_\e|) \Psi(u_\e) |^2\big\}_{\e > 0}$ is uniformly integrable.\\
\underline{Proof of (1):} We have shown that the $R_\e$ are uniformly bounded in $L^1,$ and the boundedness of the function $\alpha$ along with the $L^2 $ bound on $\dive u_\e$ yield that $\alpha(u_\e)\dive u_\e$ is uniformly bounded in $L^1.$ It remains to show that the last term, namely $\nabla \cdot [s(|u_\e|) \Psi(u_\e)] H(|u_\e|)$, is bounded in $L^1.$ We have
\begin{align*}
H(|u_\e|)\nabla \cdot [s \Psi(u_\e)] = H(|u_\e|) \left( s^\prime \left( |u_\e| \right) \nabla |u_\e| \cdot \Psi(u_\e) + s(|u_\e|) \dive \Psi(u_\e)\right).
\end{align*}
The desired $L^1$ bound follows from Cauchy-Schwarz along with the energy bound. \\ 
\underline{Proof of (2):} This is clear from the fact that $\Phi$, $s$, $H$, and $\Psi$ are bounded functions.\par
We have now proved that $\left\{ \nabla \cdot [\Phi(u_\e)]\right\}$ is compact in $H^{-1}(\Omega)$.  The rest of the proof follows as in the second step of \cite[Propositon 1.2]{DKMO}.
\end{proof}

\subsection{The $\Gamma$-limit of $E_\e$ among 1d competitors}\label{oned}
In this section we analyze $\Gamma$-convergence of $E_\e$
where competitors $u_\e=(u_\e^{(1)},u_\e^{(2)})$ are defined on an interval $[-H,H]$ for some $H>0$ and are required to satisfy $\mathbb{S}^1$-valued boundary conditions of the form
\beq
u(\pm H)=(\pm \sqrt{1-a^2},a)\quad\mbox{for some}\;a\in [0,1).\label{1dbc}
\eeq
Under the one-dimensional assumption, $E_\e$ takes the form
\begin{align}
    E_\e^{1D}(u) := \left\{ 
    \begin{array}{cc}
        \displaystyle\frac{1}{2}\displaystyle\int_{-H}^H \left(\frac{1}{\e} W(u)+\e|u^\prime|^2+ L (u^{(2)}\,')^2 \right)  \,d{x_2}  &\mbox{if}\; u \in H^1\big( (-H,H); \R^2\big),  \\\\
        +\infty  &  \mbox{ otherwise }.
    \end{array}
    \right.
\end{align}

In a manner similar to \cite{GSV}, Section 6, within this one-dimensional ansatz we can obtain a sharp compactness theorem for energy bounded sequences, a complete $\Gamma-$convergence result of the $E_\e$ functionals and a complete characterization of minimizers of the $\Gamma$-limit. Since the proofs of the results in this section are completely analogous to those in \cite[Section 6]{GSV}, we only sketch the arguments highlighting differences. 

In Section \ref{1dex}, we present results of numerical simulations obtained via gradient flow for $E_\e$ with $\e > 0$ for the two-dimensional problem in a rectangle $\Omega = (-1/2, 1/2) \times (-H,H)$, subject to the boundary conditions \eqref{1dbc} on the top and bottom and periodic boundary conditions on the left and right sides. These computations suggest convergence in large time to configurations that resemble the one-dimensional minimizers of this section, lending further evidence to our conjecture \eqref{conjecture}. We emphasize that the initial data for these numerics were \textit{not} restricted to be one-dimensional. 

We continue making the assumption \eqref{hat} on our potentials.
Recall that we are writing $W(u) = V(|u|)$. We begin with a compactness result.
\begin{thm}
Let $u_\e=(u_\e^{(1)},u_\e^{(2)})\in H^1((-H,H);\R^2)$ with $E_\e^{1\textup{D}}(u_\e)\leq C$.
Then there exists $u=(u_1,u_2)$ with
\begin{equation}\notag
    \Psi(u_1) := \int_{-u_{1}}^{u_{1}} \sqrt{V}\left(\sqrt{s^2+1-u_1^2} \right)\,ds\in BV((-H,H);[0,1] )
\end{equation}
such that up to a subsequence, $\uone \to u_1$ in $L^2(-H,H)$. In addition, $u_2\in H^1((-H,H);\R)$, $\utwo \to u_2$ in $C^{0,\gamma}(-H,H)$ for all $\gamma < 1/2$, and $|(u_1,u_2)|=1 \textup{ or }0$ $a.e.$ 
\end{thm}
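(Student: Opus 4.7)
The plan is to establish the three conclusions in sequence: strong convergence of $\utwo$, the $BV$ structure of the modulus via Modica--Mortola, and strong convergence of $\uone$ via a carefully chosen auxiliary $BV$ functional.

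\emph{Convergence of $\utwo$.} The divergence term immediately gives $\|(\utwo)'\|_{L^2}^2\leq 2C/L$. Combined with a uniform $L^\infty$ bound on $u_\e$ (obtainable by truncating the sequence without increasing energy, using the coercivity of $V$ at infinity), this yields a uniform $H^1(-H,H)$ bound on $\utwo$. Since the embedding $H^1(-H,H)\hookrightarrow C^{0,1/2}$ is compact into $C^{0,\gamma}$ for every $\gamma<1/2$, a subsequence converges in $C^{0,\gamma}$ to some $u_2\in H^1$.

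\emph{Modica--Mortola for the modulus.} The pointwise inequality $\big||u_\e|'\big|\leq|u_\e'|$ and AM--GM give
\[
 \tfrac{1}{\e}W(u_\e)+\e|u_\e'|^2\geq 2\sqrt{V(|u_\e|)}\,\big||u_\e|'\big|,
\]
so the composition $\Phi(|u_\e|):=\int_0^{|u_\e|}\sqrt{V(s)}\,ds$ is uniformly bounded in $BV(-H,H)$. Standard arguments (cf.\ \cite{ModicaARMA,Sternberg86}) then produce, on a further subsequence, $|u_\e|\to\chi$ in $L^1$ for some $\chi\in BV((-H,H);\{0,1\})$.

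\emph{Convergence of $\uone$.} The idea is to introduce
\[
 \psi_\e(x_2):=\int_0^{\uone(x_2)}\sqrt{V\!\left(\sqrt{s^2+(\utwo(x_2))^2}\,\right)}\,ds,
\]
whose weak derivative reads
\[
 \psi_\e' \;=\; \sqrt{V(|u_\e|)}\,(\uone)'\;+\;(\utwo)'\int_0^{\uone}\left.\partial_c\sqrt{V(\sqrt{s^2+c^2})}\right|_{c=\utwo}ds.
\]
AM--GM against the energy density bounds the first summand in $L^1$, while Cauchy--Schwarz together with the $L^2$ bound on $(\utwo)'$ and the $L^\infty$ bound from truncation (which renders the inner integral uniformly bounded for potentials such as $W_{CSH}$) controls the second. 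Helly's selection theorem yields pointwise a.e.\ convergence $\psi_\e\to\psi_0$ on a further subsequence. At a.e.\ $x_2$ with $\chi(x_2)=1$, the constraints $|u_\e(x_2)|^2\to 1$ and $\utwo(x_2)\to u_2(x_2)$ force $\uone(x_2)\to\pm\sqrt{1-u_2(x_2)^2}$; since the profile $t\mapsto\int_0^t\sqrt{V(\sqrt{s^2+u_2^2})}\,ds$ is odd and strictly monotone between these two candidate limits, the value of $\psi_0(x_2)$ uniquely selects the correct sign, which we use to define $u_1(x_2)$. On $\{\chi=0\}$ we set $u_1=0$. Dominated convergence then promotes this a.e.\ convergence to $L^2$, the identity $\Psi(u_1)=2|\psi_0|\,\chi$ transfers the $BV$ property from $\psi_0$ and $\chi$ to $\Psi(u_1)$, and $|u|^2=u_1^2+u_2^2=\chi\in\{0,1\}$ a.e.

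\emph{Main obstacle.} The delicate step is the last one: $\uone$ admits no $\e$-independent $BV$ or $H^1$ bound, since walls allow it to flip sign on an $O(\e)$-scale layer. What makes the argument go through is that the auxiliary quantity $\psi_\e$ is \emph{designed} so that its total variation encodes precisely the one-dimensional wall cost $K(u\cdot\nu)$ conjectured to be sharp; hence its $BV$-boundedness is exactly what energy-boundedness guarantees. Recovering the sign of $\uone$ from $\psi_0$ via the strict monotonicity of the heteroclinic profile away from the wells $|v|\in\{0,1\}$ is the key ingredient that turns convergence of $|\uone|^2$ into strong $L^2$ convergence of $\uone$ itself.
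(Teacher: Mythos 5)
Your proof follows essentially the same route as the paper's: the auxiliary ``entropy'' $\psi_\e$ you build is the same one the paper uses (theirs is $\psi(u_\e)=\int_{-\uone}^{\uone}\sqrt{V}(\sqrt{s^2+(\utwo)^2})\,ds$, i.e.\ twice yours), both proofs bound it in $BV$ via AM--GM against the energy density plus Cauchy--Schwarz for the $(\utwo)'$ term, both extract a.e.\ convergence, and both then recover $\uone$ from the a.e.\ limit of the entropy. The only real divergence in presentation is the last step: the paper writes $\psi(u_\e)=\uone\,J_\e$ with $J_\e=\int_{-1}^1\sqrt{V}(|(\uone t,\utwo)|)\,dt$, shows $J_\e\to J$ a.e.\ with $J>0$ where $\rho=1$, and divides; you invert the strictly monotone one-variable profile $a\mapsto\int_0^a\sqrt{V}(\sqrt{s^2+u_2^2})\,ds$ after first passing the modulus to its limit. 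These are the same non-degeneracy fact ($J\neq 0$ iff the profile is strictly monotone between the two wells) expressed in two ways, so I would not call the arguments genuinely different.

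Two things to flag. First, your claim that truncation ``without increasing energy'' yields a uniform $L^\infty$ bound is not correct for this functional: radial truncation $u\mapsto u/\max(1,|u|)$ is $1$-Lipschitz so it does not increase $W$ or $|u'|^2$, but it does \emph{not} control the divergence term $L|(\utwo)'|^2$, since $(\utwo)^*=\utwo/|u_\e|$ picks up a $\utwo\,(|u_\e|)'/|u_\e|^2$ term that can dominate. The paper avoids this entirely by extracting a uniform $L^4$ bound directly from the potential via \eqref{hat}, which is all you need: $\|\utwo\|_{L^2}$ then follows, and your Cauchy--Schwarz estimate for the $(\utwo)'$-term of $\psi_\e'$ also goes through with $\|\uone\|_{L^2}$ in place of the $L^\infty$ bound (you do not actually need the inner integral to be uniformly bounded, only in $L^2$). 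Second, your formula for $\psi_\e'$ implicitly differentiates $\sqrt{V}$, so you are secretly assuming $\sqrt{V}$ is at least Lipschitz; the paper's stated hypotheses (continuous $V$ plus \eqref{hat}) do not quite guarantee this, though the paper only sketches this step by citing \cite{GSV} and both arguments are fine for the prototype $W_{CSH}$. Neither of these is a structural gap, but the truncation claim as written is false and should be replaced by the $L^4$ argument.
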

\begin{proof} Throughout the course of the proof, we repeatedly pass to further and further subsequences of $\e$ converging to zero but suppress this from our notation. 
We notice that thanks to the uniform $L^{4}$ bound from \eqref{hat}, after passing to a subsequence,
\begin{align} \label{wk}
    u_\e \rightharpoonup u = (u_1, u_2) \mbox{ in } L^{4}.
\end{align}
Furthermore, this bound, along with the uniform $L^2$ bound on $(u_\e^{(2)})^\prime$ yields after passing to a further subsequence that
\beq
 u_\e^{(2)} \rightharpoonup u_2 \mbox{ in } H^1\quad\mbox{and}\quad 
    u_\e^{(2)} \to u_2 \mbox{ in } C^{0,\gamma}([-H,H]) \;\mbox{ for every } \gamma < 1/2.\label{L2conv}
\eeq
Finally, from the bound on the potential, there exists $\rho \in L^2((-H,H);\{0,1\})$ such that 
\begin{align} \label{normconv}
    |u_\e| \to \rho \mbox{ in } L^2. 
\end{align}\par
It remains to upgrade the convergence of $u_\e^{(1)}$ from weak to strong convergence. An algebraic identity is used in the proof of \cite[Theorem 6.1]{GSV} to obtain strong convergence. Here, without an explicit expression for $W,$ we proceed differently. As in \cite{GSV}, we utilize the ``entropy '' $\psi$ defined by
\begin{align}
    \psi(u_\e) := \int_{-u_\e^{(1)}}^{u_\e^{(1)}} \sqrt{V}\left(\left|(s,u_\e^{(2)})\right| \right)\,ds  = u_\e^{(1)} \int_{-1}^1 \sqrt{V}\left(\left|(u_\e^{(1)}t,u_\e^{(2)})\right|  \right)\,dt .
\end{align}
We set $ J_\e = \displaystyle\int_{-1}^1 \sqrt{V}\left(\left|(\uone t,u_\e^{(2)})\right| \right)\,dt $, so that \begin{equation}\label{jee}
u_\e^{(1)}J_\e = \psi(u_\e).
\end{equation}
On the one hand, 
\begin{align*}
    J_\e = \int_{-1}^1 \sqrt{V}\left(\sqrt{(|u_\e|^2 - (u_\e^{(2)})^2)t^2 + (u_\e^{(2)})^2} \right)\,dt;
\end{align*}
thus \eqref{L2conv}-\eqref{normconv} yield that 
\begin{align}\label{je}
    J_\e \to  \int_{-1}^1 \sqrt{V}\left(\sqrt{(\rho^2 - (u^{(2)})^2)t^2 + (u^{(2)})^2} \right)\,dt =: J \quad \mbox{ a.e. in }  (-H,H). 
\end{align}

On the other hand, using \eqref{hat} and a Cauchy-Schwarz argument completely analogous to that found in \cite{GSV}, we note that $\psi(u_\e)$ is bounded in $BV$. Upon passing to a subsequence, we conclude that $\{\psi(u_\e)\}$ converges in $L^1,$ and upon passing to a further subsequence, $\{\psi(u_\e)\} $ converges almost everywhere. Consequently, using \eqref{jee} and \eqref{je}, we find that $u_\e^{(1)}$ converges almost everywhere as well.

Finally, since $|u_\e^{(1)}| \leqslant |u_\e|$ and $|u_\e| \to \rho$ strongly in $L^2$, we can apply the Lebesgue dominated convergence theorem to conclude that $u_\e^{(1)}$ converges strongly in $L^2$ to some limit. From \eqref{wk}, this limit is $u_1,$ and it follows that $|(u_1,u_2)|=0$ or $1$ a.e. and that the limit of $\psi(u_\e)$ is $\psi(u_1,u_2)\in BV$. Since $\psi$ is $0$ when $u_1=0$, we see that $\Psi(u_1)=\psi(u)$, which concludes the proof.
\end{proof}

We turn next to $\Gamma$-convergence in this one-dimensional setting. The analogue of $E_0$ from \eqref{E0def} is the energy
\begin{align} \label{1dgamm}
    E_0^{1D}(u) := \frac{L}{2} \int_{-H}^H (u^{(2)}\,')^2 \,d{x_2} + \sum_{x_2 \in J_{u^{(1)}} \cap \{|u| = 1\}} K(u^{(2)}(x_2)) + c_0 \sh^0_{(-H,H)} \left(\partial (\{|u| = 1\}) \right).
\end{align}

 One can establish the $\Gamma$-convergence of $E_\e^{1D}$ to $E_0^{1D}$ in a completely analogous manner to the proof of  Theorem 6.2 in \cite{GSV}, so
we omit the details. 

Finally, as in Theorem 6.4 of \cite{GSV}, and with  identical proofs, one can characterize the minimizers of \eqref{1dgamm} explicitly. When the boundary conditions \eqref{1dbc} are different from $(\pm 1 , 0)$ the minimizer is unique and consists of a single wall occuring at $y=0$, no interfaces and bulk contribution in the regions $\{y > 0\} \cup \{ y < 0\}$: the function $u^{(2)}$ is piecewise linear and $u^{(1)}$ jumps from $\sqrt{1 - (u^{(2)})^2}$ to $- \sqrt{1 - (u^{(2)})^2}$ across $y=0$. The optimal jump value is easily determined by optimizing over the bulk and jump contributions. Finally when the Dirichlet boundary conditions on the top and the bottom are given by $(\pm 1, 0),$ we find two parameter regimes similar to the situation in \cite{GSV}. When $L/H$ is smaller than a certain threshold, the minimizer is unique and has both bulk divergence and jump contributions. However for larger $L/H$ values, the minimizer only has perimeter contribution, along with two interfaces, one connecting $(-1,0)$ to $(0,0)$ and the other connecting $(0,0)$ to $(1,0).$ These interfaces divide the interval into subintervals in each of which the minimizer is a constant. See Section \ref{1dex} for numerical simulations.

\subsection{Criticality conditions for $E_0$}\label{critsec}
In this section we will describe criticality conditions associated with critical points $u$ of the conjectured $\Gamma$-limit $E_0$ given by \eqref{E0def}.
For $u\in  (H_{\dive}\cap BV)(\Omega;\mathbb{S}^1\cup\{0\})$, we recall the notation $K(u\cdot\nu)$, with $K$ given by \eqref{1d} or \eqref{CSHwallcost} in the case of the Chern-Simons-Higgs potential, for the cost per arclength of a jump from one $\mathbb{S}^1$-valued state, say $u_1$  to another one, say $u_2$ across a jump set $J_u$, with $\nu$ denoting the unit normal pointing from the $1$ side of a wall to the $2$ side. We recall that for such a jump, an $H_{\dive}$ vector field must satisfy the requirement
\begin{equation}
u_1\cdot \nu=u_2\cdot \nu\quad\mbox{along the jump set}\;J_u.\label{cnormal}
\end{equation}
In light of \eqref{cnormal}, we will sometimes write just $u\cdot\nu$ when evaluating the normal component of $u$ along $J_u$.

We also recall that for portions of $J_u$ corresponding to a jump from the isotropic state $0$ to an $\mathbb{S}^1$-valued state $u$, the cost per unit arclength is given by $\frac{K(0)}{2}$ and condition \eqref{cnormal} becomes simply $u\cdot\nu=0$.

 Parts of the argument follow the same lines as in the proof of Theorem 4.1 in \cite{GSV} except that the cost in that paper is the one associated with a Ginzburg-Landau potential, namely $K_{GL}(u\cdot\nu)$ where
\[
K_{GL}(z):=\int_{-\sqrt{1-z^2}}^{\sqrt{1-z^2}}\left(1-z^2-y^2\right)\,dy,
\]
which can also be written as $\frac{4}{3}\left(1-(u\cdot \nu)^2\right)^{3/2}$ or equivalently $\frac{1}{6}\abs{u_--u_+}^3.$

However, in the present context, we will need to distinguish between variations of the `walls' separating two $\mathbb{S}^1$-valued states and `interfaces' separating the isotropic state from an $\mathbb{S}^1$-valued state. We will also examine criticality conditions at a junction corresponding to the intersection of these two kinds of curves. We begin with:

\bthm\label{critthm1}(Variations that fix the jump set)\\
Consider any $u\in BV(\Omega,  \mathbb{S}^1\cup\{0\}) \cap H_{\rm{div}} (\Omega,  \mathbb{S}^1\cup\{0\})$ such that $u_{\partial\Omega}\cdot\nu_{\partial \Omega}=g\cdot \nu_{\partial \Omega}$ on $\partial\Omega$. Denote by $J_u$ its jump set. Then if the first variation of $E_0$ evaluated at $u$ vanishes when taken with respect to perturbations compactly supported in $\Omega\setminus J_u$, one has the condition
 \begin{equation}
\label{graddiv}
u^\perp\cdot\nabla\dive u =0\mbox{ holding weakly on }\Omega\backslash J_u,
\end{equation}
where $u^\perp=\left(-u_2,u_1\right)$.

Now assume the first variation vanishes at $u$ when taken with respect to perturbations that fix $J_u$ and are supported within any ball $B(p,R)$ centered at a smooth point of $p\in J_u\cap\Omega$. If $J_u$ separates the ball  $B(p,R)$ into two regions where $u$ is given by $\mathbb{S}^1$-valued states $u_1$ and $u_2$ and if the traces $\dive u_1$ and $\dive u_2$ are sufficiently smooth, then one has the condition
\begin{equation}
\label{natbc}
K'(u\cdot\nu)=L[\dive u]\mbox{ on }J_u\cap\Omega,
\end{equation}
where $[\dive u]=\dive u_2-\dive u_1$ represents the jump in divergence across $J_u$ and $\nu$ is the unit normal to $J_u$ pointing from region $1$ to region $2$.
\ethm
\begin{rmrk} 
There is no natural boundary condition analogous to \eqref{natbc} for such variations taken about a point of $J_u$ where $J_u$ separates an isotropic state $0$ from an $\mathbb{S}^1$-valued state since the requirement of tangency in such a configuration is too rigid to allow for
a rich enough class of perturbations.
\end{rmrk}

\begin{proof}
To derive conditions \eqref{graddiv} and \eqref{natbc} we assume that for some point $p\in\Omega$ and for some $R>0$, either
$B(p,R)\cap J_u=\emptyset$ or else $p\in J_u$ and the following conditions hold $B(p,R)$:  \\
(i) The set $B(p,R)\cap J_u$ is a smooth curve, which we denote by $\Gamma$ and which admits a smooth parametrization by arclength, which we denote by $r:[-s_0,s_0]\to\Omega$ for some $s_0>0$ with $r(0)=p.$\\
(ii) On either side of $\Gamma$ the critical point $u$ and $\dive u$ possess smooth traces on $J_u$. We will denote the two components of  $B(p,R)\setminus \Gamma$ by $\Omega_1$ and $\Omega_2$ and we denote $u$ on these two sets by $u_j:\Omega_j\to \mathbb{S}^1$, for $j=1,2$.

We will present the argument for case (ii), indicating how the easier case (i) follows from the same analysis.

To define an allowable perturbation $u^t$ of the critical point $u$ given by $u_1$ and $u_2$, we must maintain
the property of being $\mathbb{S}^1$-valued, so to that end we introduce smooth functions $\phi_j:B(p,R)\times (-T,T)\to\R$ for some $T>0$ such that the perturbations of $u_1$ and $u_2$ take the form
\beq
u^t_j(x):=u_j(x)e^{it\phi_j(x,t)},\label{phit1}
\eeq
shifting just for the moment to complex notation. Introducing $\phi_j(x):=\phi_j(x,0)$, expanding \eqref{phit1} and reverting back to an $R^2$-valued description of $u^t_j$
we find that for $x\in\Omega_j^t$  one has
\beq
u^t_j(x)\sim u_j(x)+t\phi_j(x)u_j(x)^{\perp}.\label{utexp1}
\eeq

Along $J_u$, we must also be sure to preserve to $O(t)$ the $H_\dive$ condition \eqref{cnormal}, namely
\beq
u^t_1\cdot \nu=u^t_2\cdot \nu\quad\mbox{along}\;\Gamma.\label{Hdivet1}
\eeq
Invoking \eqref{cnormal} for the unperturbed critical point, along with \eqref{utexp1} we find that 
$u^t_1\cdot\nu=u^t_2\cdot\nu$ to $O(t)$ provided 
\[
\phi_1u_1^\perp\cdot \nu=\phi_2u_2^\perp\cdot\nu.
\]
However, since
\beq
u_j^\perp\cdot\nu= u_j\cdot\tau\quad\mbox{and}\quad u_1\cdot\tau=-u_2\cdot\tau\not=0\label{usefulstuff}
\eeq
along the jump set $J_u$ bridging $\mathbb{S}^1$-valued states, it follows that we must require
\beq
\phi_1(x)=\phi_2(x)\quad\mbox{for}\;x\in\Gamma\label{phi12}.
\eeq
For later use, we also record that from \eqref{utexp1} and \eqref{usefulstuff} one has along $\Gamma$ the expansion
\beq
u^t\cdot\nu\sim u_1\cdot\nu+t\phi_1(u_1\cdot\tau)+o(t).\label{normt}
\eeq

Now we calculate
\begin{eqnarray*}
&&\frac{d}{dt}_{|_{t=0}} E_0(u^t)=\frac{d}{dt}_{|_{t=0}} \left\{\frac{L}{2}\sum_{j=1}^2\int_{\Omega_j}\big(\dive\,u^t_j\big)^2\,dx\right\}+
\frac{d}{dt}_{|_{t=0}}\int_{\Gamma}K( u^t\cdot\nu)\,ds\\
&&= \frac{d}{dt}_{|_{t=0}} \left\{\sum_{j=1}^2 \frac{L}{2}\int_{\Omega_j}\big(\dive\,(u_j(x)+t\phi_j(x)u_j(x)^{\perp})\big)^2\,dx\right\}+\\
&&\frac{d}{dt}_{|_{t=0}}\int_{\Gamma}K( u_1\cdot\nu+t\phi_1(u_1\cdot\tau))\,ds
\end{eqnarray*}
Taking the $t$-derivatives and evaluating at $t=0$ we obtain
\begin{eqnarray*}
&&\frac{d}{dt}_{|_{t=0}} E_0(u^t)=\left\{L\sum_{j=1}^2\int_{\Omega_j}\big(\dive\,u_j\big)\big(\dive(\phi_ju_j^\perp)\big)\,dx\right\}\\
&&\int_{\Gamma}K'(u\cdot\nu)(u_1\cdot\tau)\,ds.
\end{eqnarray*}
Integrating by parts, a vanishing first variation of this type leads to the condition
\begin{eqnarray}
&&-L\sum_{j=1}^2\int_{\Omega_j}\nabla\big(\dive\,u_j\big)\cdot\phi_ju_j^\perp)\big)\,dx
+L\int_{\Gamma} \left\{\big(\dive\,u_1\big)\phi_1 u_1^\perp\cdot\nu-\big(\dive\,u_2\big)\phi_2 u_2^\perp\cdot\nu\right\}\,ds\nonumber\\
&&+\int_{\Gamma}K'(u\cdot\nu)(u_1\cdot\tau)\,ds=0.\label{fvar}
\end{eqnarray}

Now by taking the functions $\phi_j$ to be supported off of $J_u$ we arrive at condition \eqref{graddiv}. This also handles case (i)
where $B(p,R)\cap J_u=\emptyset.$ Then, in light of \eqref{graddiv}, along with \eqref{usefulstuff} and \eqref{phi12} we find that
\[
\int_{\Gamma}\left\{K'(u\cdot\nu)(u_1\cdot\tau)+L\big(\dive\,u_1-\dive\,u_2\big)\right\}(u_1\cdot\tau)\,\phi_1\,ds=0.
\]
Since $u_1\cdot\tau\not=0$ along the jump set and $\phi_1$ is arbitrary, we arrive at \eqref{natbc}.
\end{proof}

\begin{cor}\label{conservation} (cf. \cite{GSV}, Cor. 4.2). Suppose $u$ is smooth and critical for $E_0$ in the sense of \eqref{graddiv}. Then writing $u$ locally in terms of a lifting as $u(x)=e^{i\theta(x)}$ and defining the scalar $v:=\dive u$ one has that \eqref{graddiv} is equivalent to the following system for the two scalars $\theta$ and $v$:
\begin{eqnarray}
&&-\sin\theta\, \theta_{x_1}+\cos\theta \,\theta_{x_2}=v\label{thetav}\\
&&-\sin\theta\, v_{x_1}+\cos\theta \,v_{x_2}=0.\label{vconstant}
\end{eqnarray}
Consequently, starting from any initial curve in $\Omega$ parametrized via $s\mapsto \big(x_1^0(s),x_2^0(s)\big)$ along which $\theta$ and $v$ take values
$\theta_0(s)$ and $v_0(s)$ respectively, the characteristic curves, say\\
$t\mapsto \big(x_1(s,t),\,x_2(s,t)\big)$, are given by
\begin{eqnarray}
&&x_1(s,t)=\frac{1}{v_0(s)}\left[\cos\big(v_0(s)t+\theta_0(s)\big)-\cos\theta_0(s)\right]+x_1^0(s),\label{xst}\\
&&x_2(s,t)=\frac{1}{v_0(s)}\left[\sin\big(v_0(s)t+\theta_0(s)\big)-\sin\theta_0(s)\right]+x_2^0(s),\label{yst}
\end{eqnarray}
whenever $v_0(s)\not=0.$ The corresponding solutions $\theta(s,t)$ and $v(s,t)$ are given by
\begin{equation}
\theta(s,t)=v_0(s)t+\theta_0(s),\quad v(s,t)=v_0(s),\label{tvst}
\end{equation}
so that the characteristics are circular arcs of curvature $v_0(s)$ and carry constant values of the divergence. In case the divergence vanishes somewhere along the initial curve, i.e. $v_0(s)=0$, then the characteristic is a straight line.
\end{cor}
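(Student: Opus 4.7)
The verification of the system \eqref{thetav}--\eqref{vconstant} is really just bookkeeping: if we write $u=(\cos\theta,\sin\theta)$, then a direct computation gives
$\dive u = -\sin\theta\,\theta_{x_1}+\cos\theta\,\theta_{x_2}$,
which is exactly \eqref{thetav} after setting $v:=\dive u$. Similarly, $u^\perp = (-\sin\theta,\cos\theta)$, so the criticality condition $u^\perp\cdot\nabla\dive u = 0$ from \eqref{graddiv} becomes \eqref{vconstant} verbatim. I would simply record these two observations to establish the equivalence.

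For the characteristic structure, my plan is to apply the method of characteristics to \eqref{vconstant}, viewing it as a linear transport equation for $v$ with transport direction $u^\perp$. I would define the characteristic curves $t\mapsto(x_1(s,t),x_2(s,t))$ by the ODE system
\begin{equation*}
\dot x_1 = -\sin\theta,\qquad \dot x_2 = \cos\theta,\qquad (x_1,x_2)\big|_{t=0}=(x_1^0(s),x_2^0(s)).
\end{equation*}
Along such a curve, \eqref{vconstant} yields $\frac{d}{dt}v(x_1(s,t),x_2(s,t))=0$, so $v(s,t)\equiv v_0(s)$. Next, using the chain rule and \eqref{thetav}, one gets
\begin{equation*}
\frac{d}{dt}\theta(x_1(s,t),x_2(s,t)) = -\sin\theta\,\theta_{x_1}+\cos\theta\,\theta_{x_2} = v = v_0(s),
\end{equation*}
so that $\theta(s,t)=v_0(s)t+\theta_0(s)$, which is \eqref{tvst}.

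Having identified $\theta$ along the characteristic, the ODE system for $(x_1,x_2)$ becomes explicit:
\begin{equation*}
\dot x_1 = -\sin\bigl(v_0(s)t+\theta_0(s)\bigr),\qquad \dot x_2 = \cos\bigl(v_0(s)t+\theta_0(s)\bigr).
\end{equation*}
When $v_0(s)\neq 0$, direct antidifferentiation in $t$ together with the initial conditions produces \eqref{xst}--\eqref{yst}. One then reads off that
\begin{equation*}
\bigl(x_1(s,t)-x_1^c(s)\bigr)^2+\bigl(x_2(s,t)-x_2^c(s)\bigr)^2 = \frac{1}{v_0(s)^2},\quad
(x_1^c,x_2^c):=\Bigl(x_1^0-\tfrac{\cos\theta_0}{v_0},\,x_2^0-\tfrac{\sin\theta_0}{v_0}\Bigr),
\end{equation*}
so the characteristic is a circular arc of curvature $|v_0(s)|$ on which the divergence takes the constant value $v_0(s)$. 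The degenerate case $v_0(s)=0$ is immediate: $\theta$ is constant along the characteristic, the velocity $(-\sin\theta_0,\cos\theta_0)$ is a constant unit vector, and integration yields a straight line. No step is delicate here; since $u$ is assumed smooth, existence and uniqueness of the characteristic ODE and the chain-rule manipulations are all standard, and the only thing to watch is keeping track of signs and initial conditions in the antiderivatives.
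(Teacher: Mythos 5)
Your proof is correct and is exactly the standard method-of-characteristics argument that the authors are invoking (the paper simply cites [GSV], Cor.\ 4.2 without spelling it out): identify $\dive u$ and $u^\perp\cdot\nabla\dive u$ in terms of $\theta$ and $v$, note that both $\theta$ and $v$ transport along the integral curves of $u^\perp$, solve for $\theta$ and then integrate the characteristic ODE explicitly. The only cosmetic remark is that the arc has signed curvature $v_0(s)$ (unsigned curvature $|v_0(s)|$), which you have correctly; otherwise the computation matches the intended one.
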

\vskip.1in
We also consider the implications of criticality with respect to perturbations of the jump set itself.
\bthm\label{critthm2}(Variations of the jump set)\\
Under the same assumptions on $u$ as in the previous theorem, suppose in addition to the criticality with respect to perturbations that fix the location of $J_u$, one also assumes the 
 vanishing of the first variation of $E_0,$ evaluated at $u$,  allowing for local perturbations of the jump set $J_u\cap\Omega$ itself. Then along any points of $J_u$ where $u$ jumps between two $\mathbb{S}^1$-valued maps $u_1$ and $u_2$, a vanishing first variation leads to the condition
\begin{equation}
\frac{L}{2}\left((\dive\,u_1)^2-(\dive\,u_2)^2\right)
-L (\dive\,u_1+\dive\,u_2 )'\,(u_1\cdot\tau)-L (\dive\,u_1+\dive\,u_2)\,(u_1\cdot\tau)'
-K(u\cdot\nu)\,\kappa=0,
\label{wally}
\end{equation}
at any point $p\in J_u$ such that  $J_u$, $u_1$ and $u_2$ are sufficiently smooth in some ball centered at $p$ . Here $\kappa$ denotes the curvature of $J_u$,
 $\tau$ denotes the unit tangent to $J_u$ and $\nu$ is the unit normal to $J_u$ pointing from the $u_1$ side of $J_u$ to the $u_2$ side. The notation $(\cdot)' $ refers to the tangential derivative along the jump set. 
 
 For portions of $J_u$  separating an $\mathbb{S}^1$-valued state $u^*$ from the isotropic state $0$, criticality takes the form
 \beq
 \frac{L}{2}\left(\dive u^*\right)^2-L\left(\dive u^*\right)'(u^*\cdot\tau)+\frac{K(0)}{2}\kappa=\lambda,\label{intercrit}
 \eeq
  where $\lambda$ is a Lagrange multiplier that is present only if $E_0$ is considered subject to an area constraint on the measure of
  the isotropic phase $0$. Also, since $u\in H_\dive(\Omega)$ requires that $u^*\cdot\nu=0$ along such a portion of $J_u$, we note that
  in \eqref{intercrit} one either has $u^*\cdot\tau\equiv 1$ or $u^*\cdot\tau\equiv-1$.
  \ethm
 \noindent
 \begin{proof}
  To derive condition \eqref{wally} assume that for some point $p\in J_u$ the following conditions hold in a ball $B(p,R)$ for some radius $R$:\\
(i) The set $B(p,R)\cap J_u$ is a smooth curve, which we denote by $\Gamma$ and which admits a smooth parametrization by arclength, which we denote by $r:[-s_0,s_0]\to\Omega$ for some $s_0>0$ with $r(0)=p.$\\
(ii) On either side of $\Gamma$ the critical point $u$ is $C^2$ with $C^1$ traces on $J_u$. We will denote the two components of  $B(p,R)\setminus \Gamma$ by $\Omega_1$ and $\Omega_2$ and we denote $u$ on these two sets by $u_j:\Omega_j\to \mathbb{S}^1$, for $j=1,2$.
Again, our convention for the unit normal $\nu$ is that it points out of $\Omega_1$ into $\Omega_2$.

For the calculation it will be convenient to assume that for $j=1,2$,  $u_j$ has been smoothly extended so as to be defined in an open neighborhood of $\Gamma$. We take this extension to be executed so that $u_1$ is constant along $\nu$ and so that $u_2$ is constant along $-\nu$.

In order to effect a smooth perturbation of $\Gamma$, $u_1$ and $u_2$ we now introduce a vector field $X\in C_0^1(B(p,R);\R^2)$. For convenience we will assume that $X(x)$ is parallel to $\nu(x)$ for $x\in \Gamma$ and so we introduce the scalar function $h:[-s_0,s_0]\to\R$ such that
\beq
X(r(s))=h(s)\nu(s)\quad\mbox{for}\;s\in [-s_0,s_0],\label{hdefn}
\eeq
where $h(\pm s_0)=0$.
Here we have written $\nu(s)$ for the composition $\nu(r(s)).$
 Then let $\Psi:B(p,R) \times (-T, T) \rightarrow \Omega$ solve
\beq\label{odepsi} \frac{\partial \Psi}{\partial t} \,= \, X (\Psi)
\qquad \Psi (x, 0) \, = \, x, \eeq for some $T >0$. Expanding in $t$ we find that 
\begin{equation}D\Psi (\cdot, t) \, \sim\, I \, + \, t \nabla X \, +o(t),\label{psiexp}
\end{equation}
so that in particular one has the identity
\beq
J\Psi(x,t):={\rm det}\,D\Psi\sim 1+t\dive\,X(x)+o(1).\label{Jac}
\eeq
Throughout this proof, the symbol $\sim$ refers to an equivalence up to terms that are $o(t)$.

Now we define the evolution of the curve $\Gamma$ via the vector field $X$ by
$
\Gamma^t:=\Psi(\Gamma,t)
$
with corresponding parametrization
\beq
r^t(s):=\Psi(r(s),t)\sim r(s)+X(r(s))t\sim r(s)+th(s)\nu(s),\label{tparam}
\eeq
in light of \eqref{hdefn}. A simple calculation goes to show that the normal $\nu^t$ to $\Gamma^t$ takes the form
\beq
\nu^t(s) \sim \nu(s)-th'(s)\tau(s),\label{nut} 
\eeq
where we have introduce the notation $\tau$ for the unit tangent $r'(s)$ to $\Gamma$.
We caution that the parameter $s$ used to parametrize $\Gamma^t$ is not an arclength parametrization on this deformed curve.
Indeed one finds through an application of the Frenet equation that 
\[
r^t\,'(s)=r'(s)+th'(s)\nu(s)+th\nu'(s)=(1-th(s)\kappa(s))\tau(s)+th'(s)\nu(s)
\]
where $\kappa$ denotes the curvature of $\Gamma$, so that
\beq
\abs{r^t\,'(s)}\sim 1-th(s)\kappa(s).\label{notarc}
\eeq

Similarly, we define the deformation of the two sets $\Omega_1$ and $\Omega_2$ via
\beq\label{Omegat}
 \Omega_j^t \, : = \, \Psi(\Omega_j, t)\quad\mbox{for}\;j=1,2.
\eeq 
To define the allowable evolution of the critical point $u$ given by $u_1$ and $u_2$ requires a little more care. Firstly, we must maintain
the property of being $\mathbb{S}^1$-valued, so to that end we introduce smooth functions $\phi_j:B(p,R)\times (-\tau,\tau)\to\R$ such that the perturbations of $u_1$ and $u_2$ take the form
\beq
u^t_j(x):=u_j(x)e^{it\phi_j(x,t)},\label{phit}
\eeq
shifting just for the moment to complex notation. Introducing $\phi_j(x):=\phi_j(x,0)$, expanding \eqref{phit} and reverting back to an $R^2$-valued description of $u^t_j$
we find that for $x\in\Omega_j^t$  one has
\beq
u^t_j(x)\sim u_j(x)+t\phi_j(x)u_j(x)^{\perp}.\label{utexp}
\eeq
As before $(a,b)^\perp=(-b,a).$

Secondly, we must preserve to $O(t)$ the $H_\dive$ condition \eqref{cnormal}, namely
\beq
u^t_1\cdot \nu^t=u^t_2\cdot \nu^t\quad\mbox{along}\;\Gamma^t.\label{Hdivet}
\eeq
To this end, we observe that along $\Gamma^t$ one has
\begin{eqnarray}
u^t_j(r^t(s))&\sim& u^t_j\big(r(s)+th(s)\nu(s)\big)+t\phi_j\big(r(s)+th(s)\nu(s)\big)u_j^\perp\big(r(s)+th(s)\nu(s)\big)\nonumber\\
&\sim& u_j(r(s))+t
\phi_j(r(s))u_j^\perp(r(s)).\label{gtexp}
\end{eqnarray}
It is here that we require the slight extensions of the original functions $u_j$ that are constant along the normal direction of $\nu$  to make \eqref{utexp} well-defined in $\Omega_j^t\setminus \Omega_j$ and to make \eqref{gtexp} correct to $O(t)$.

Then once we apply \eqref{nut} and \eqref{gtexp} to \eqref{Hdivet} 
we arrive at the requirement that 
\beq
(u_1+t\phi_1u_1^\perp)\cdot (\nu  - t h'\tau)\sim (u_2+t\phi_2u_2^\perp)\cdot (\nu-th'\tau)\quad\mbox{along}\;\Gamma.\label{tnorm}
\eeq
Equating terms at $O(t)$ and using \eqref{usefulstuff},
we find that necessarily,
\beq
h'(s)=\frac{1}{2}\big(\phi_1(r(s))+\phi_2(r(s))\big)\quad\mbox{for}\;s\in[-s_0,s_0].\label{hphi}
\eeq
For later use we also record that fact, based on expanding the left hand side of \eqref{tnorm}, that 
\beq
u^t_1(r^t(s))\cdot \nu^t(s)\sim u(r(s))\cdot\nu(s)+
t\big(\phi_1(r(s))-h'(s)\big)(u_1\cdot\tau)\quad\mbox{for}\;s\in [-s_0,s_0].\label{normcomp}
\eeq

With these preliminaries taken care of, we are now ready to proceed with the calculation of the first variation 
$\frac{d}{dt}_{|_{t=0}} E_0(u^t).$ We begin with the variation of the divergence term in the energy taken over $\Omega^t_j$ for $j=1,2$. We observe that
\begin{eqnarray*}
&& \int_{\Omega_j^t}\big(\dive\,u^t_j\big)^2\,dx\sim
 \int_{\Omega^t_j}\big(\dive\,u_j+t\,\dive\,(\phi_ju_j^\perp)\big)^2\,dx\\
 &&\sim\int_{\Omega_j}\left[\dive\,u_j(\Psi(y,t))+t\,\dive\,\phi_j(\Psi(y,t))u_j^\perp(\Psi(y,t))\right]^2(1+t\dive X(y))\,dy,
\end{eqnarray*}
where we have utilized the change of variables $x=\Psi(y,t)$ and invoked \eqref{Jac} to obtain the leading order behavior of the Jacobian of the change of variables. Then, since $\Psi\sim y+tX(y)$ we find
\begin{eqnarray}
&&
\frac{d}{dt}_{|_t=0}  \int_{\Omega^t_j}\big(\dive\,u^t_j\big)^2\,dx=\nonumber\\
&&\int_{\Omega_j}\left[
(\dive\,u_j(y))^2\,\dive X+2\,\dive\,u_j(y)\dive\,(\phi_j(y)u_j^\perp(y))+\frac{\partial}{\partial t}_{|_{t=0}}\left(\dive\,u_j(y+tX(y)\right)^2
\right]\,dy=\nonumber\\
&&
\int_{\Omega_j}\left[
(\dive\,u_j(y))^2\dive X+2\,\dive\,u_j(y)\dive\,(\phi_j(y)u_j^\perp(y))+2\,\dive\,u_j(y)\nabla\dive\,u_j(y)\cdot X(y)\right]\,dy=\nonumber\\
&&
\int_{\Omega_j}\left[\dive\left((\dive\,u_j)^2X\right)+2\,\dive\,u_j(y)\dive\,(\phi_j(y)u_j^\perp(y))\right]\,dy\nonumber\\
&&\label{jacint}
\end{eqnarray}
Applying the divergence theorem, and invoking \eqref{graddiv} along with the compact support of $X$ within the ball $B(p,R)$, we conclude that
\begin{eqnarray}
&&
 \frac{d}{dt}_{|_{t=0}}\frac{L}{2}\left(\int_{\Omega^t_1}(\dive\,u^t_1)^2\,dx+\int_{\Omega^t_2}(\dive\,u^t_2)^2\,dx\right)=\nonumber\\
&&
\frac{L}{2}\int_{\Gamma}\left\{\left((\dive\,u_1)^2-(\dive\,u_2)^2\right)h+2\,(\dive\,u_1)\phi_1u_1^\perp\cdot\nu-
2\,(\dive\,u_2)\phi_2u_2^\perp\cdot\nu\right\}\,ds=\nonumber\\
&&
\frac{L}{2}\int_{\Gamma}\left\{\left((\dive\,u_1)^2-(\dive\,u_2)^2\right)h+2\,\left(\phi_1\dive\,u_1+\phi_2\dive\,u_2\right)(u_1\cdot\tau)\right\}\,ds,
\label{divenergy}
\end{eqnarray}
where in the last line we used \eqref{usefulstuff}.

We turn now to the variation of the jump energy. By \eqref{normcomp} we have
\begin{eqnarray*}
&&
K(u^t(r^t(s))\cdot\nu^t(s))\sim K\big(u(r(s))\cdot\nu(s)+t(\phi_1(r(s))-h'(s))(u_1(r(s)\cdot\tau(s))\big)\\
&&
\sim K\big(u\cdot\nu\big)+tK'\big(u\cdot\nu\big)\,(\phi_1-h')(u_1\cdot\tau),
\end{eqnarray*}
where all terms in the last line are evaluated along $\Gamma$, that is, evaluated at $x=r(s)$.
Then we can appeal to \eqref{notarc} to calculate that
\begin{eqnarray}
&&
 \frac{d}{dt}_{|_{t=0}}\int_{\Gamma^t}K(u^t\cdot\nu^t)\,ds=\nonumber\\
 &&
  \frac{d}{dt}_{|_{t=0}}\int_{-s_0}^{s_0}\bigg\{(K(u(r(s))\cdot\nu(s))+t\,K'(u(r(s))\cdot\nu(s))(\phi_1(r(s))-h'(s))(u_1(r(s)\cdot\tau(s))\bigg\}
  \bigg\{1-th(s)\kappa(s)\bigg\}\,ds\nonumber\\
  &&
 =\int_{\Gamma}K'(u\cdot\nu)(\phi_1-h')(u_1\cdot\tau)-K(u\cdot\nu)\,h\kappa\,ds\nonumber\\
 &&
 =\int_{\Gamma}L\big(\dive\,u_2-\dive\,u_1\big)(\phi_1-h')(u_1\cdot\tau)-K(u\cdot\nu)\,h\kappa\,ds,\nonumber\\
 &&\label{wallvar}
\end{eqnarray}
 where in the last line we have used the criticality condition \eqref{natbc}.

Combining \eqref{divenergy} and \eqref{wallvar} we obtain
\begin{eqnarray*}
&&\frac{d}{dt}_{|_{t=0}} E_0(u^t)=\\
&&
\int_{\Gamma}\left\{\frac{L}{2}\left((\dive\,u_1)^2-(\dive\,u_2)^2\right)-K(u\cdot\nu)\,\kappa \right\}\,h\,ds\\
&&
+L\int_{\Gamma}\left\{ (\phi_1+\phi_2)\dive\,u_2+(\dive\,u_1-\dive\,u_2)h'  \right\}\,(u_1\cdot\tau)\,ds\\
&&
=\int_{\Gamma}\left\{\frac{L}{2}\left((\dive\,u_1)^2-(\dive\,u_2)^2\right)-K(u\cdot\nu)\,\kappa \right\}\,h\,ds\\
&&
+L\int_{\Gamma}\left (\dive\,u_1+\dive\,u_2  \right)\,(u_1\cdot\tau)h'\,ds,
\end{eqnarray*}
in light of \eqref{hphi}. Integrating by parts in the last integrals, and using that $h(-s_0)=h(s_0)=0$ we finally obtain
\begin{eqnarray*}
&&\frac{d}{dt}_{|_t=0} E_0(u^t)=\\
&&\int_{\Gamma}\left\{\frac{L}{2}\left((\dive\,u_1)^2-(\dive\,u_2)^2\right)
- L(\dive\,u_1+\dive\,u_2 )'\,(u_1\cdot\tau)- L(\dive\,u_1+\dive\,u_2)\,(u_1\cdot\tau)'
-K(u\cdot\nu)\,\kappa \right\}\,h\,ds.
\end{eqnarray*}
Since criticality implies that this last integral must vanish for all $h$, we obtain \eqref{wally}. 

The derivation of \eqref{intercrit} follows along similar lines so we omit the details. One difference to note, however, is that in the presence of an area constraint on the measure of
$\{u\equiv 0\}$, the normal component $h$ of the vector field $X$ along $\Gamma$ must additionally satisfy the requirement
\[
\int_{-s_0}^{s_0}h(s)\,ds=0
\]
so that the perturbed jump set preserves area to $O(t)$.
This condition leads to the appearance of the Lagrange multiplier in \eqref{intercrit}.
\end{proof}
\vskip.2in
Our last consequence of criticality for a vector field $u$ with respect to the functional $E_0$ concerns the possible presence in $\Omega$ of a junction point $P$ such that for some $R>0$, the set
$B(p,R)\cap J_u$ consists of four curves meeting at $p$. We wish to focus on the configuration where two of these curves, which we label as $\Gamma_{01}$ and $\Gamma_{03}$, are interfaces separating an isotropic region, which we label as $\Omega_0$, from two disjoint regions, $\Omega_1$ and $\Omega_3$, where $u$ is given by $u_1:\Omega_1\to \mathbb{S}^1$ and $u_3:\Omega_3\to \mathbb{S}^1$, respectively. Wedged between $\Omega_1$ and $\Omega_3$ we assume there exists a set $\Omega_2$ where $u$ takes on another $\mathbb{S}^1$-valued state $u_2$. The dashed curve separating $\Omega_1$ from $\Omega_2$, representing the wall across which $u$ jumps from $u_1$ to $u_2$ we denote by $\Gamma_{12}$, and the dashed curve separating $\Omega_2$ from $\Omega_3$, representing the wall across which $u$ jumps from $u_2$ to $u_3$ we denote by $\Gamma_{12}$. We write $\tau_{ij}$ and $\nu_{ij}$ for the unit tangent and unit normal to the curve $\Gamma_{ij}$ where each $\tau_{ij}$ points away from the junction $P$ and $\nu_{ij}$ points from the region $\Omega_i$ into the region $\Omega_j$. See Fig. \ref{Junctionfig}.
\afterpage{
\begin{figure}[h]
\centering
\includegraphics[scale=.80]{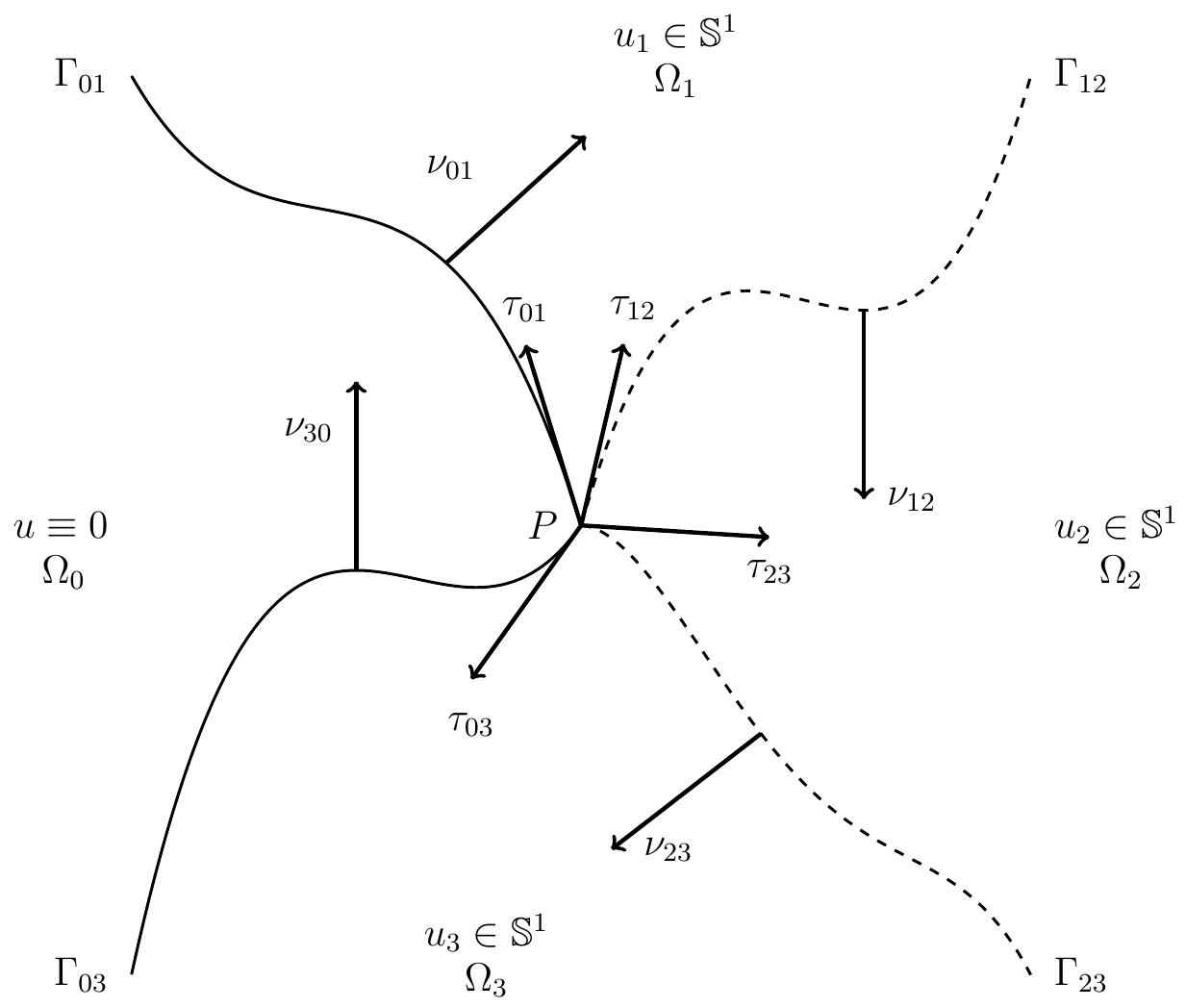}
\caption{A configuration with a junction point $P$ at which two components of the interface, $\Gamma_{01}$ and $\Gamma_{03}$ meet two components of the wall $\Gamma_{12}$ and $\Gamma_{23}$.}
 \label{Junctionfig}
\end{figure}
}

Our reason for focusing on this particular configuration is predicated on the belief that it is somehow quite generic behavior in a neighborhood of a singular point on the isotropic-nematic phase boundary; see the discussion in Section \ref{eyeballs}. This belief is grounded in the findings of numerous numerical experiments we have conducted and examples we have constructed for this model, some of which appear in the last section of this article. Our hope is that the condition derived in Theorem \ref{junction} below will be of use in constructing particular candidates for minimizers of $ E_0 $ as well as perhaps being of use in ruling out certain junction configurations that are found to violate \eqref{junk}.

To state the next result we must introduce the notation $\tau_{ij}$ for the unit tangent on $\Gamma_{ij}$ oriented so as to point away from $P$, and $\nu_{ij}$ for the unit normal to $\Gamma_{ij}$, pointing from region $\Omega_i$ into $\Omega_j$.

\bthm\label{junction}(Criticality conditions at a junction). Assume a configuration in a neighborhood of a point $P\in\Omega$ as described above and as depicted in Fig. \ref{Junctionfig}. Assume that in a neighborhood of $P$ the functions $u_j$ and their divergences $\dive\,u_j$ for $j=1,2,3$ are all smooth in the closure of $\Omega_j$ including at the junction point $P$. Assume further that the four curves $\Gamma_{01},\Gamma_{12}, \Gamma_{23}$ and $\Gamma_{03}$ are all smooth near $P$. Then criticality of $E_0$ with respect to variations
of $P$, the four curves $\Gamma_{ij}$ and the three functions $u_j$ leads to the condition
\begin{eqnarray}
&&\frac{K(0)}{2}\big(\tau_{01}+\tau_{03}\big)+K\big(u_1\cdot\nu_{12}\big)\tau_{12}+K\big(u_2\cdot\nu_{23}\big)\tau_{23}\nonumber\\
&&
=L\bigg\{ (\dive\,u_1)(u_1\cdot\tau_{01}) \nu_{01}  + (\dive\,u_3)(u_3\cdot\tau_{03}) \nu_{03}    \bigg\} \nonumber\\
&&
-L\bigg\{\big(\dive\,u_1+\dive\,u_2\big)\big(u_1\cdot\tau_{12}\big)\nu_{12}+  
             \big(\dive\,u_2+\dive\,u_3\big)\big(u_2\cdot\tau_{23}\big)\nu_{23}  \bigg\}\label{junko}
\end{eqnarray}
where all quantities above are evaluated at $P$.
\ethm
The proof of Theorem \ref{junction} can be found in the appendix.

\section{Examples: Analytical constructions for large $L$ and some numerics}\label{examp}
We conclude with an exploration of possible morphologies for our limiting energy $E_0$, which we recall is given by 
\begin{equation}\notag
E_0(u)=\f{L}{2} \int_\Omega (\dive u)^2 \,d x+ \frac{K(0)}{2} \mathrm{Per}_\Omega (\{|u| = 1\}) + \int_{J_u \cap \{ |u| = 1\}} K(u \cdot \nu) \,d \sh^1,
\end{equation}
with the cost $K$ given by \eqref{1d}. After describing in Section \ref{1dex}  some numerics that complement our rigorous work in Section \ref{oned} for the case where $\Omega$ is a rectangle, we will focus on two main settings:  (i) the case where $\Omega$ is a disk and competitors must satisfy a boundary condition in the sense of \eqref{diri} where $g$ has degree $k\in\mathbb{Z}$; and (ii) the case of an island of isotropic phase, generated by an area constraint, lying inside a nematic whose far field is given by $\vec{e}_1$. 

For both settings (i) and (ii) we will not work directly with $E_0$ but rather with a problem that at least formally can be viewed as the large $L$ limit of $E_0$, namely
\beq
E_0^{\infty}(u):=\frac{K(0)}{2} \mathrm{Per}_\Omega (\{|u| = 1\}) + \int_{J_u \cap \{ |u| = 1\}} K(u \cdot \nu) \,d \sh^1,\label{AVlimit}
\eeq
defined for $u\in (BV\cap H_\dive)(\Omega,  \mathbb{S}^1\cup\{0\})$ such that 
\beq
\dive\,u=0\quad\mbox{in}\;\Omega,\label{nodiv}
\eeq and perhaps supplemented by the condition $u_{\partial\Omega}\cdot\nu_{\partial \Omega}=g\cdot \nu_{\partial \Omega}$ on $\partial\Omega$ if one wishes to specify Dirichlet data $g:\partial\Omega\to \mathbb{S}^1\cup\{0\}$, or such that $\abs{\{u=0\}}=const$ or
$\abs{\{\abs{u}=1\}}=const$ if one wishes to specify an area constraint. We also note that the $H_\dive$ requirement still enforces the condition that competitors have trace from the nematic side that is tangent to any interface, i.e. \eqref{tanjump} where $u_-=0$.

We will construct critical points for $E_0^{\infty}$ that we expect to be local or even globally minimal and 
we observe that these divergence-free vector fields are competitors in the minimization of $E_0$ for finite $L$. Thus, we expect that they may well be close to critical points or {\it perhaps} even minimizers of $E_0$ when $L$ is large. As we shall see, this expectation is supported by simulations on the gradient flow for $E_\e$ where $L$ is large but fixed and then $\e$ is taken to be small. 

Regarding all simulations in this section, we obtain critical points for the energy $E_\e$ by simulating gradient flow for $E_\e$ using the software package COMSOL \cite{comsol}. Unless specified otherwise, we do not claim that solutions that we obtain are minimizers of $E_\e$ or prove that these solutions converge to critical points of the limiting energy. We will infer such convergence in cases where we are able to show via an analytical construction that a similar looking critical point of $E_0$ does exist. 

We consider $E_0^{\infty}$ rather than $E_0$ here in part because, as we will describe below, the divergence-free condition \eqref{nodiv} provides a rigidity that simplifies the search for critical points. We hasten to add, however, that to us minimization of $E_0^{\infty}$ is a fascinating and nontrivial problem in its own right that
one might view as a version of the Aviles-Giga limiting problem which allows for phase transitions, i.e. isotropic regions, as well as walls. Of course this entire project represents just an initial investigation of $E_\e$ and $E_0$ that we hope will generate interest in future analysis of critical points and minimization of these functionals for $L$ finite. In that vein, we hope the work in this section provides intuition and techniques that can be generalized, and that the criticality conditions derived in Section \ref{critsec} provide some tools.

So what does criticality mean for $E_0^{\infty}$? Within the nematic region where $\abs{u}=1$, but away from
the jump set $J_u$, if we locally describe a competitor $u$ via $u(x)=\big(\cos\theta(x),\sin\theta(x)\big)$, then \eqref{nodiv} implies that
\[
\nabla\theta\cdot \big(-\sin\theta,\cos\theta\big)=0.
\]
Defining the characteristic direction via $x_1'=-\sin\theta,\;x_2'=\cos\theta$ we see that $\theta$ and therefore $u$ is constant along characteristics and further that $u$ is orthogonal to characteristics and so one concludes in particular that:
\beq
\mbox{Characteristics}\;\mbox{for}\;E_0^{\infty}\;\mbox{must be straight lines along which}\; u\;\mbox{is orthogonal and constant.}
\label{AVrecipe}
\eeq
This rigidity, familiar to those who work on Aviles-Giga, is what will allow us to carry out some of the analytical constructions in this section.

On the other hand, this amount of rigidity limits one's ability build a rich class of variations of $E_0^{\infty}$ and so we will not attempt to directly compute $L=\infty$ analogues of the ODE's \eqref{wally} or \eqref{intercrit} or the junction condition \eqref{junko}.

\subsection{Critical points of $E_0$ in a rectangle.}\label{1dex}
Here we take $\Omega$ to be the rectangle $(-0.2,0.2)\times(-0.5,0.5)$  and we seek critical points of the energy $E_0$ which satisfy the boundary conditions $u\left(\cdot,\pm1/2\right)=\pm{\vec{e}}_1=(\pm1,0)$ and satisfy periodic boundary conditions on the sides $x=\pm \frac{1}{2}$.

As discussed in Section \ref{oned}, when restricting minimization of $E_\e$ to one-dimensional competitors which in this case are functions of $y$, we obtain full $\Gamma$-convergence of the one-dimensional analog  of $E_\e$ to that of $E_0$. Further, the behavior of minimizers of $E_0$ among one-dimensional competitors is determined by the value of $L$. When $L$ exceeds a certain threshold, the bulk divergence contribution vanishes and the energy of a critical point is associated solely with a wall along the $x$-axis that separates the regions of zero divergence.  When $L$ falls below the threshold value, the bulk divergence contribution is present along with a cost of the wall associated with the jump set of the minimizer. When $L$ tends to zero, the wall disappears and the energy minimizing vector field is essentially a linear interpolation of the boundary data. 
\begin{figure}
\centering
\begin{subfigure}{.5\textwidth}
  \centering
  \includegraphics[width=\linewidth]{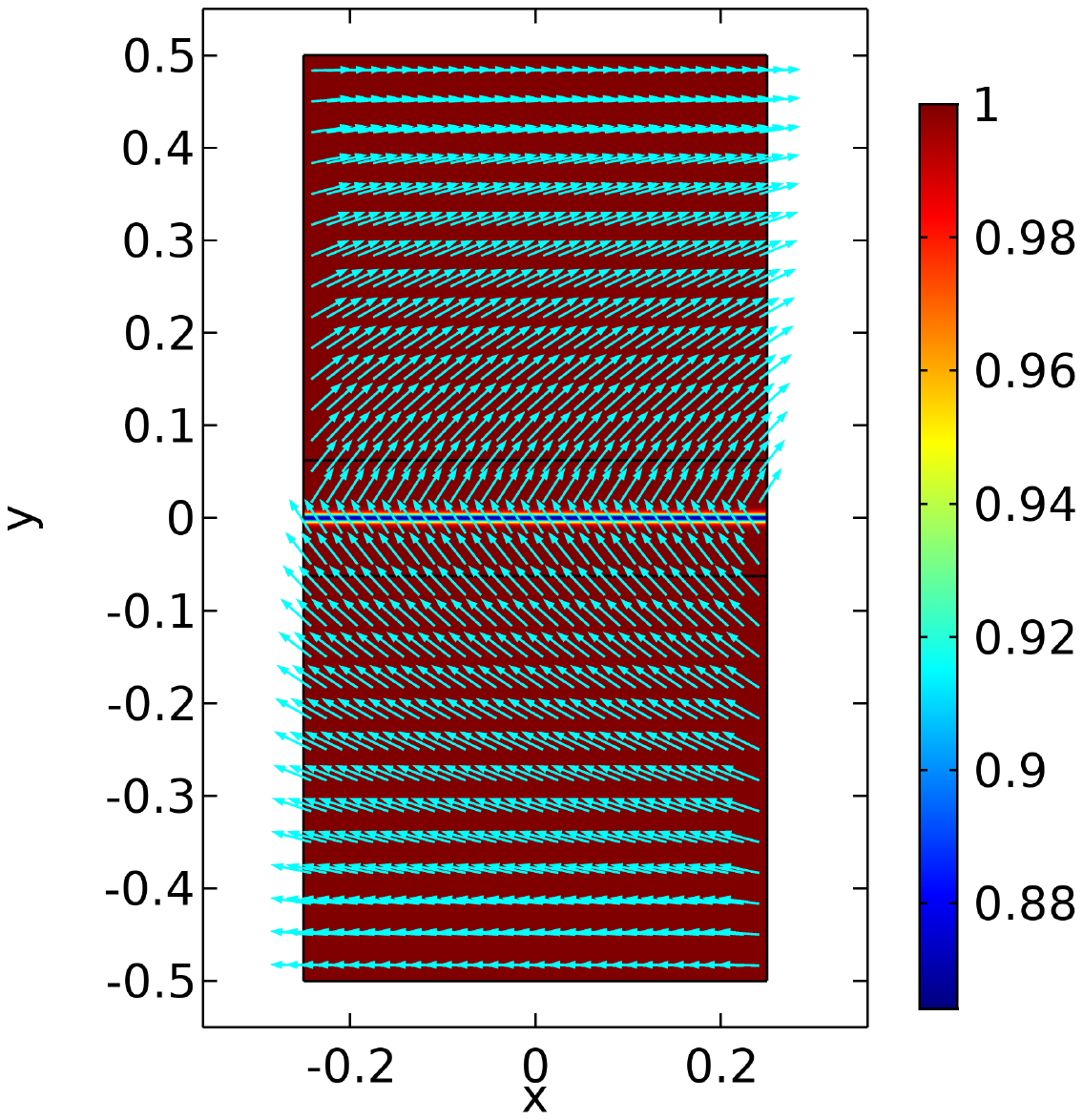}
  \caption{$L=.4$ }
\end{subfigure}%
\begin{subfigure}{.5\textwidth}
  \centering
  \includegraphics[width=\linewidth]{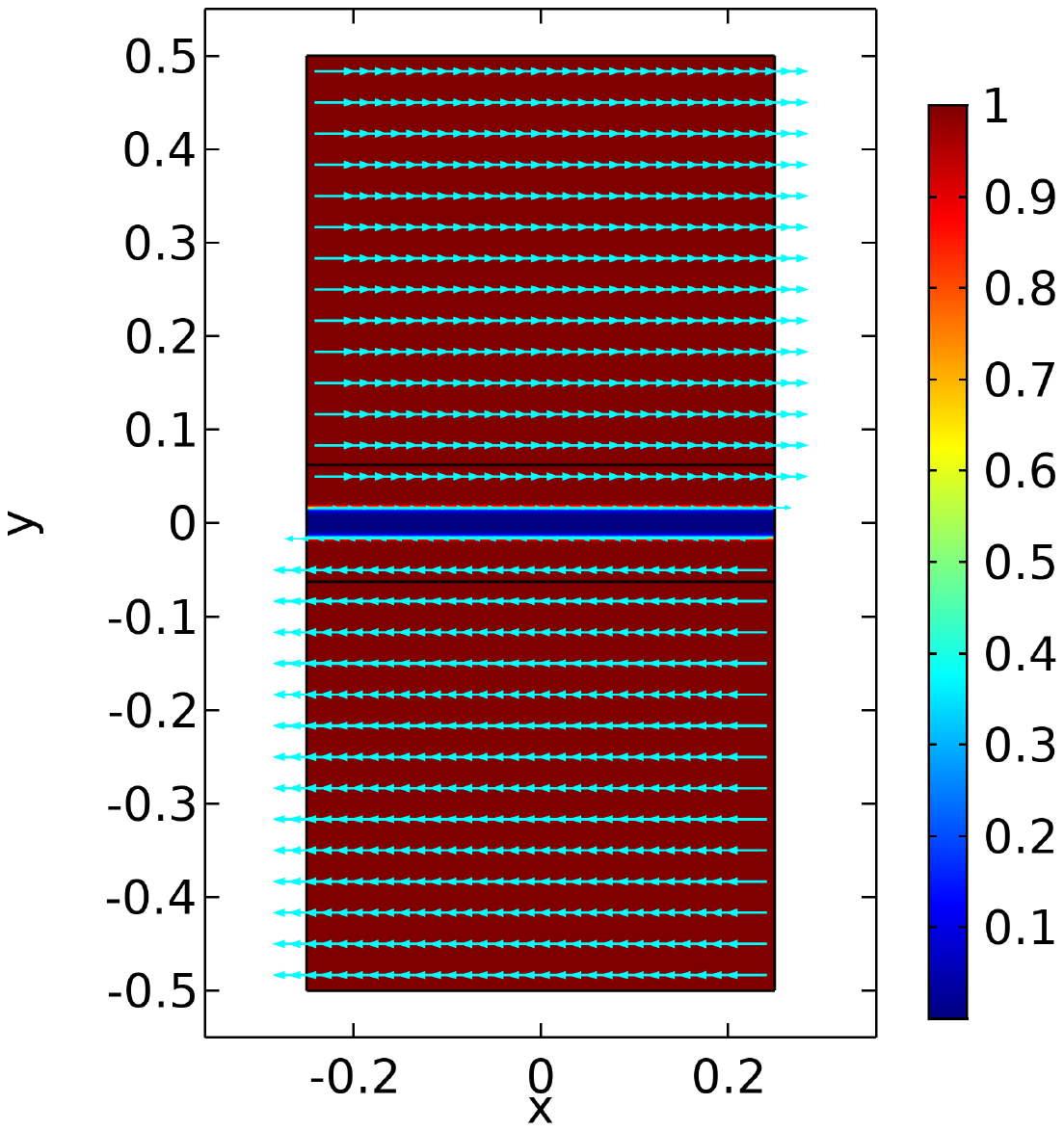}
  \caption{$L=.5$}
\end{subfigure}
\caption{Critical points of $E_\e$ in the rectangle. Here $\varepsilon=0.001$.}
\label{fig:1d.1}
\end{figure}

Figs.~\ref{fig:1d.1}-\ref{fig:1d.3} present the results of simulations for the gradient flow for $E_\e$ in the rectangle. It is evident that, even though the simulations are fully two-dimensional, the critical points obtained in this way are one-dimensional and conform to the picture described in the previous paragraph. Two main observations follow from these figures. First, the results seem to indicate that the wall cost is indeed one-dimensional as we conjectured earlier in the paper. Second, in all simulations done in the rectangle, the critical points we observe are always one-dimensional, even for large values of $L$. This is in contrast to the results in \cite{GSV} for the version of the problem with the Ginzburg-Landau instead of the Chern-Simons-Higgs potential. In that work, one-dimensional critical points are found to be unstable with respect to formation of cross-tie configurations for large $L$---such instability does not seem to be present here, at least numerically.
\begin{figure}[H]
    \centering
    \includegraphics[width=2.5in]{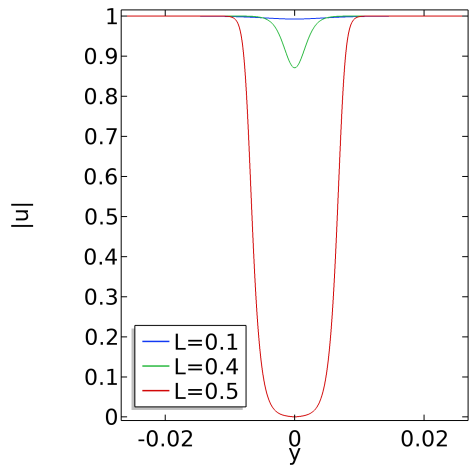}
    \caption{Cross-section of the wall for a critical point of $E_\e$ in the rectangle. The $y$-axis is as shown in Fig.~\ref{fig:1d.1} and $\varepsilon=0.001$. When $L\geq0.5$, the profile is independent of $L$ (not shown).}
    \label{fig:1d.3}
\end{figure}

\subsection{Degrees other than $0$ or $1$ are too costly}\label{divraghav}

Before we begin the constructions and numerics pertaining to $E_0^{\infty}$, we first present
 a theorem which will elucidate the behavior of certain critical points for $E_0$ and provide an explanation for some of the morphology to come. The theorem yields a lower bound for the $L^2$-norm of the divergence, in the spirit of analogous lower bound results of Jerrard \cite{jerrard} and Sandier \cite{sandier} for the Ginzburg-Landau energy. The proofs of the Jerrard/Sandier results rely crucially on the fact that the square of the gradient of a function is a sum of squares of its components, a feature that is not shared by the square of the divergence of a vector field. We overcome this difficulty by working in Fourier space. 

\bthm\label{thmdiv}
Fix $0 < \rho < \rho^\prime \leqslant 1$, set $A:= \{ x \in \R^2: \rho < |x| < \rho^\prime\}$ and let $C_t$ be a circle of radius $t$ centered at the origin. Suppose that  $u \in  C^1(\overline{A};\R^2)$ is such that $|u| \geqslant 1/2$ on $A$ and $\mathrm{deg}\, (u, C_t) = d \neq 0,1$ for any $t \in [\rho,\rho^\prime]$.   Then  
\begin{align}
&\int_A (\dive u)^2 \,dx \geqslant |\pi d \log(\rho^\prime/\rho) + 4|, & d < 0, \label{eq1.1.1}\\
&\int_A (\dive u)^2 \,dx \geqslant |\pi (d-1) \log(\rho^\prime/\rho) - 4|, & d > 1.  \label{eq1.1.2}
\end{align}
\ethm
\brk By majorizing $\int_A (\dive u)^2$ by $\int_A |\nabla u|^2$ in \eqref{eq1.1.1}-\eqref{eq1.1.2}, it follows from results for $\abs{\nabla u}^2$ that the scaling in $\rho^\prime/\rho$ is optimal. Note that there is no similar lower bound when $d=1$ due to existence of the divergence-free vector field $\vec{e}_\theta.$ 
\erk
\begin{proof}
The proof of this result proceeds using Fourier series.\par
1. Developing $u$ in a Fourier series, given by 
\begin{align*}u \sim \sum_{n \in \mathbb{Z}} u_n(r) e^{i n \theta},\end{align*}
we first derive a formula for the degree of $u$ in terms of its Fourier coefficients. Denoting by $u^t$ the restriction of $u$ to $C_t,$ and writing $u_n = f_n + i g_n,$  we compute
\begin{align} \notag
d := \deg(u^t, C_t) &= \frac{1}{2\pi} \int_{C_t} u^t \times u^t_\tau \,d \sh^1 \\ \notag
&= \frac{1}{2\pi} \int_{C_t} \sum_n n \begin{pmatrix}
f_n \cos n\theta - g_n \sin n\theta \\
f_n \sin n \theta + g_n \cos n \theta
\end{pmatrix} \times \begin{pmatrix}
-f_n \sin n\theta - g_n \cos n\theta \\
f_n \cos n \theta - g_n \sin n \theta
\end{pmatrix}\\
&= \sum_{n \in \mathbb{Z}} n \left( |f_n(t)|^2 + |g_n(t)|^2\right)  \label{degform} \\
&= \sum_{n \in \mathbb{Z}} n|u_n(t)|^2 \label{degform2},
\end{align}
where in the last line we have used orthogonality. \\

2. As in the proof of Thm. 5.1 in \cite{GSV}, we find 
\begin{align*}
\dive u = \sum_{n \in \mathbb{Z} } \dive V_n, 
\end{align*}
in $L^2,$ where we have 
\begin{align*}
\dive V_1 &= \left(f_1^\prime(r) + \frac{f_1(r)}{r} \right), \\
\dive V_n &= \left( f_n^\prime(r) + \frac{n f_n(r)}{r}\right) \cos(n-1)\theta - \left( g_n^\prime(r) + \frac{n g_n(r)}{r}\right)\sin(n-1)\theta.  \hspace{1cm} n \neq 1.\end{align*}
It follows that 
\begin{align*}
\frac{1}{\pi}\int_A (\dive u)^2 &= 2 \int_\rho^{\rho^\prime} \left( f_1^\prime + \frac{f_1}{r}\right)^2 r\,dr + \sum_{n \neq 1} \int_\rho^{\rho^\prime}\left( \left( f_n^\prime + \frac{n f_n(r)}{r}\right)^2 + \left( g_n^\prime + \frac{ng_n(r)}{r} \right)^2 \right) r\,dr \\
&\geqslant  \int_\rho^{\rho^\prime} \left[2\frac{f_1^2}{r}  + \sum_{n \neq 1 } \frac{n^2(f_n(r)^2 + g_n(r)^2)}{r} \right]  \,dr \\
&\quad + \int_\rho^{\rho^\prime} \left[4 f_1(r) f_1^\prime(r) + \sum_{n \in \mathbb{Z}, n \neq 1} 2 n \left(f_n(r) f_n^\prime(r) + g_n(r) g_n^\prime(r) \right) \right]\,dr \\
&:= I + II. 
\end{align*}
We estimate the integrals $I$ and $II$ separately as follows, beginning with $II.$ From Eqn. \eqref{degform} and the assumption that $\deg(u,C_t)=d$ for each $t\in [\rho,\rho']$, we obtain that for each $r$ 
\begin{align}\notag
II &= \int_\rho^{\rho'}\f{\partial}{\partial r}\left[ 2f_1^2 + \sum_{n\in \mathbb{Z},n\neq 1}n(f_n^2+g_n^2)   \right]\, dr \\
&= \int_\rho^{\rho'}\f{\partial}{\partial r}\left[ f_1^2-g_1^2 + \sum_{n\in \mathbb{Z}}n(f_n^2+g_n^2)   \right]\, dr\notag \\
&=\int_\rho^{\rho'}\f{\partial}{\partial r}\left[ f_1^2-g_1^2 + d  \right]\, dr\notag \\
&=f_1(\rho')^2-f_1(\rho)^2+g_1(\rho)^2 - g_1(\rho')^2.
\end{align}
Using now the definition of $f_1, g_1,$ and the fact that $|u| \leqslant 1,$ we find that $\|f_1\|_\infty, \|g_1\|_\infty \leqslant 1.$ It follows that 
\begin{align}
|II| \leqslant 4. 
\end{align}
We next turn to estimating $I.$ Let us first suppose $d> 1.$ We have 
\begin{align} \notag
I &\geqslant \int_\rho^{\rho^\prime} \left( \frac{f_1(r)^2}{r} + \sum_{n \in \mathbb{Z}, n \neq 1} \frac{n^2(f_n(r)^2 + g_n(r)^2)}{r} \right) \,dr \\ \notag
&= \int_\rho^{\rho^\prime}\left(  \frac{f_1(r)^2}{r} + \sum_{n \in \mathbb{Z}, n \neq 1} \frac{(n^2 -n)(f_n(r)^2 + g_n(r)^2)}{r} + \frac{n(f_n(r)^2 + g_n(r)^2)}{r}\right) \,dr \\ \label{0.10}
&\geqslant \int_\rho^{\rho^\prime}\left( \frac{f_1(r)^2}{r}+ \sum_{n \in \mathbb{Z}, n \neq 1} \frac{n(f_n(r)^2 + g_n(r)^2)}{r}\right) \,dr \\ \label{0.11}
&= \int_\rho^{\rho^\prime} \frac{d - g_1^2(r)}{r} \,dr \\ \label{0.12}
&\geqslant \int_\rho^{\rho^\prime} \frac{d-1}{r}\,dr\\
&= (d-1) \log(\rho^\prime/\rho). \label{g1}
\end{align}
In going from \eqref{0.10} to \eqref{0.11} we have used \eqref{g1} while in going from \eqref{0.11} to \eqref{0.12} we have used that $g_1^2 \leqslant 1.$ 
This completes the proof of the theorem when $d > 1,$ so we turn our attention to when $d < 0.$ In this case, we have 
\begin{align*}
I &\geqslant \int_\rho^{\rho^\prime}\left(  \frac{f_1(r)^2}{r} + \sum_{n \in \mathbb{Z}, n \neq 1} \frac{n^2(f_n(r)^2 + g_n(r)^2)}{r}\right) \,dr \\ \notag
&= \int_\rho^{\rho^\prime}\left(  \frac{f_1(r)^2}{r} + \sum_{n \in \mathbb{Z}, n \neq 1} \frac{(n^2 +n)(f_n(r)^2 + g_n(r)^2)}{r} - \frac{n(f_n(r)^2 + g_n(r)^2)}{r}\right) \,dr \\ 
&\geqslant \int_\rho^{\rho^\prime}\sum_{n \in \mathbb{Z}} - n \frac{f_n^2(r) + g_n^2(r)}{r}\,dr \\
&\geqslant -d \int_\rho^{\rho^\prime} \frac{1}{r}\,dr \\
&= -d \log (\rho^\prime/\rho) = |d|\log (\rho^\prime/\rho). 
\end{align*}
\end{proof}
It also is worth mentioning that among degree 1 singularities, the $L^2$-norm of the divergence can vary greatly and may or may not satisfy a lower bound of the type in the previous theorem. For example, for a Ginzburg-Landau vortex $\frac{x}{|x|}$, the $L^2$-norm of the divergence taken over an annulus centered at the origin blows up logarithmically as the inner radius approaches 0. However, an $\vec{e}_\theta$ vortex, given by $\frac{x^{\perp}}{|x|}$, is divergence free. This observation is relevant to our model, especially at corner-type defects on the phase boundary. In many of our examples, the director $u$, which must be tangent to the phase boundary, switches the sense of tangency at a corner. If such a switch occurs at a corner of the phase boundary in the interior of the domain, then walls must intersect the defect in order to avoid infinite energy from the bulk divergence term; see Fig. \ref{nowalls}. Conversely, if $u$ \textit{does not} change its sense of tangency at a corner on the interface, then the singularity can be locally resolved by the formation of a partial $\vec{e}_\theta$ vortex in which an infinite family of characteristics emanate from the defect.
\begin{figure}
\centering
\begin{subfigure}{.45\textwidth}
  \centering
  \includegraphics[scale=.8]{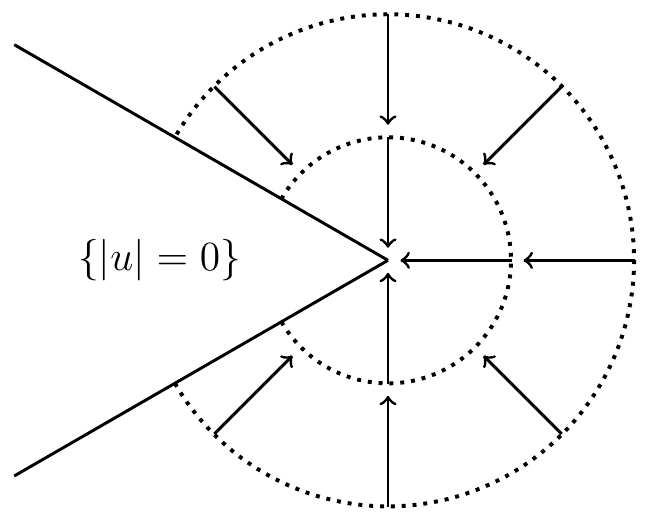}
\end{subfigure}%
\begin{subfigure}{.55\textwidth}
  \centering
  \includegraphics[scale=.8]{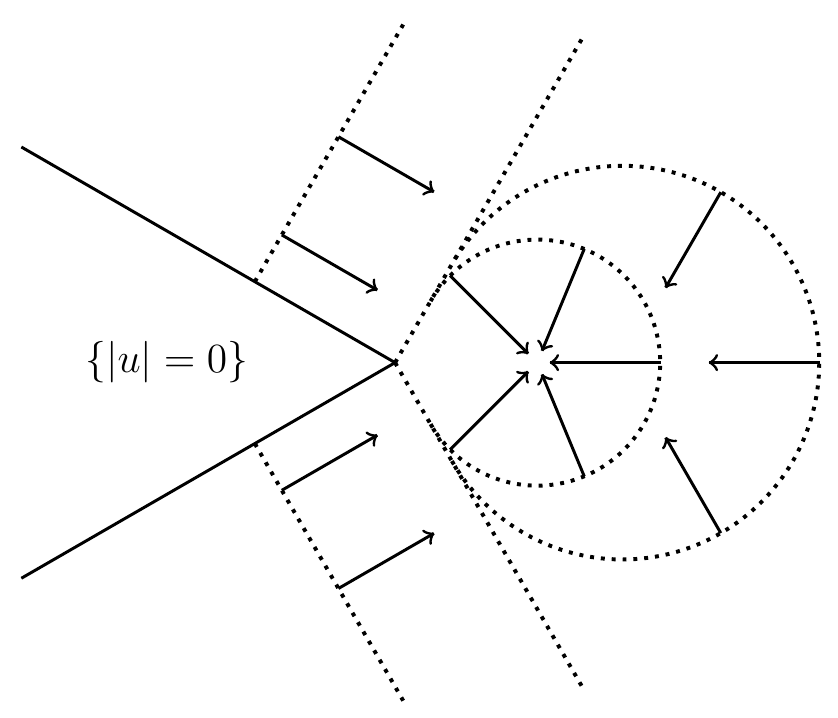}
\end{subfigure}
\caption{Corner on an interface at which $u$ changes tangency: If there are no walls, a Ginzburg-Landau vortex forms in one of two ways, resulting in infinite $E_0$ energy. The dotted lines represent characteristics, and the arrows represent the $\mathbb{S}^1$-valued director, which is perpendicular to the characteristics. 
}
\label{nowalls}
\end{figure}

\subsection{Critical points of $E_0$ and $E_0^{\infty}$ in a disk.}\label{diskex}
In this section we consider critical points of the energy $E_0^{\infty}$ in the disk $\Omega$ of radius $R$ among competitors satisfying  the boundary condition
\begin{equation}
u\big(R\cos \frac{s}{R},R\sin \frac{s}{R}\big)\cdot\nu_{\partial\Omega} = \big(\cos(ks+\alpha),\sin(ks+\alpha)\big)\cdot\nu_{\partial\Omega}\quad \textup{for }s\in\left[0,2\pi R\right),\label{sec4bc}
\end{equation}
where $k\in\mathbb Z$, $\alpha\in\mathbb R$, and the boundary is parametrized with respect to arc-length. 

 The simplest cases to consider are $(k,\alpha)=(1,\pi/2)$ and when $k=0$  for which minimizers of $E_0$ are the divergence-free vortex $$u_0=\vec{e}_\theta=\left(\frac{-y}{\sqrt{x^2+y^2}},\frac{x}{\sqrt{x^2+y^2}}\right),$$ and the constant state $$u_0=\big(\cos\alpha,\sin\alpha\big),$$ respectively. Indeed, trivially, in both cases $E_0(u_0)=\min{E_0}=0$. Hence our principal interest in this section will be to understand the behavior of critical points for other choices of $(k,\alpha)$.

We begin by considering the case where $k$ is a negative integer and $\alpha=\pi$. To gain some insight into how these boundary conditions influence the morphology of interfaces and walls, we present in Figure \ref{fig:deg1_neg} the large-time asymptotics for gradient flow dynamics for the energy $E_\e$ with boundary conditions $u|_{\partial\Omega}=-\big(\cos ks, \sin ks\big)$ for two values of $L$. Then in Figure \ref{fig:deg2_neg} we present simulations for data with degrees $-2$ and $-3$. Although we do {\it not} impose an area constraint in these simulations in order to induce a phase transition, these numerics nonetheless indicate a substantial presence of the isotropic phase in the form of an island with $2\abs{k}+2$ boundary singularities. Generally speaking, these islands appear to grow in size as $\abs{k}$ grows, and for $k<-1$, both configurations with a single or multiple vortices are possible. Studies on vortices using the Ginzburg-Landau potential such as \cite{BBH} or---more appropriately to this study---the Chern-Simons-Higgs potential with $L = 0$ in the elastic energy \cite{Spirn-Kurzke} tempt one to think of these islands for $\e > 0$ as ``defects'' arising from the negative degree boundary condition. However, the numerics and Theorem \ref{thmdiv} indicate that the cores of the defects do not shrink in the $\e \to 0$ limit. Indeed, from Theorem \ref{thmdiv}  it follows that a defect with a negative degree must either be inside an isotropic region or have walls originating from the defect. The latter situation was, in fact, observed in \cite{GSV} for the degree $-1$ defects while the Ginzburg-Landau potential considered in \cite{GSV} did not allow for presence of interfaces.\par
\begin{figure}
\centering
\begin{subfigure}{.4\textwidth}
  \centering
  \includegraphics[width=\linewidth]{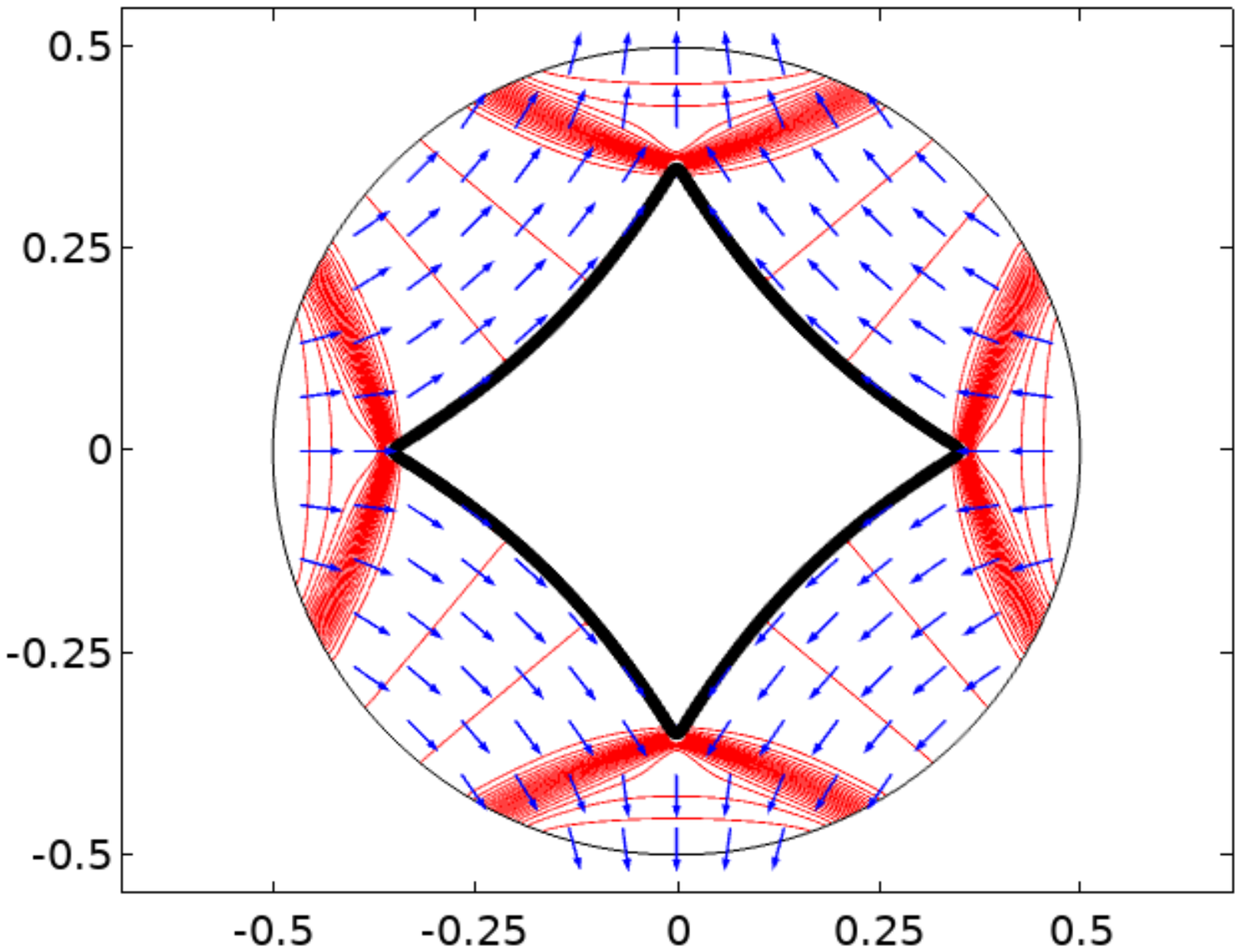}
  \caption{$L=0.4$, $\e=0.005$}
  \label{fig:tactoid1}
\end{subfigure}%
\begin{subfigure}{.4\textwidth}
  \centering
  \includegraphics[width=\linewidth]{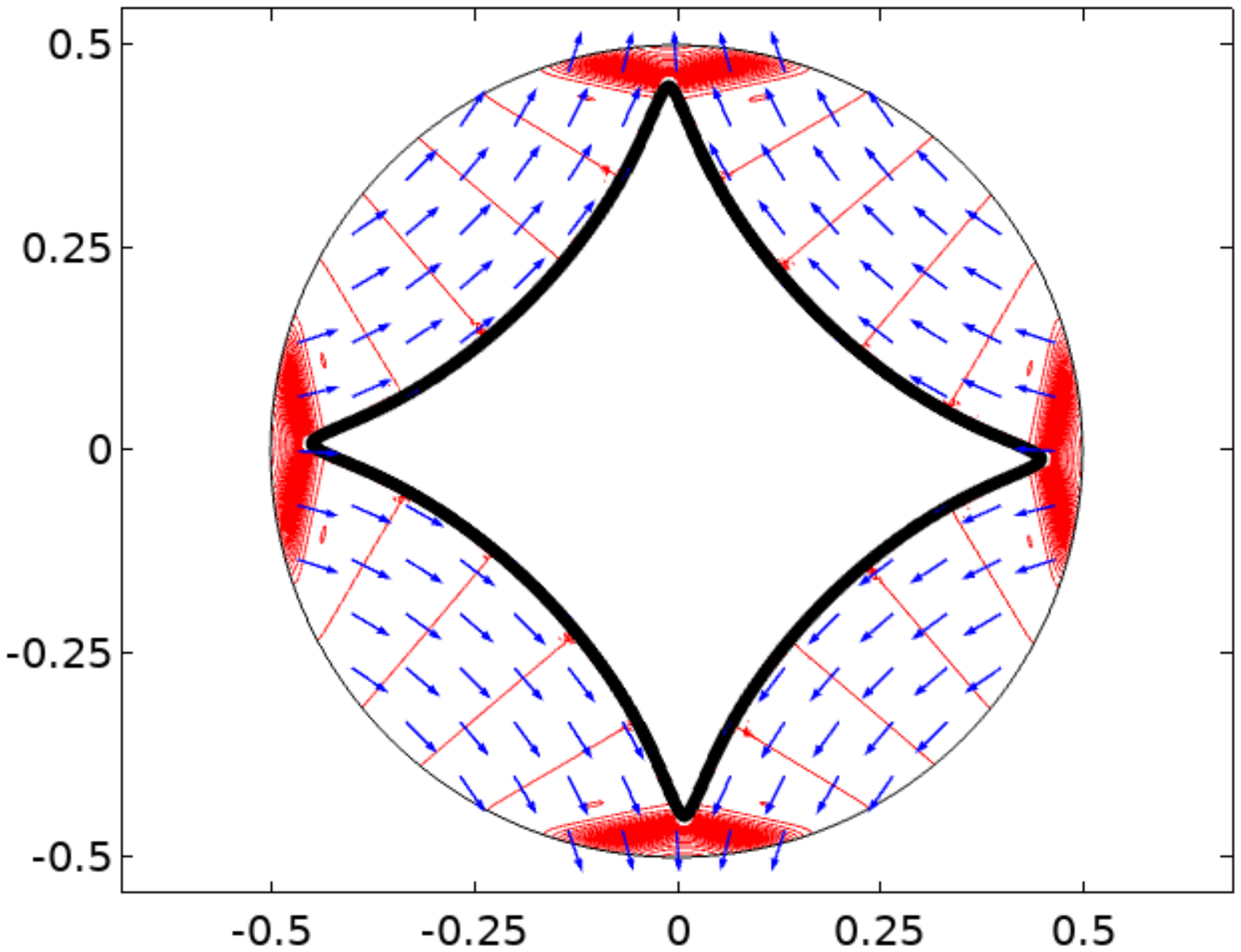}
  \caption{$L=2$, $\e=0.005$}
  \label{fig:tactoid2}
\end{subfigure}
\caption{Critical points of $E_\e$ for $(k,\alpha)=(-1,\pi)$. The red curves represent level sets of $\mathrm{div}\,u$ while $|u|=0.5$ on the black curves that enclose the isotropic phase. }
\label{fig:deg1_neg}
\end{figure}

\begin{figure}
\centering
\begin{subfigure}{.4\textwidth}
  \centering
  \includegraphics[width=\linewidth]{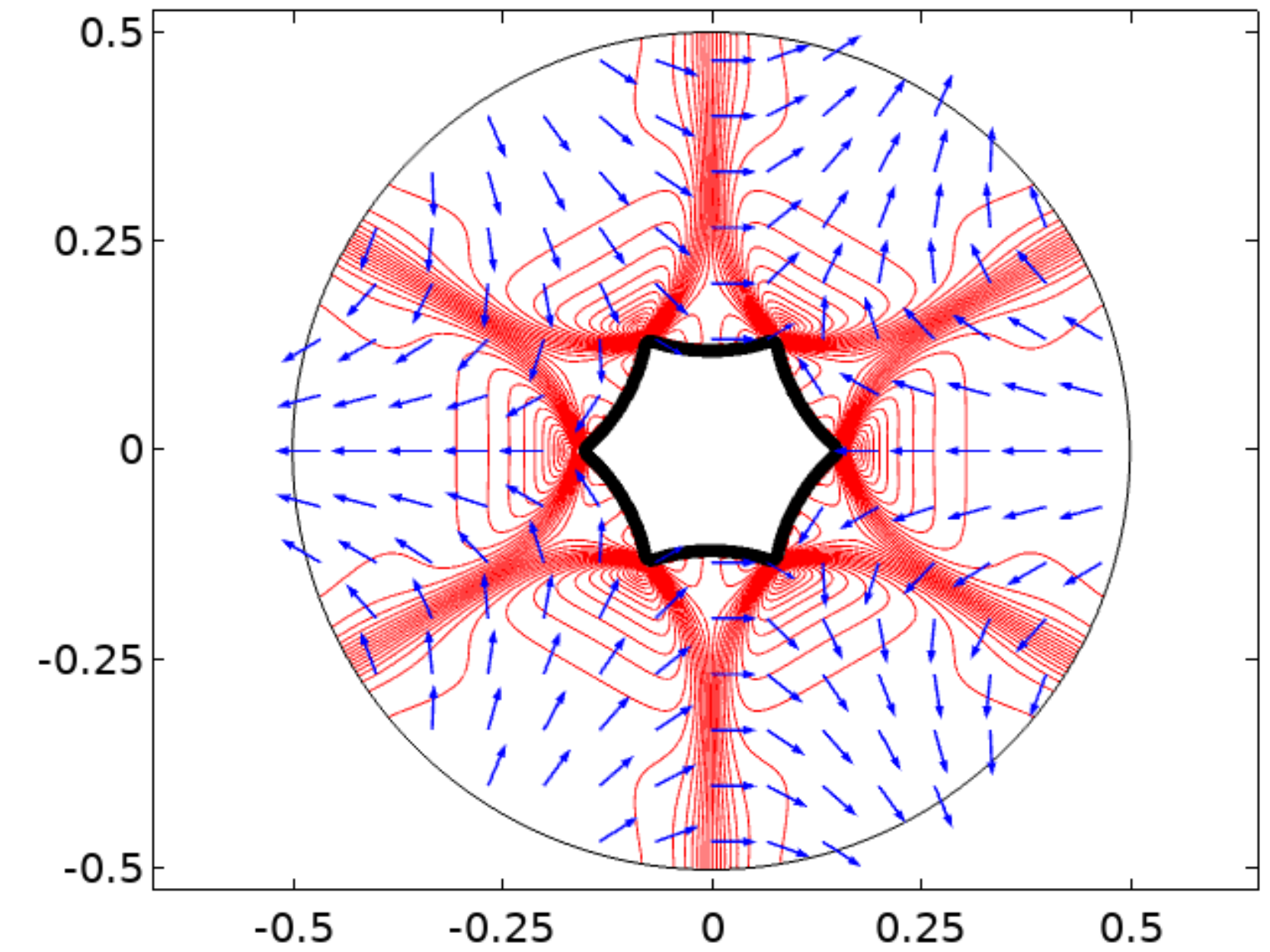}
  \caption{$L=0.04$, $\e=0.005$}
  \label{fig:tactoid3}
\end{subfigure}%
\begin{subfigure}{.4\textwidth}
  \centering
  \includegraphics[width=\linewidth]{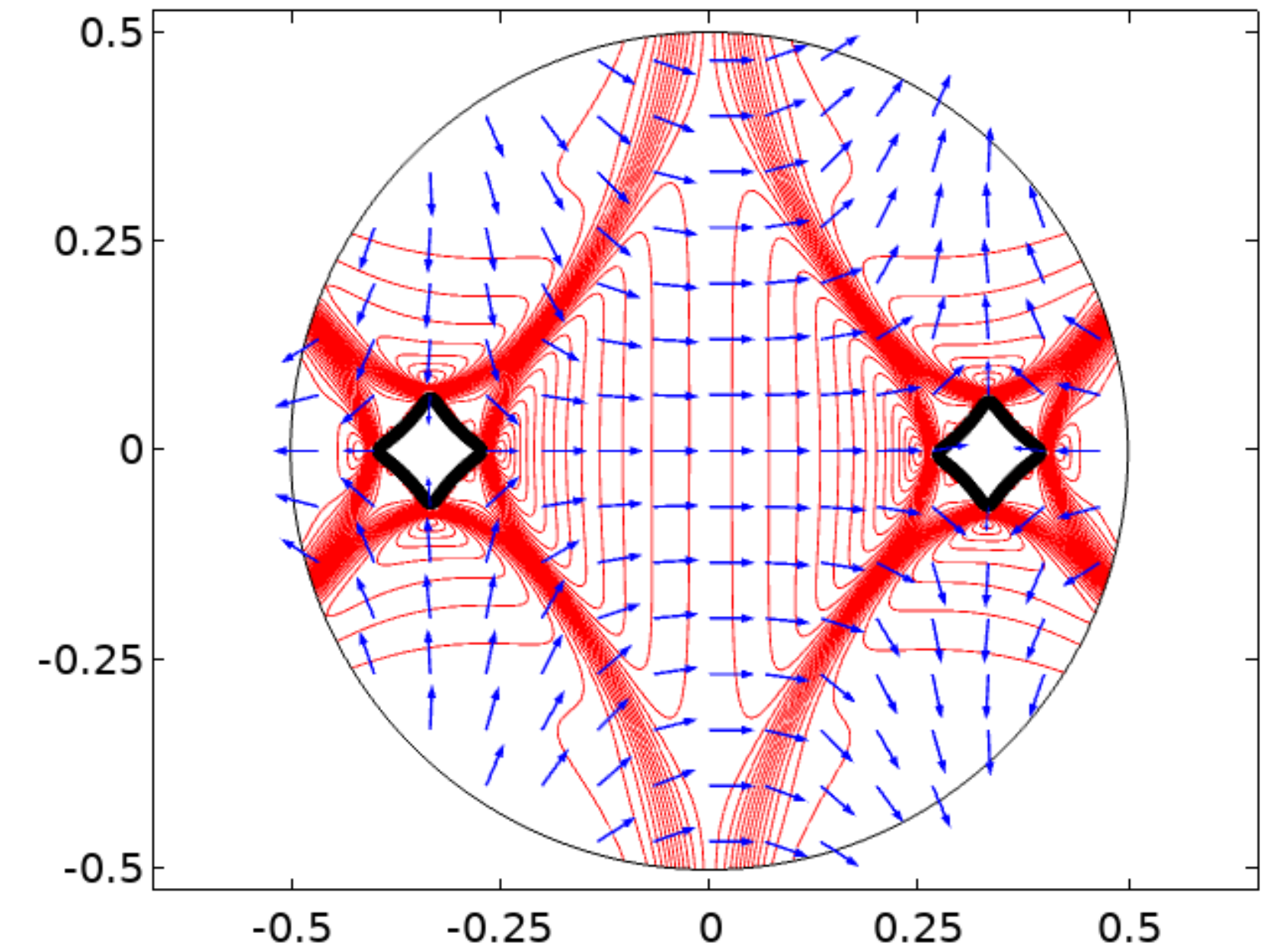}
  \caption{$L=0.1$, $\e=0.005$}
  \label{fig:tactoid4}
\end{subfigure}
\begin{subfigure}{.4\textwidth}
  \centering
  \includegraphics[width=1.05\linewidth]{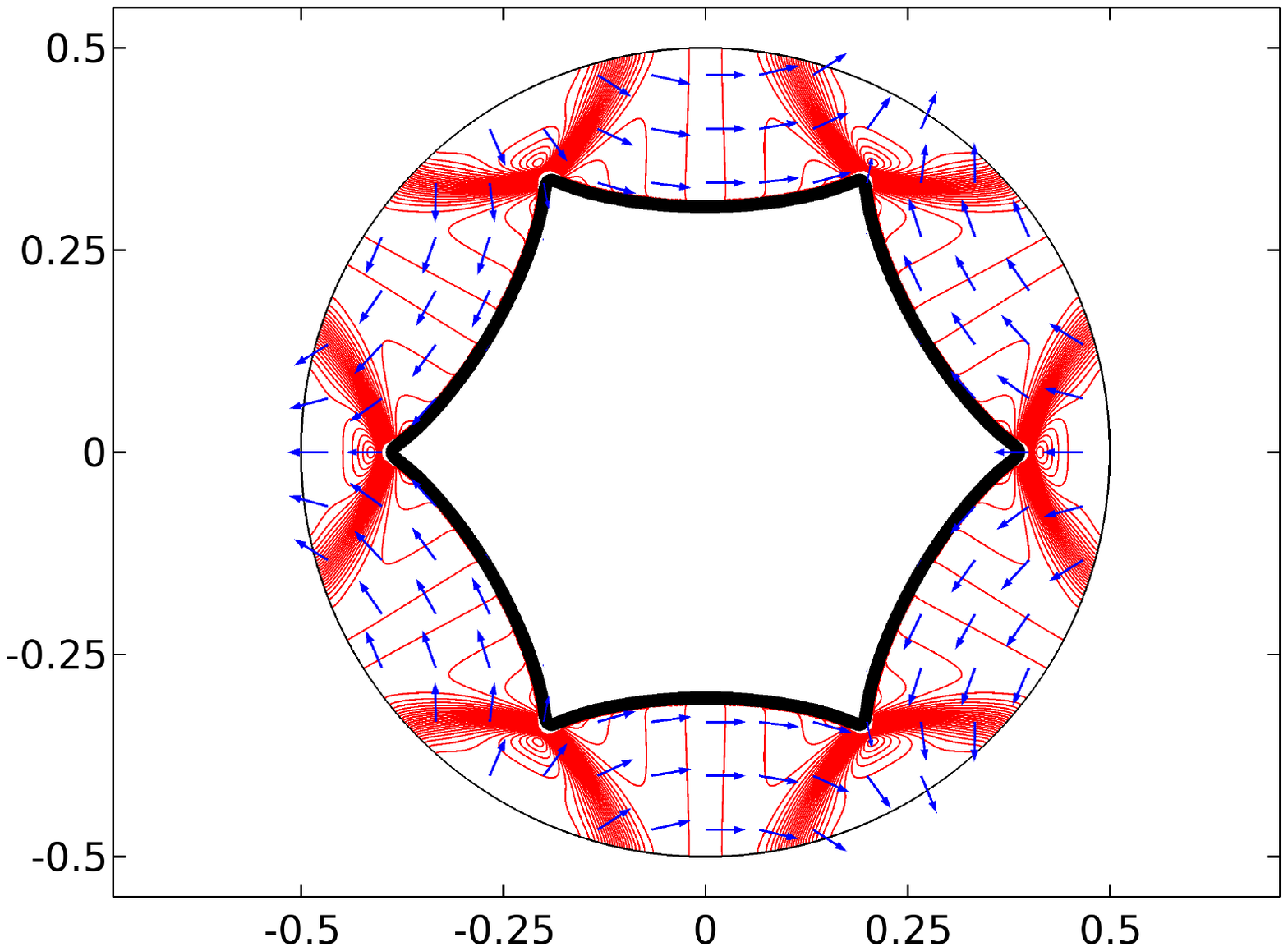}
  \caption{$L=0.1$, $\e=0.005$}
  \label{fig:tactoid5}
\end{subfigure}
\begin{subfigure}{.4\textwidth}
  \centering
  \includegraphics[width=\linewidth]{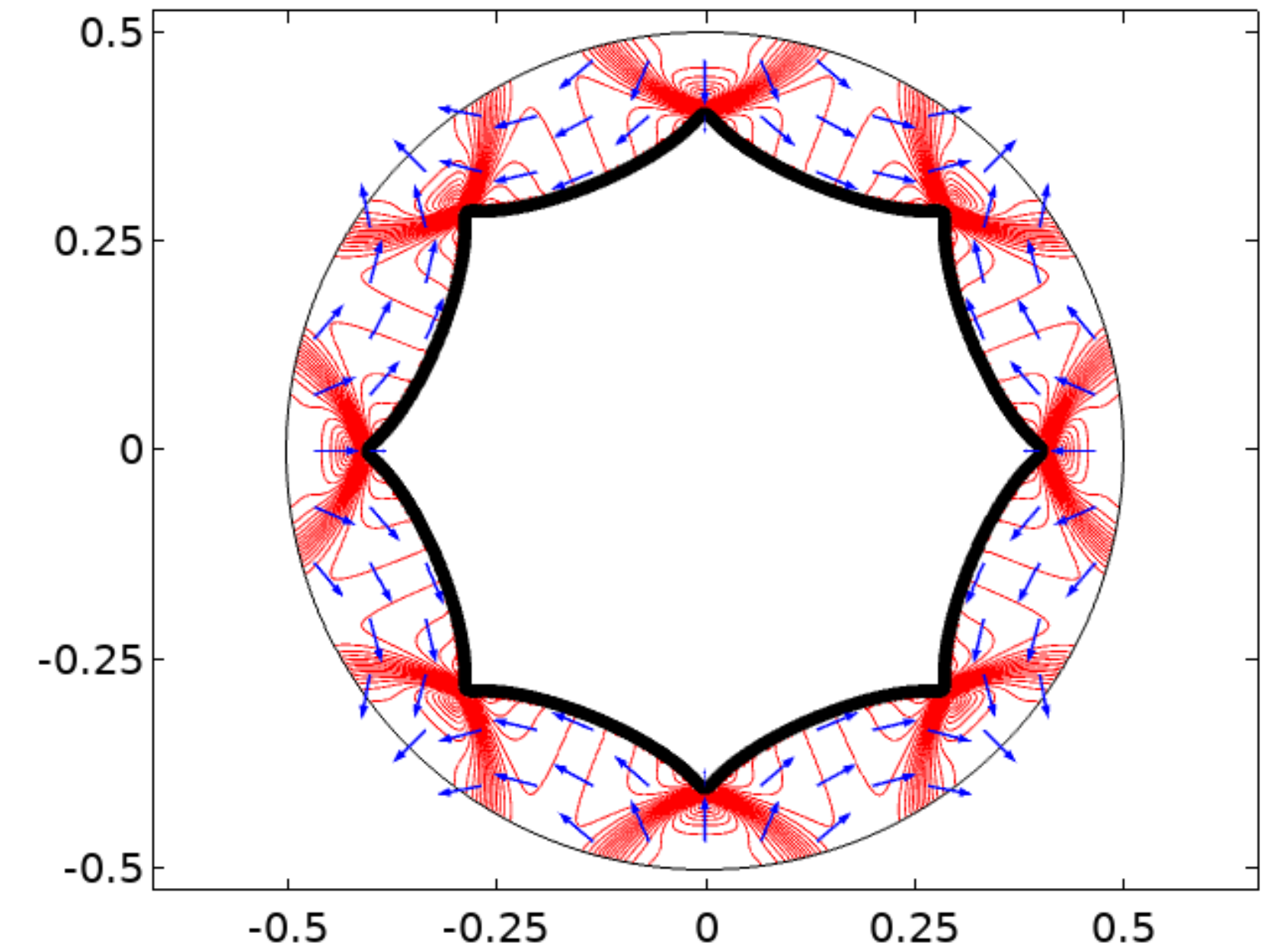}
  \caption{$L=0.04$, $\e=0.005$}
  \label{fig:tactoid6}
\end{subfigure}
\caption{Critical points of $E_\e$ for (a-c): $(k,\alpha)=(-2,\pi)$  and (d): $(k,\alpha)=(-3,\pi)$. The red curves represent level sets of $\mathrm{div}\,u$ while $|u|=0.5$ on the black curves that enclose the isotropic phase. The configurations (b) and (c) are obtained starting from different initial conditions. Configuration (b) has slightly lower energy in $E_\e$.}
\label{fig:deg2_neg}
\end{figure}

We now provide some analytical evidence that supports the observations  in Figs.~\ref{fig:deg1_neg}-\ref{fig:deg2_neg}. Motivated by the gradient flow simulations, we  construct critical points for $E_0^{\infty}$ and so divergence-free competitors for $E_0$. These constructions will have only interface, but no walls, with singular points of the interface always touching the boundary of the disk, though of course the numerics suggest that for $L$ finite, there should exist walls branching off the phase boundary singularities and attaching to $\partial\Omega$. 

\beg\upshape \label{astroid}\normalfont
In this example, $\Omega=B(0,1)$, and we are interested in competitors which exhibit the symmetry
\begin{equation}\label{kfold}
u\left(e^{\pi  i /(k+1)}x \right)=e^{-\pi k i/(k+1)}u(x)
\quad\mbox{for}\;k\in\mathbb{N}.\end{equation}
This is the symmetry exhibited by the configurations in Figs.~\ref{fig:deg1_neg}-\ref{fig:deg2_neg}. The construction will proceed by issuing characteristics off $\partial \Omega$ and by adhering to the condition \eqref{AVrecipe}.  

Owing to the condition \eqref{kfold}, we construct a critical point of $E_0^{\infty}$ in the sector $S$ corresponding to $[0,\pi/(k+1)]$ and then extend  the construction to the rest of the domain by symmetry. Shifting out of complex notation and parametrizing $\partial \Omega \cap \partial S$ by $(\cos s, \sin s)$ for $s \in [0,\pi/(k+1)]$, we will insist that $u_{|_{\partial \Omega}} = (-\cos ks, \sin ks),$ rather than just having agreement between the normal component of $u$ and that of the data. 

Integrating the characteristic equations then yields 
\begin{align} \label{chartactoid}
    &\left(x_1(s,t), x_2(s,t) \right) = (\cos s , \sin s) - t (\sin ks , \cos ks)\\
    &u\left( x_1(s,t), x_2(s,t) \right) = (- \cos ks, \sin ks),
\end{align}
with $s \in [0,\pi/(k+1)]$, $t \geqslant 0. $ We represent the interface in the form 
\[(p(s), q(s)) := \left( x_1(s,t(s)), x_2(s,t(s)) \right)
\]for an appropriate arrival time $t(s) \geqslant 0$. Here, for each $s$, a characteristic arrives at the interface at the time $t(s)$ and we require that $u$ at the point of arrival is tangent to the interface, that is
\begin{align*}
    \left( p^\prime(s), q^\prime(s) \right) = \alpha\, (u(p(s)), u(q(s))) \, \mbox{ for some } \alpha \in \R.
\end{align*}
Using this expression in \eqref{chartactoid}, we find that 
\begin{align*}
    x_1^\prime(s,t(s)) = -\sin s - t^\prime(s) \sin ks - kt(s) \cos ks &= - \alpha \cos ks, \\
    x_2^\prime(s,t(s)) = \cos s - t^\prime(s) \cos ks +k t(s) \sin ks &= \alpha \sin ks. 
\end{align*}
Upon rearrangement, we have
\begin{align*}
    -\sin s +\alpha \cos ks  - k t(s) \cos ks &= t^\prime(s) \sin ks, \\
     \cos s -\alpha \sin ks  + k t(s) \sin ks &= t^\prime(s) \cos ks. 
\end{align*}
Multiplying these equations by $\sin ks$ and $\cos ks$, respectively, and adding the results gives
\begin{align*}
   t^\prime =  - \sin s \sin ks + \cos s \cos ks= \cos (k+1)s. 
\end{align*}
Integration then yields 
\begin{align*}
    t(s) = \frac{1}{k+1}\sin (k+1)s + c.
\end{align*}
Motivated by numerics, we seek an interface that meets $\partial \Omega$ at $(1,0),$ so that $t(0) = 0.$ Then $c = 0$, so that $t(s) = \frac{1}{k+1}\sin (k+1)s.$
The parametric equation of the interface in the sector $S$ is now given by
\begin{align} \label{interfacep}
    p(s) &= \cos s - \frac{1}{k+1} \sin(k+1)s \sin ks = \left(1 - \frac{1}{2(k+1)}  \right) \cos s + \frac{1}{2(k+1)}\cos(2k+1)s, \\
    \label{interfaceq} q(s) &= \sin s - \frac{1}{k+1} \sin(k+1)s \cos ks=\left(1 - \frac{1}{2(k+1)}  \right) \sin s - \frac{1}{2(k+1)}\sin(2k+1)s . 
\end{align}
Extending the interface to all of $\Omega$ via the symmetry condition \eqref{kfold}, we obtain a closed curve with $2(k+1)$ evenly-spaced cusps. When $k = 1$ one checks that
$p(s)=\cos^3s$ and $q(s)=\sin^3s$ and the interface satisfies the equation $x_1^{2/3} + x_2^{2/3} = 1.$ In Fig. \ref{fig:astroidinfty} we compare the graph of this curve with the contour line $|u|=0.5$ for the critical point obtained numerically via gradient flow when $L=2$. It is clear that the two curves are very close to each other, which is quite striking since one might only expect a strong connection between the critical points of $E_\e$ and $E_0^{\infty}$ for $L$ large.
\begin{figure}
    \centering
    \includegraphics[scale=.4]{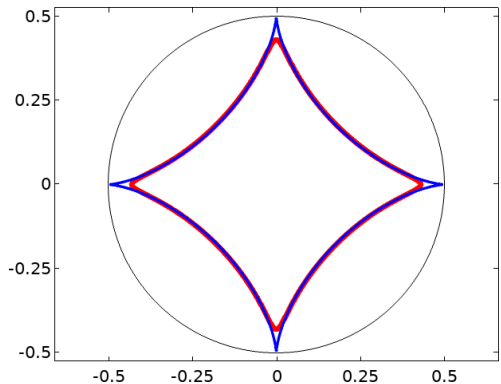}
    \caption{Contour line $|u|=0.5$ for the critical point with $(k,\alpha)=(-1,\pi)$ obtained via gradient flow (red) and the plot of $x_1^{2/3} + x_2^{2/3} = 1$ (blue).  Here $L=2$ and $\e=0.005$.}
    \label{fig:astroidinfty}
\end{figure}

One can also check that for the construction obtained above, the area of the isotropic island increases with $\abs{k}$. 
In fact, a calculation that we omit goes to show that in the $k\to \infty$ limit,
the isotropic region fills the entire disk!
\eeg

The preceding calculation can also be used in the case of $L < \infty$ in order to reconstruct parts of critical points of $E_0$. Recall that, in this case, by Corollary \ref{conservation}, the characteristics are circular arcs of finite radii that may run directly from the interface to $\partial \Omega$. In Fig.~\ref{fig:deg1_neg}, for example, red curves correspond to level sets of $\dive u$ and thus the characteristics for $u$ connect large portions of the interface to the boundary. In order to fully reproduce the critical point of $E_0$ completely, however, one needs to allow for the presence of walls, as evidenced by the gradient flow numerics in Fig.~\ref{fig:deg1_neg}. Although a similar approach yielded critical points of \eqref{limdivBBH} for degree $-1$ boundary data in \cite{GSV}, such a construction will be more elaborate here and we do not pursue this issue further in the present paper.

We conclude this section by considering the boundary data in \eqref{sec4bc} corresponding to $k$ positive and $\alpha=0$.
 The results of the gradient flow simulations are shown in Fig.~\ref{fig:deg2_pos}. Not surprisingly, when $L$ is small for $k=2$, the stable configuration consists of two degree one vortices looking locally like $\vec{e}_\theta$,  see Fig.~\ref{fig:tactoid7}. As $L$ increases, however, these vortices collapse onto  and spread along $\partial\Omega$ while forming two walls along the upper and lower halves of the boundary, respectively, cf. Fig.~\ref{fig:tactoid8}. Indeed this simulation suggests that for $E_0$ with $L$  large, the preferred state is $u\equiv\vec{e}_1$. 
In fact, if one tries to construct a competitor $u$ having a `boundary wall' for this boundary data, that is, a unit vector field such that the normal component of the data is met but the tangential component switches sign, then one finds 
\[
u\cdot\nu_{\partial\Omega}=(\cos 2s,\sin 2s)\cdot (\cos s,\sin s)=\cos s=\vec{e}_1 \cdot(\cos s,\sin s)\] 
and 
\[
u\cdot \tau_{\partial\Omega}=-(\cos 2s,\sin 2s)\cdot (-\sin s,\cos s)=-\sin s=\vec{e}_1 \cdot (-\sin s,\cos s).
\]
Thus such a competitor $u$ must have trace $\vec{e}_1$ along $\partial\Omega$ and there is no need then to accumulate divergence inside the disk by varying from constancy.
 
\begin{figure}
\centering
\begin{subfigure}{.57\textwidth}
  \centering
  \includegraphics[width=\linewidth]{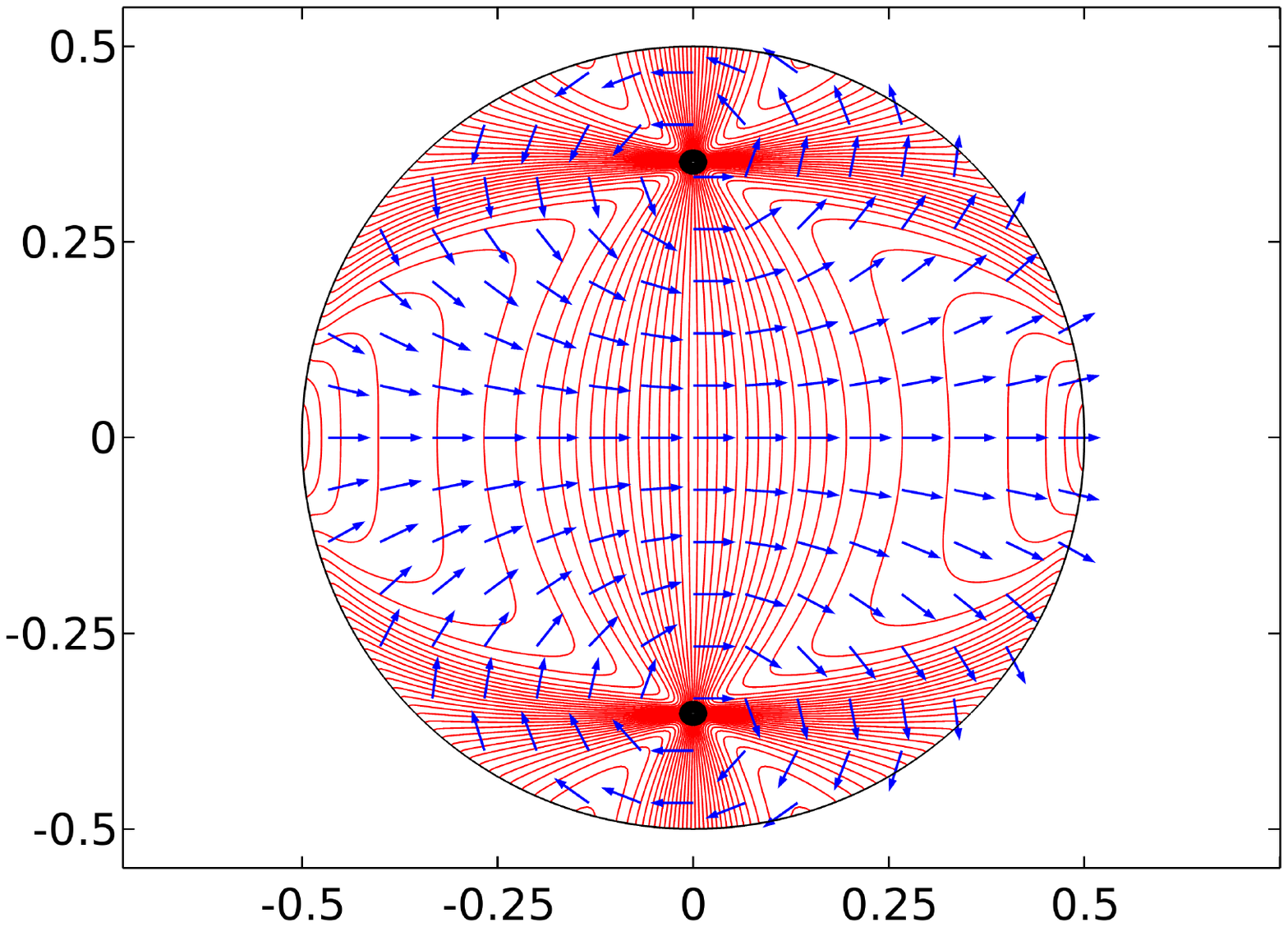}
  \caption{$L=0.04$, $\e=0.005$}
  \label{fig:tactoid7}
\end{subfigure}%
\begin{subfigure}{.5\textwidth}
  \centering
  \includegraphics[width=\linewidth]{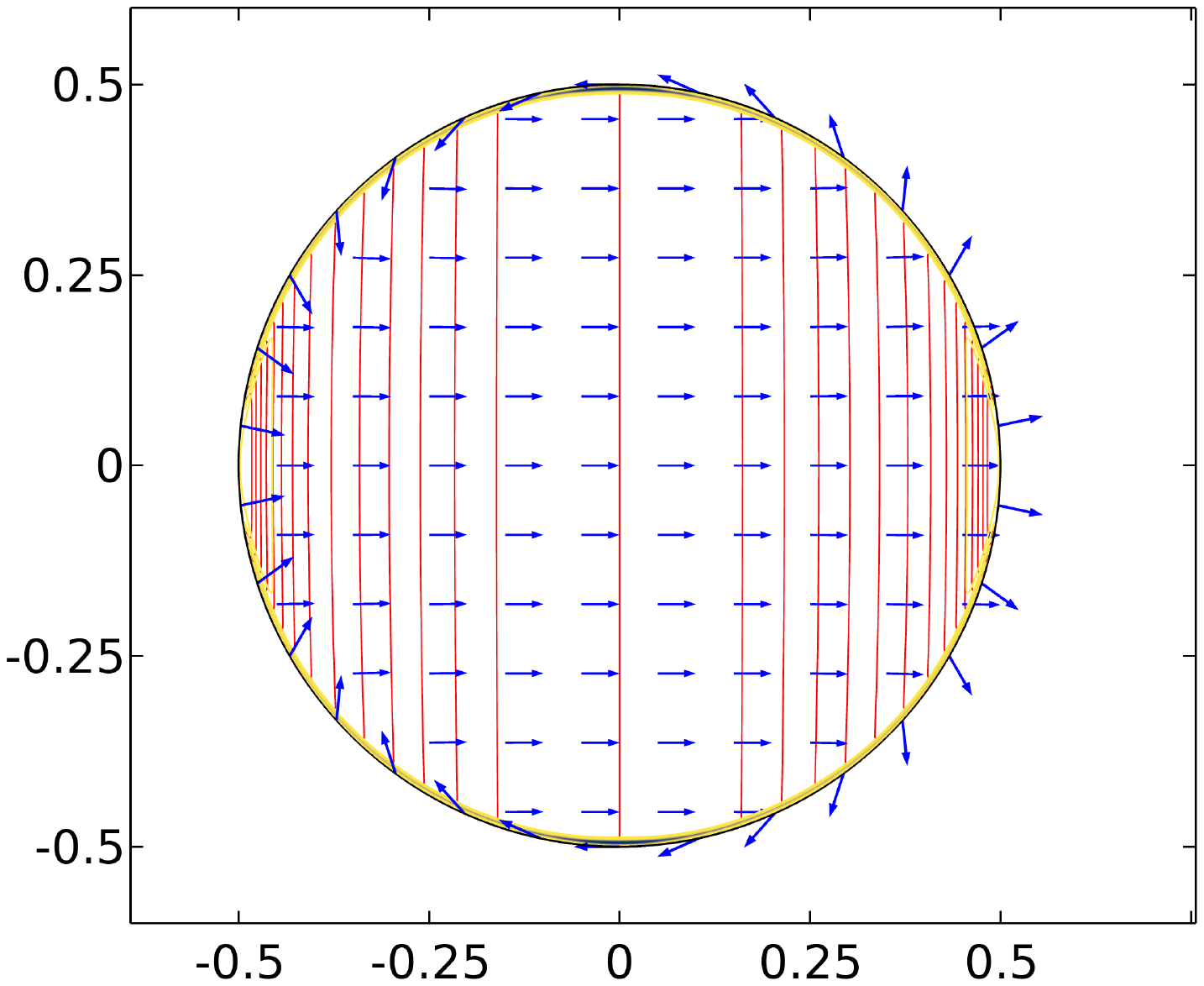}
  \caption{$L=10$, $\e=0.001$}
  \label{fig:tactoid8}
\end{subfigure}
\begin{subfigure}{.5\textwidth}
  \centering
  \includegraphics[width=1.05\linewidth]{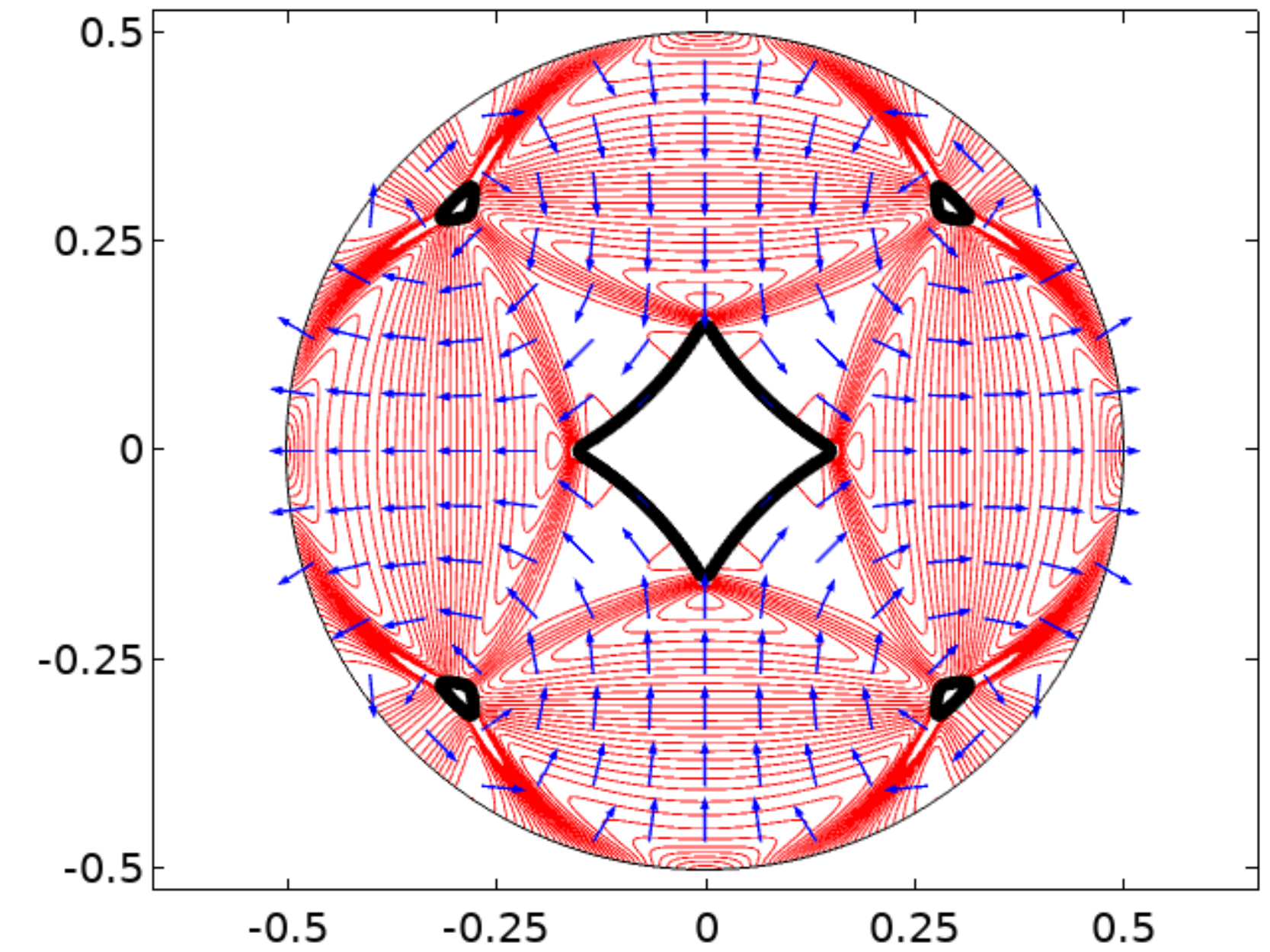}
  \caption{$L=0.04$, $\e=0.005$}
  \label{fig:tactoid9}
\end{subfigure}
\caption{Critical points of $E_\e$ for (a-b): $(k,\alpha)=(2,0)$  and (c): $(k,\alpha)=(3,0)$. The red curves represent level sets of $\mathrm{div}\,u$ while $|u|=0.5$ on the black curves that enclose the isotropic phase. The wall adjacent to the boundary in plot (b) is indicated in yellow.}
\label{fig:deg2_pos}
\end{figure}

In the case of a degree $3$ boundary data, cf. Figure \ref{fig:tactoid9}, the behavior is more complex---the degree $3$ vortex appears to split into four degree $1$ vortices and one degree $-1$ vortex. The four $+1$ vortices approach the boundary of the domain with an increasing $L$ while the degree $-1$ vortex remains at the center of the disk. 

We use the simulations in the case of $(k,\alpha)=(2,0)$ to test Conjecture \eqref{conjecture} on the one-dimensional character of the wall cost. The walls in this example turn out to be significantly deeper than in other cases that we considered and it is therefore easier to compare the numerically computed wall profiles with the corresponding heteroclinic connection. Consider the critical point for $E_\e$ depicted in Fig.~\ref{fig:tactoid8}. For a large value of $L$ the defects present inside $\Omega$ for small $\e$ spread along the boundary to form two boundary walls. Due to symmetry, it is sufficient to consider the wall in the first quadrant. Then along each ray emanating at angle $\theta$ from the origin, the wall connects the vector $\vec{e}_1=(1,0)$ to the vector $(\cos{2\theta},\sin{2\theta})$. Because the normal to the boundary/interface is $(\cos{\theta},\sin{\theta})$, the normal component of the vector field is continuous across the wall, while the tangential component reverses sign. The jump in the tangential component across the wall grows as $\theta$ changes from $0$ to $\pi/2$. In Fig.~\ref{fig:comp} we plot cross-sections of $|u|$ for the critical point of $E_\e$ shown in Fig.~\ref{fig:tactoid8}, where the cross-sections are shown along several rays $\theta=const$. In Fig.~\ref{fig:comp1}, 
\begin{figure}
\centering
\begin{subfigure}{.425\textwidth}
  \centering
  \includegraphics[width=\linewidth]{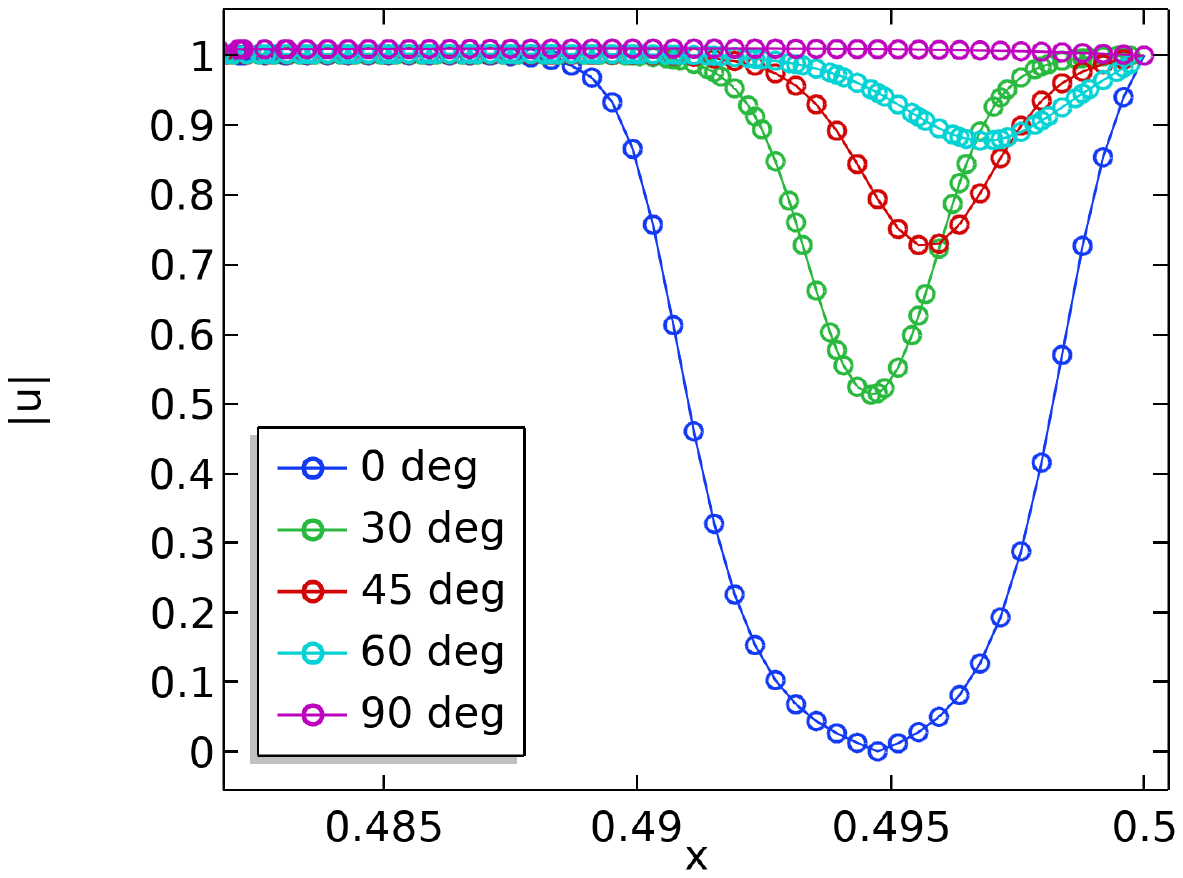}
  \caption{}
  \label{fig:comp}
\end{subfigure}%
\begin{subfigure}{.4\textwidth}
  \centering
  \includegraphics[width=\linewidth]{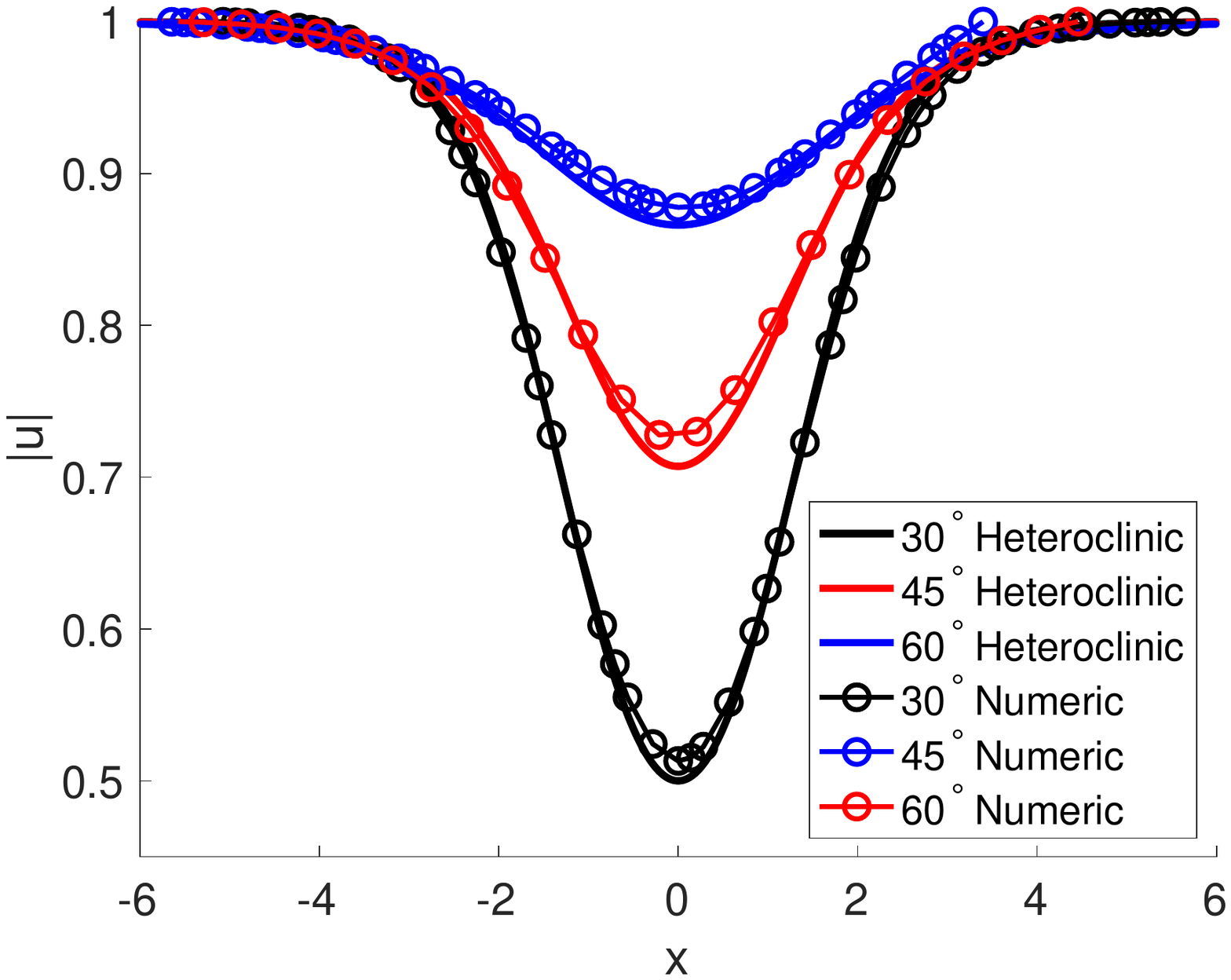}
  \caption{}
  \label{fig:comp1}
\end{subfigure}
\caption{Wall cost is one-dimensional for the critical point shown in Fig.~\ref{fig:tactoid8}. (a) Cross-sections of Fig.~\ref{fig:tactoid8} in the direction of angle $\theta$. Only parts of cross-sections closest to the boundary are shown; (b) Comparison between the numerical and analytical wall profiles.}
\label{fig:comp_sol}
\end{figure}
the same scaled and translated profiles are shown together with the corresponding solutions of the ODE that describes a heteroclinic connection associated with $E_\e$, assuming one-dimensional cost. As Fig.~\ref{fig:comp1} demonstrates, the graphs are close to each other for all respective values of $\theta$---this is in agreement with our conjecture that the cost is one-dimensional.

\subsection{Examples for Degree Zero Boundary Data}\label{eyeballs}
In this section, we analytically and numerically construct an example with an isotropic tactoid which exhibits two defects on the phase boundary. Let us describe a key feature of this example. Recall that at a nematic-isotropic interface for $E_0$, the trace of a $BV\cap H_\dive$ competitor $u$ from the nematic region is tangent to the interface, cf. \eqref{tanjump}. If, for example, $u$ is smooth and does not change the sense of tangency along the interface, then the degree of $u$ around any connected component of the interface is $1$. If we specify a degree $0$ boundary condition around $\partial \Omega$ or at infinity, this mismatch can be rectified by the presence of two defects along the interface, similar to the construction of the recovery sequence in Section \ref{firsttry}. This is the effect we will see in the following example.\par

We begin with some numerics. Fig.~\ref{walleye} shows the result of gradient flow simulation in a large rectangular domain with constant boundary data $\vec{e}_1$. We observe in Fig. \ref{walleye} that (i) the interface surrounds a single isotropic island, (ii) there appear to be two walls which intersect the two defects on the interface, and (iii) the solutions possess the symmetries
\begin{equation*}
(u_1(x_1,x_2),u_2(x_1,x_2)) = (u_1(x_1,-x_2),-u_2(x_1,-x_2))\end{equation*} and \begin{equation*} (u_1(x_1,x_2),u_2(x_1,x_2)) =(u_1(-x_1,x_2),-u_2(x_1,-x_2)).
\end{equation*}
Furthermore, in Fig. \ref{walleye} we see that (iv) the walls divide the plane into three regions, with
\begin{equation}\label{approach}
u\equiv \vec{e}_1 \textup{ in the two regions not containing the isotropic tactoid.}
\end{equation}

This simulation, though depicting transient behavior, leads us to seek a critical point of $E_0^{\infty}$ satisying (i)-(iv) consisting of an isotropic tactoid in an infinite sea of nematic, where in the far field, $u\to \vec{e}_1$. To induce a static--and presumably stable--critical point having an isotropic phase, we will enforce an area constraint of the form
$\abs{\{u=0\}}=const.$

The complication here--and it is a significant one--is that an interface and a wall are rigidly linked via the straight-line characteristics lying in between them, and the requirement of tangency of $u$ along the interface and agreement of the normal component of $u$ with that of $\vec{e}_1$ along the wall make the construction rather daunting. 

 Somewhat surprisingly, we are able to achieve this construction by deriving a formula of the form 
\begin{equation}\label{form}
E_0^\infty(u) = \int_{\partial\{\abs{u}=1\}} f(\theta) \, d\mathcal{H}^1
\end{equation}
for an explicit $f$, for competitors $u$ satisfying (i)--(iv), where $\theta$ is the angle the tangent vector to $\partial\{ |u|=1\}$ makes with the horizontal. For such $u$, the energy $E_0^\infty$ therefore only depends on the interface, a reflection of the afore-mentioned rigidity of this problem. We then consider variations of the interface to derive an ODE \eqref{ode} for $\theta$ along with the junction condition \eqref{simpjunction} at the intersection of walls and interfaces. Numerically integrating this ODE yields a configuration which closely resembles the results of the simulations shown in Figure \ref{walleye}, cf. Figure \ref{walleyeanalytical}.
\vskip.1in
\noindent
\underline{\textbf{The Role of Walls:}}
Before embarking on this construction, let us comment on the role of walls in this example. A natural question is: given these conditions, namely an area constraint on the isotropic region and the requirement that $u\equiv \vec{e}_1$ in the far field, is it necessary for a critical point to have walls? While we do not as yet have a proof, we believe the answer is yes. Let us present some heuristic arguments to this effect. Working within the symmetry assumption (iii), consider the possibility of constructing a competitor without walls. Then one of the two configurations depicted in Fig.~\ref{Explanationwalleye} are possible where the isotropic island either has two corners and no walls or it has two cusps and no walls. In the former case, one can show that partial vortices should form near the corners in the nematic phase, causing the divergence contribution to the energy to blow up; see Fig. \ref{nowalls} above.
If there are two cusps and no walls, Fig.~\ref{Explanationwalleye}  demonstrates that this is not possible as the characteristics emanating from the interface would have to intersect non-tangentially. In light of these observations, junctions between interfaces and walls appear to be fairly generic, making in particular the junction condition \eqref{junko} potentially important when analyzing candidates for possible critical points or minimizers when $L$ is finite.
\begin{figure}
\centering
\begin{subfigure}{.35\textwidth}
  \centering
  \includegraphics[width=\linewidth]{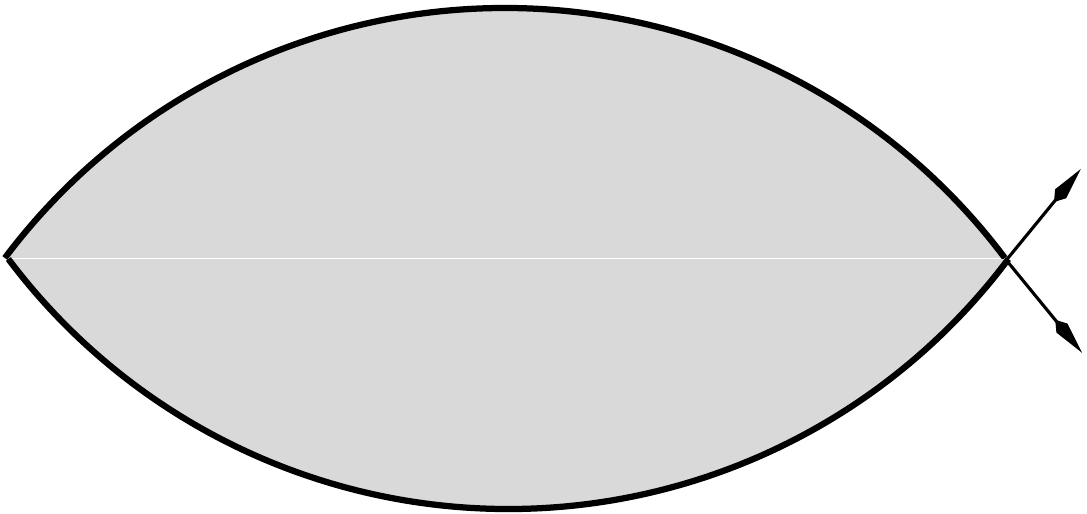}
\end{subfigure}\qquad\qquad
\begin{subfigure}{.4\textwidth}
  \centering
  \includegraphics[width=\linewidth]{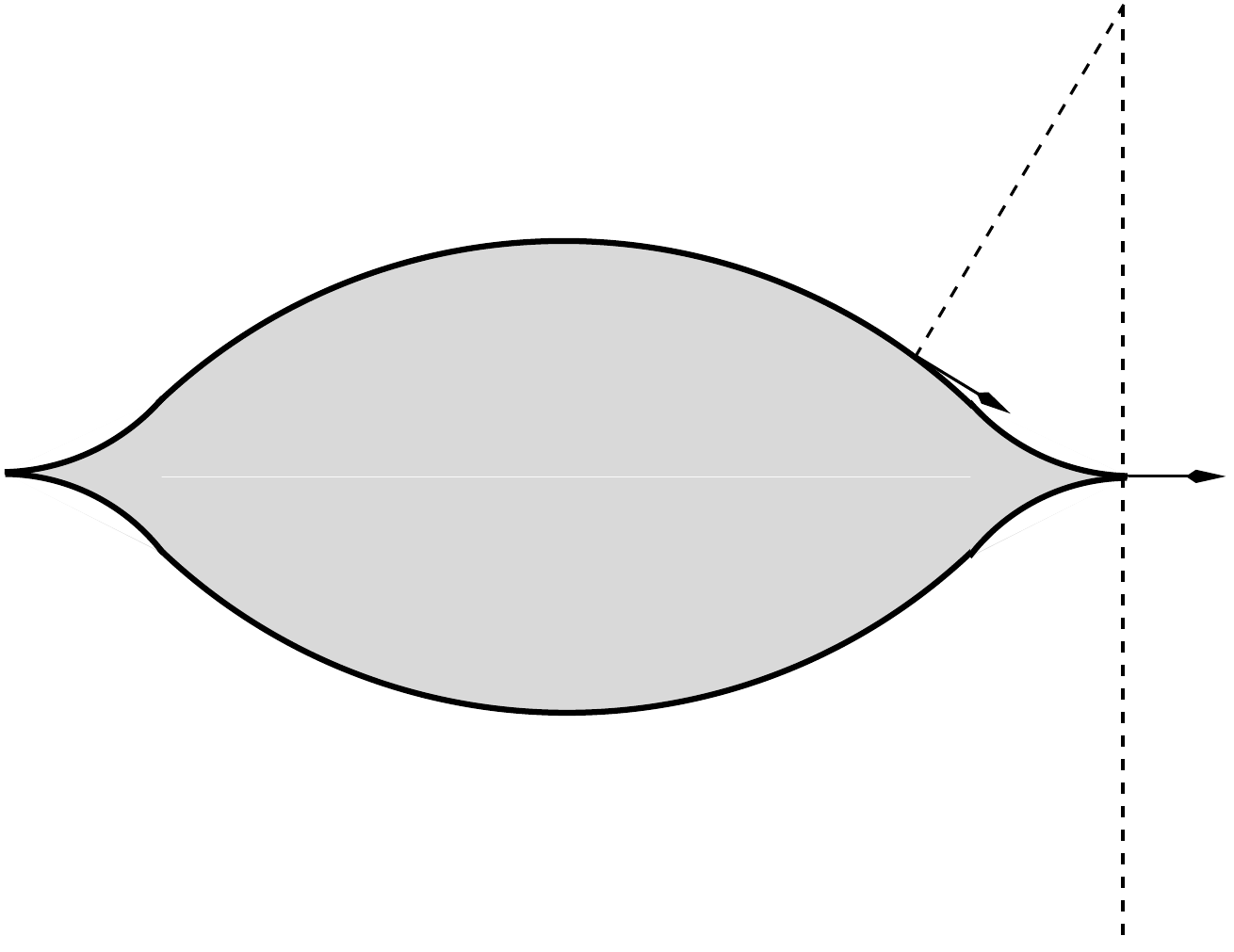}
\end{subfigure}
\caption{An isotropic island (marked in gray) surrounded by the nematic medium  in $\mathbb R^2$ without wall singularities and satisfying $u=\vec{e}_1$ at infinity. The vector field is tangent to the boundary of the island and switches the sense of tangency at the boundary singularities.  Left: the island has a corner and thus an infinite energy $E_0$ due to the bulk divergence term---cf. Fig.~\ref{nowalls}. Right: the island has a cusp---impossible without a wall since the characteristics will intersect. Characteristics are indicated by the dashed lines.}
\label{Explanationwalleye}
\end{figure}
\par
\beg\label{isotropiceyeball}\upshape
For this calculation, by (iii), it suffices to consider the problem in the first quadrant $Q_1$. Let us assume that 
$\partial\{|u|=1 \}\cap Q_1$ is smooth and can be parametrized by $r(\sigma)$ where $r:[0,L]\to Q_1$, with $r(0)$ on the $x_1$-axis and $r(L)$ on the $x_2$-axis; see Figure \ref{Eyeballcalccropped}.
\begin{figure}[h]
\centering
\includegraphics[scale=.8]{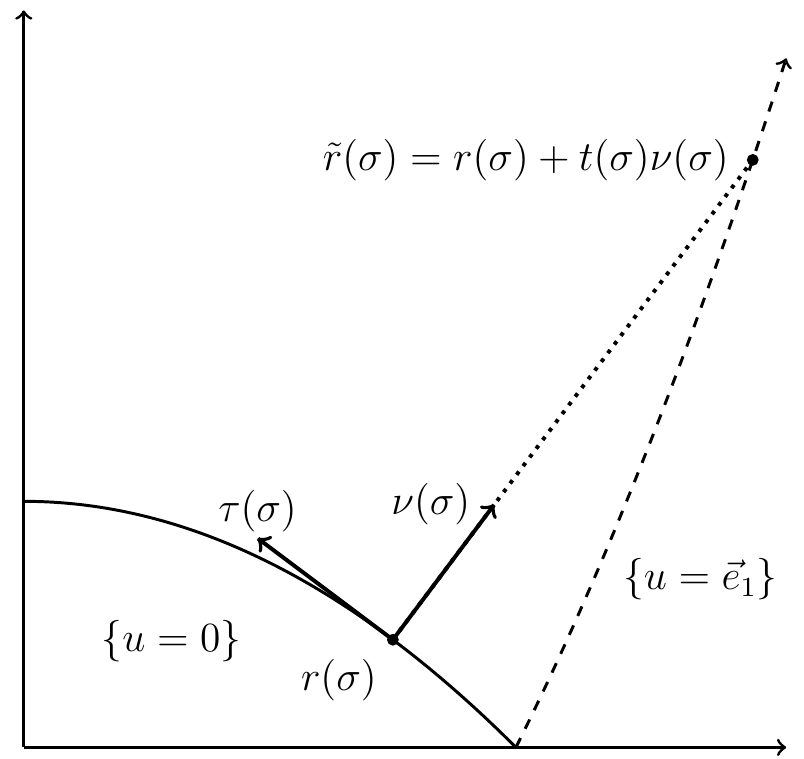}
\caption{The isotropic island is surrounded by the nematic region. The dotted line represents a straight line characteristic, and the dashed line represents the wall.}
 \label{Eyeballcalccropped}
\end{figure}
We do not assume that the interface is parametrized by the arclength variable $s$ in this derivation. Then
\begin{align}\label{r}
r'(\sigma)/|r'(\sigma)| = \tau(\sigma) = (\cos \theta(\sigma), \sin \theta(\sigma))=-u(r(\sigma)). 
\end{align}
Let us define 
\begin{equation*}
\rho(\sigma) = |r'(\sigma)|, 
\end{equation*}
and the normal vector
\begin{equation*}
\nu(\sigma) = (\sin \theta(\sigma),- \cos \theta(\sigma)).
\end{equation*}\par
We now deduce the location of the wall, which we will see is determined by the interface. Recall that in light of \eqref{AVrecipe}, $u$ is perpendicular to the straight characteristics, which themselves intersect the interface perpendicularly,  so we can parametrize the wall by shooting characteristics off of the interface until they hit the wall. We can write a parametrized path $\tilde{r}$ for the wall then as
\begin{equation}\label{tilder}
\tilde{r}(\sigma) := r(\sigma) + t(\sigma)\nu(\sigma).
\end{equation}
Hence by \eqref{r} the trace of $u$ on the wall from the left, denoted here by $\tilde{u}$, is given by
\begin{equation}\label{utilde}
    \tilde{u}(\sigma)=-(\cos \theta(\sigma), \sin \theta(\sigma)).
\end{equation}
We define a function $\psi$ by the equation
\begin{equation}\notag
    \tilde{r}'(\sigma) = |\tilde{r}'(\sigma)|(\cos \psi(\sigma),\sin \psi(\sigma)).
\end{equation}
Then the tangent and normal vectors to the wall are given by
\begin{equation}\label{nutilde}
    \tilde{\tau} = (\cos \psi, \sin \psi) \quad \textup{and}\quad \tilde{\nu} = (\sin \psi, - \cos \psi).
\end{equation}\par
Next, we collect some relations between several of the above quantities which will be useful in the following calculations. From the continuity of the normal traces across a wall, we have
\begin{equation}\notag
    \tilde{u} \cdot \tilde{\nu} = \vec{e}_1 \cdot \tilde{\nu},
\end{equation}
which we rewrite using \eqref{utilde}, \eqref{nutilde}, and the angle subtraction identity for $\sin$ as
\begin{equation}\label{sin}
\sin (\theta - \psi ) = \sin \psi.
\end{equation}
Similarly, the condition $\tilde{u} \cdot \tilde{\tau} = - \vec{e}_1 \cdot \tau$ for the tangential components across a wall, cf. \eqref{tanjump}, can be expressed as
\begin{equation}\label{cos}
    \cos(\theta - \psi ) = \cos \psi.
\end{equation}
From \eqref{sin}, \eqref{cos} it follows that $\psi = \theta/2 + k\pi$ for some integer $k$, and $k$ is in fact $0$ since at $\sigma=0$, $\psi(0) \leq \theta(0) \leq \psi(0) + \pi/2$. Thus
\begin{equation}\label{psi}
\psi = \theta /2.
\end{equation}\par
We can now write the energy $E_0(u,Q_1)$ in the first quadrant as
\begin{align}\notag
E_0(u,Q_1) &= \frac{K(0)}{2} \per_{Q_1}(\{|u|=1 \})+\int_{J_u \cap \{|u|=1 \}}\notag K(u \cdot \nu)\,d \mathcal{H}^1 \\ 
&=\int_0^L\left(\frac{K(0)}{2}|r'(\sigma)|+K(\vec{e}_1 \cdot \tilde{\nu}(\sigma))|\tilde{r}'(\sigma)| \right) \,d\sigma\label{ee}.
\end{align}
One can use \eqref{tilder} and the orthogonality of $\tau$ and $\nu$ to easily calculate $$|\tilde{r}'|=\left((\rho+t\theta')^2+(t')^2\right)^{1/2}.$$
Substituting this and $$\vec{e}_1 \cdot \nu = \sin \psi = \sin (\theta /2) $$into \eqref{ee} yields
\begin{align}\notag
    E_0(u,Q_1) &= \int_0^L\left(\frac{K(0)}{2}|r'|+K(\sin (\theta / 2))\left((\rho+t\theta')^2+(t')^2\right)^{1/2} \right) \,d\sigma\notag\\ &=\int_{\partial \{|u|=1\}}\frac{K(0)}{2}\,d\mathcal{H}^1+\int_0^L \left(K(\sin (\theta / 2))\left((\rho+t\theta')^2+(t')^2\right)^{1/2} \right) \,d\sigma.\label{e0}
\end{align}
In order to obtain a formula for $E_0$ depending only on the interface, it remains to simplify \begin{equation}\label{mess}
\left((\rho+t\theta')^2+(t')^2\right)^{1/2}.\end{equation}\par
We begin this simplification by finding an expression for $t$ in terms of $\theta$ and $\rho$. Using the definitions \eqref{r}, \eqref{tilder}, and \eqref{nutilde} for $r$, $\tilde{r}$, and $\tilde{\nu}$, respectively, along with \eqref{psi}, we calculate
\begin{samepage}
\begin{align}\notag
0 &= \tilde{r}' \cdot \tilde{\nu} \\ \notag
&=\left(r' + t' \nu + t\nu' \right) \cdot \tilde{\nu} \notag \\
&= \left[ \rho(\cos \theta, \sin \theta) + t'(\sin \theta, - \cos \theta)+ t \theta' (\cos \theta, \sin \theta)\right]  \cdot (\sin (\theta /2), - \cos (\theta /2))\label{dot}.
\end{align}
\end{samepage}
Expanding out \eqref{dot} and using the angle subtraction formulae for sine and cosine eventually gives
\begin{equation}\label{ode1}
t' \cos (\theta / 2) - (\rho + t\theta')\sin (\theta /2) =0.
\end{equation}
Now we observe from our symmetry assumption on $u$ that $u \equiv \vec{e}_1$ on the $x_2$-axis, so that $\theta(L) = \pi$. If we assume that $\theta(\sigma)$ does not reach $\pi$ until $\theta(L)=\pi$, which in terms of the interface means that \begin{equation}\label{pos1}
\textit{the tangent vector to the interface is not horizontal in the interior of }Q_1,
\end{equation}
then we can divide \eqref{ode1} by $\cos (\theta /2)$. This results in the following ODE for $t$:
\begin{equation}\label{odet}
    t' - \frac{\sin (\theta /2)}{\cos (\theta /2)} \theta ' t= \rho \tan (\theta /2).
\end{equation}
Multiplying both sides of \eqref{odet} by the integrating factor
\begin{equation}\notag
M = \exp\left(-2\displaystyle\int\frac{\sin (\theta /2)}{\cos (\theta /2)} \frac{\theta'}{2}\right) = \exp \left(2\ln (\cos (\theta /2)) \right)= \cos^2 (\theta /2) 
\end{equation}
results in
\begin{equation}\notag
(t \cos^2(\theta /2))' = \rho \sin(\theta /2) \cos(\theta/2)= \frac{1}{2} \rho \sin \theta.
\end{equation}
Integrating both sides, dividing by $\cos^2(\theta/2)$, and using the half angle formula for cosine, we obtain
\begin{equation}\label{tform}
t = \frac{1}{2\cos^2(\theta/2)} \int_0^\sigma \rho(y) \sin \theta(y)\, dy = \frac{1}{1+\cos \theta} \int_0^\sigma \rho \sin \theta \,dy.
\end{equation}
Finally, let us record the identity
\begin{equation}\label{abs}
     \rho + t\theta' = t' \frac{\cos(\theta/2)}{\sin(\theta/2)},
\end{equation}
which follows from rearranging \eqref{ode1}.\par
We now use the formula \eqref{tform} for $t$ to calculate \eqref{mess}, the quantity we set out to simplify. Let us assume that $t'>0$, which means that
\begin{equation}\label{pos2}
    \textit{the length of characteristics connecting the interface to the wall increases in }\sigma.
\end{equation}
Then plugging in \eqref{abs} for \eqref{mess} and using the assumptions \eqref{pos1} and \eqref{pos2}, namely $\theta/2 \leq \pi/2$ and $t'>0$, we write
\begin{align}\notag
\left((\rho+t\theta')^2+(t')^2\right)^{1/2} = \left((t')^2 \frac{\cos^2(\theta/2)}{\sin^2(\theta/2)} + (t')^2\right)^{1/2} = \frac{t'}{\sin (\theta /2)}.
\end{align}
Utilizing the formula \eqref{tform} to calculate $t'$ and then a half angle formula for cosine and a double angle formula for sine, we arrive at
\begin{align}\notag
\left((\rho+t\theta')^2+(t')^2\right)^{1/2} &= \frac{1}{\sin (\theta /2)} \left[ \frac{\rho\sin\theta}{1+\cos\theta}+\frac{\theta' \sin \theta}{(1+\cos \theta)^2}\int_0^\sigma \rho \sin \theta\,dy\right]\notag \\
&= \frac{\sin \theta}{2\sin (\theta /2) (1+\cos\theta)}\left[ \rho+\frac{\theta' }{1+\cos \theta}\int_0^\sigma \rho \sin \theta\,dy\right] \notag \\
&= \frac{2 \sin (\theta/2)\cos(\theta/2)}{2\sin (\theta /2) \cos^2(\theta/2)}\left[ \rho+\frac{\theta' }{1+\cos \theta}\int_0^\sigma \rho \sin \theta\,dy\right]\notag \\
&= \frac{1}{ \cos(\theta/2)}\left[ \rho+\frac{\theta' }{1+\cos \theta}\int_0^\sigma \rho \sin \theta\,dy\right]. \label{rtwid'}
\end{align}\par
Now we are ready to use the expression \eqref{rtwid'} for \eqref{mess} in the $E_0$ energy \eqref{e0}. We have
\begin{align}
E_0(u,Q_1)&=E_0(\rho,\theta)\notag \\ &= \int_{\partial\{|u|=1\}} \frac{K(0)}{2} \, d\mathcal{H}^1+ \int_0^L \left(\frac{K(\sin(\theta/2))}{\cos(\theta/2)} \left[\rho + \frac{\theta' }{1+\cos \theta}\int_0^\sigma \rho \sin \theta \,dy\right ]\right)\,d\sigma .\notag
\end{align}
We focus on the term 
\begin{equation}\label{line}
    \int_0^L \left(\frac{K(\sin(\theta/2))}{\cos(\theta/2)} \frac{\theta' }{1+\cos \theta}\int_0^\sigma \rho \sin \theta \,dy\right) \,d\sigma
.\end{equation}
Let us define the function $H(v)$ by the equations $$H'(v)=\frac{K(v)}{(1-v^2)^2},\quad H(0)=0.$$
It follows from \eqref{1d} that $H$ remains finite as $v$ approaches $1$ so long as $V(t)$ approaches $0$ as $t \nearrow 1$ at least as fast as $c(1-t^2)^p$ for some $p>1$ and $c>0$, an assumption which is satisfied by $W_{CSH}$. A straightforward calculation, which we omit, using the chain rule, the definition of $H'$, and some trigonometric identities yields
\begin{equation}\label{ident}
    (H(\sin (\theta/2)))' = \frac{K(\sin (\theta/2))\theta'}{\cos(\theta/2)(1+\cos \theta)}.
\end{equation}
Inserting this expression into the last integral in \eqref{line}, that term becomes
\begin{align}\label{insert}
    \int_0^L  (H(\sin (\theta /2)))' \left(\int_0^\sigma \rho \sin \theta \,dy\right) \, d\sigma,
\end{align}
which we integrate by parts to obtain
\begin{align}\notag
\left[H(\sin (\theta /2))\int_0^\sigma \rho \sin \theta\,dy\right]\Biggr|_0^L - \int_0^L H(\sin (\theta /2)) \rho \sin \theta \, d\sigma.
\end{align}
Note that by our symmetry assumptions, $\theta(L)=\pi$, so that
\begin{align}\notag
   \left[ H(\sin (\theta /2))\int_0^\sigma \rho \sin \theta \,dy \right]\Biggr|_0^L &- \int_0^L H(\sin (\theta /2)) \rho \sin \theta  d\sigma \\ &= H(1)\int_0^L \rho \sin \theta \, d\sigma - \int_0^L H(\sin (\theta /2)) \rho \sin \theta \, d\sigma.\label{ibp}
\end{align}We combine \eqref{ident}--\eqref{ibp} to rewrite \eqref{line}:
\begin{equation}\label{final}
    \int_0^L \left(\frac{K(\sin(\theta/2))}{\cos(\theta/2)} \frac{\theta' }{1+\cos \theta}\int_0^\sigma \rho \sin \theta\,dy \right) \,d\sigma = \int_0^L (H(1) - H(\sin (\theta /2))\rho\sin \theta \, d\sigma.
\end{equation}
Using the right hand side of \eqref{final} for \eqref{line}, we finally have
\begin{samepage}
\begin{align*}
 E_0(u,Q_1)&=E_0(\rho,\theta)\notag \\ &= \int_{\partial\{|u|=1\}} \frac{K(0)}{2} \, d\mathcal{H}^1+ \int_0^L \left(\frac{K(\sin(\theta/2))}{\cos(\theta/2)} \left[\rho + \frac{\theta' }{1+\cos \theta}\int_0^\sigma \rho \sin \theta\,dy \right ]\right)\,d\sigma \\
 &= \int_{\partial\{|u|=1\}} \frac{K(0)}{2} \, d\mathcal{H}^1 + \int_0^L \left(\frac{K(\sin(\theta/2))}{\cos(\theta/2)} +(H(1)-H(\sin(\theta/2))\sin \theta \right)\rho\,d\sigma\\
 &=:  \int_{\partial\{|u|=1\}} f(\theta) \, d\mathcal{H}^1.
\end{align*}
\end{samepage}
Thus we arrive at \eqref{form}.\par
We turn now to the criticality conditions for $\theta$. For any $u$ with smooth interface, we parametrize the interface of length $l$ by arclength $s$. Then the standard derivation \cite{GuAn} gives the following condition on the interface 
\begin{align}\label{ode}
    (f''(\theta)+f(\theta))\theta'+\lambda=0.
\end{align}
along with the the junction condition 
\begin{equation}\label{simpjunction}
    f'(\theta)\sin \theta - f(\theta) \cos \theta =0 
\end{equation}
at $s=0$, the intersection of $\partial \{|u|=1\}$ with the $x_1$-axis.

The solution of \eqref{ode}-\eqref{simpjunction} for $\lambda=1$ is depicted in Fig.~\ref{walleyeanalytical} and bears a strong resemblance to a configuration observed in gradient flow dynamics shown in Fg.~\ref{walleye}.
\eeg
\begin{figure}
\centering
\begin{subfigure}{.35\textwidth}
  \centering
  \includegraphics[width=\linewidth]{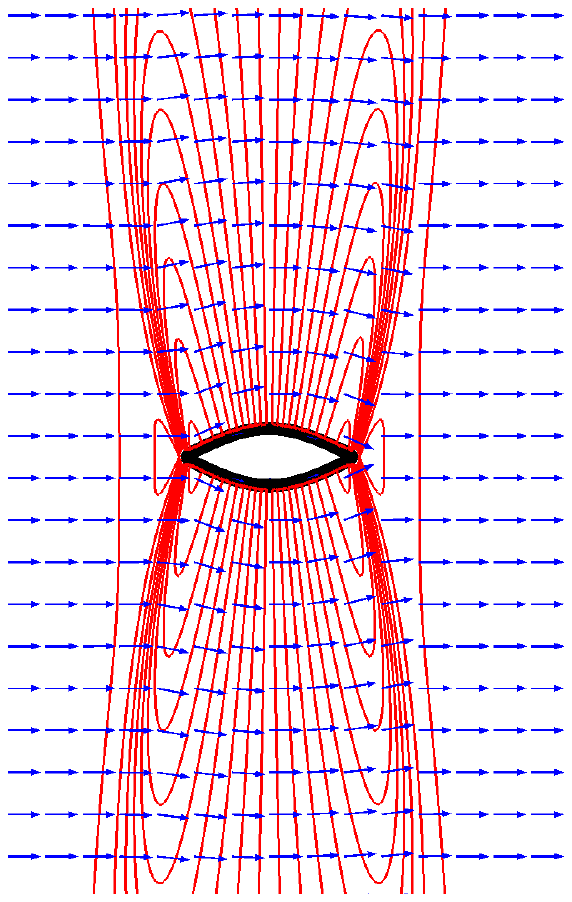}
  \caption{}
  \label{walleye}
\end{subfigure}\qquad
\begin{subfigure}{.35\textwidth}
  \centering
  \includegraphics[width=\linewidth]{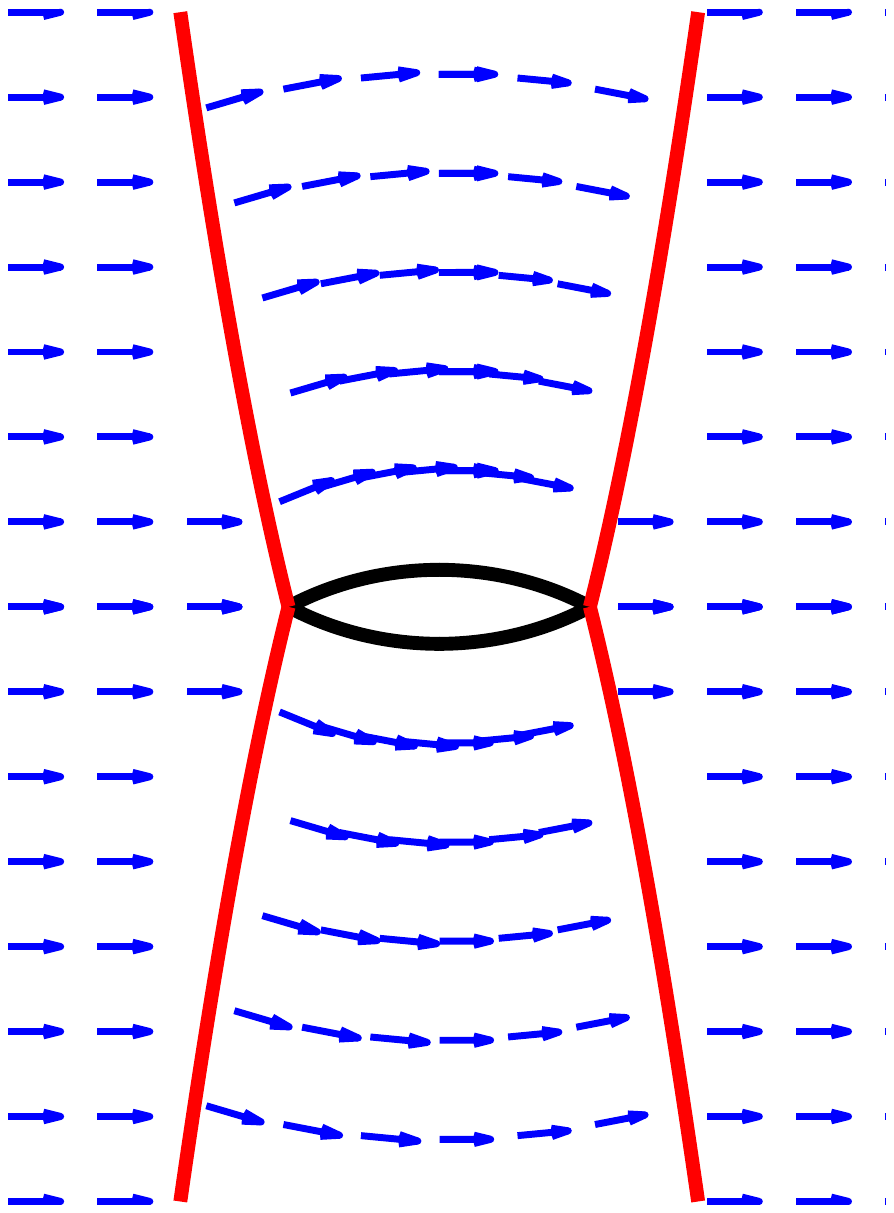}
  \caption{}
  \label{walleyeanalytical}
\end{subfigure}%
\caption{Isotropic island in $\mathbb R^2$ with $u=\vec{e}_1$ at infinity: (a) Gradient flow simulation in a large domain intended to represent $\mathbb R^2$. The isotropic region is shrinking and the solution shown is a transient. Here $L=10$, $\varepsilon=0.02$; (b) Solution of \eqref{ode}-\eqref{simpjunction} for $\lambda=1$.}
\label{TactoidDivergenceandwalleye}
\end{figure}

\section{Appendix}
We present here the proof of Theorem \ref{junction}. See Fig. \ref{Junctionfig} for a guide to the notation.

\begin{proof}
The derivation of \eqref{junk} follows the same general lines as those appearing in the proof of Theorem \ref{critthm2}. However,
a major complicating consideration is that it is no longer possible to assume that the deforming vector field $X$ is normal to all four curves $\Gamma_{ij}$ since they all meet at $p$. Instead we will have to incorporate tangential components of $X$ along these four curves as well. 

To this end, we assume simply that $X\in C_0^1(B(p,R);\R^2)$ and again introduce the map $\Psi$ via \eqref{odepsi}. We assume that each
$\Gamma_{ij}$ is smoothly parametrized by arclength through a map $r_{ij}:[0,s_0]\to\Gamma_{ij}$ for some $s_0>0$ with $r_{ij}(0)=p$. Then we replace
\eqref{hdefn} by 
\beq
X(r_{ij}(s))=\htan_{ij}(s)\tau_{ij}(s)+\hnor_{ij}(s)\nu_{ij}(s)\quad\mbox{for}\;s\in [0,s_0],\label{tannor}
\eeq
where 
\[
\htan_{ij}:=X(r_{ij}(s))\cdot\tau_{ij}(s)\quad\mbox{and}\quad \hnor_{ij}(s):=X(r_{ij}(s))\cdot\nu_{ij}(s).
\]
As a consequence of the compact support of $X$, we have that
\beq
\htan_{ij}(s_0)=\hnor_{ij}(s_0)=0\quad\mbox{for all functions}\;\htan_{ij}\;\mbox{and}\;\hnor_{ij}\label{zeroendpoint}
\eeq
but we stress that none of these functions is assumed to vanish at $s=0$, namely at the location of the junction $P$.
 
We now deform each region $\Omega_j$, for $j=0,1,2,3$ by the map $\Psi$ to form
four contiguous regions $\Omega_j^t:=\Psi(\Omega_j,t)$ and we deform the four boundary curves $\Gamma_{ij}$ to form
four new boundary curves $\Gamma_{ij}^t:=\Psi(\Gamma_{ij},t).$ Of course the junction point $P$ is also carried along by this flow.

The four curves $\Gamma_{ij}^t$ are parametrized by $s\mapsto \Psi(r_{ij}(s),t)$ which we denote by $r_{ij}^t(s)$ though $s$ no longer represents arclength.
Indeed one calculates that
\beq
r_{ij}^t(x)\sim r_{ij}(s)+t\big(\htan_{ij}(s)\tau_{ij}(s)+\hnor_{ij}(s)\nu_{ij}(s)\big)\label{newtan}
\eeq
from which it follows that
\beq
\abs{r_{ij}^t\,'(s)}\sim 1+t\big(\htan_{ij}\,'(s)-\hnor_{ij}(s)\kappa_{ij}(s)\big),\label{ijarc}
\eeq
where $\kappa_{ij}(s)$ denotes the curvature of $\Gamma_{ij}$ at $r_{ij}(s)$ (compare with \eqref{notarc}) and we have invoked the Frenet relations $\tau_{ij}'=\kappa_{ij}\nu_{ij}$ and $\nu_{ij}'=-\kappa_{ij}\tau_{ij}$. A related calculation
goes to show that the unit normal $\nu_{ij}^t$ to $\Gamma_{ij}^t$ is given by
\beq
\nu_{ij}^t\sim \nu_{ij}-t\big(\hnor_{ij}\,'+\kappa_{ij}\htan_{ij}\big)\tau_{ij}.\label{newnorm}
\eeq

Now in the ball $B(p,R)$ the unperturbed critical point is given by
\[
u(x)=\left\{\begin{matrix} 0&\;\mbox{for}\;x\in\Omega_0,\\
u_1(x)&\;\mbox{for}\;x\in\Omega_1,\\
u_2(x)&\;\mbox{for}\;x\in\Omega_2,\\
u_3(x)&\;\mbox{for}\;x\in\Omega_3\end{matrix}\right.
\]
and we wish to perturb it into a new function $u^t$ given by
\[
u^t(x)=\left\{\begin{matrix} 0&\;\mbox{if}\;x\in\Omega_0^t,\\
u_1^t(x)&\;\mbox{for}\;x\in\Omega_1^t,\\
u_2^t(x)&\;\mbox{for}\;x\in\Omega_2^t,\\
u_3^t(x)&\;\mbox{for}\;x\in\Omega_3^t\end{matrix}\right..
\]
To carry this out, as in the previous proof, we extend the domain of definition of $u_{j}$ to a neighborhood of $\Omega_{j}$ in such a way that the extension is constant along the normals to the  boundary of its original domain of definition. Then we introduce three functions
$\phi_1,\phi_2$ and $\phi_3$ such that
\beq
u_j^t(x)\sim u_j(x)+t\phi_j(x)u_j(x)^{\perp}\quad\mbox{for}\;x\in\Omega_{j}^t\;\mbox{and for}\;j=1,2,3\label{newutexp}
\eeq
so as to preserve the required $\mathbb{S}^1$-valued nature of $u_j^t.$

We must also take care to preserve the property $u^t\in H_\dive$ in the sense of \eqref{cnormal} and this requires that the following four conditions hold to $O(t)$ along
$\Gamma_{01},\Gamma_{12},\Gamma_{23}$ and $\Gamma_{03}$ respectively:
\begin{eqnarray}
&&u_1^t(r_{01}^t(s))\cdot\nu_{01}^t(s)=0,\qquad u_1^t(r_{12}^t(s))\cdot\nu_{12}^t(s)=u_2^t(r_{12}^t(s))\cdot\nu_{12}^t(s),\nonumber\\
&&u_2^t(r_{23}^t(s))\cdot\nu_{23}^t(s)=u_3^t(r_{23}^t(s))\cdot\nu_{23}^t(s),\qquad\mbox{and}\qquad u_3^t(r_{03}^t(s))\cdot\nu_{03}^t(s)=0\;
\mbox{for}\; s\in [0,s_0].\nonumber\\
&&\label{needhdiv}
\end{eqnarray}
We note that the first and last of these conditions implies at $t=0$ that either $u_1\equiv \tau_{01}$ or $\equiv-\tau_{01}$ along $\Gamma_{01}$ and
likewise either $u_3\equiv \tau_{03}$ or $\equiv-\tau_{03}$ along $\Gamma_{03}$. 

Substituting  \eqref{newtan} and \eqref{newnorm} into the four conditions of \eqref{needhdiv}, and expanding to $O(t)$, a tedious but straight-forward calculation leads to the following requirements relating the traces of the $\phi_j$ to $\hnor_{ij}\,'$:
\begin{eqnarray}
&&\phi_1\big(r_{01}(s)\big)=\hnor_{01}\,'(s),\label{h01}\\
&& \frac{1}{2}\bigg(\phi_1\big(r_{12}(s)\big)+\phi_2\big(r_{12}(s)\big)\bigg)=\hnor_{12}\,'(s),\label{h12}\\
&& \frac{1}{2}\bigg(\phi_2\big(r_{23}(s)\big)+\phi_3\big(r_{23}(s)\big)\bigg)=\hnor_{23}\,'(s),\label{h23}\\
&&\phi_3\big(r_{03}(s)\big)=\hnor_{03}\,'(s),\label{h03}
\end{eqnarray}
for $s\in [0,s_0].$

With the perturbations of the four curves $\Gamma_{ij}$ and three functions $u_j^t$ defined, we are ready to compute the variation of
$E_0$ in a neighborhood of the junction point $P$. Carrying out the calculation \eqref{jacint} in $\Omega_j$ for $j=1,2,3$ and then applying the divergence theorem we find with the aid of \eqref{graddiv} that
\begin{eqnarray*}
&&
 \frac{d}{dt}_{|_t=0}\sum_{j=1}^3\left(\int_{\Omega^t_j}(\dive\,u^t_j)^2\,dx\right)=\\
&&
-\int_{\Gamma_{01}}\bigg( (\dive\,u_1)^2\,\hnor_{01}+2(\dive\, u_1)(u_1\cdot\tau_{01})\phi_1            \bigg)\,ds\\
&&+\int_{\Gamma_{12}}\bigg( (\dive\,u_1)^2\,\hnor_{12}+2(\dive\, u_1)(u_1\cdot\tau_{12}  )\phi_1            \bigg)\,ds\\
&&
-\int_{\Gamma_{12}}\bigg( (\dive\,u_2)^2\,\hnor_{12}+2(\dive\, u_2)(u_2\cdot\tau_{12}  )\phi_2            \bigg)\,ds\\
&&+\int_{\Gamma_{23}}\bigg( (\dive\,u_2)^2\,\hnor_{23}+2(\dive\, u_2)(u_2\cdot\tau_{23}  )\phi_2            \bigg)\,ds\\
&&
-\int_{\Gamma_{23}}\bigg( (\dive\,u_3)^2\,\hnor_{23}+2(\dive\, u_3)(u_3\cdot\tau_{23}  )\phi_3            \bigg)\,ds\\
&&
-\int_{\Gamma_{03}}\bigg( (\dive\,u_3)^2\,\hnor_{03}+2(\dive\, u_3)(u_3\cdot\tau_{03})\phi_3,            \bigg)\,ds.
\end{eqnarray*}
where we have used the fact that $u_1^\perp\cdot\nu_{01}=u_1\cdot\tau_{01}$, $u_1^\perp\cdot\nu_{12}=u_1\cdot\tau_{12}$, etc.

Now we appeal to the relations \eqref{h01}--\eqref{h03}, along with the conditions $u_2\cdot\tau_{12}=-u_1\cdot\tau_{12}$ and
$u_3\cdot\tau_{23}=-u_2\cdot\tau_{23}$ and perform an integration by parts to find
\begin{eqnarray}
&&
 \frac{d}{dt}_{|_t=0}\sum_{j=1}^3\left(\int_{\Omega^t_j}(\dive\,u^t_j)^2\,dx\right)=\nonumber\\
&&
\int_{\Gamma_{01}}\bigg\{ -(\dive\,u_1)^2+2(\dive\,u_1)'(u_1\cdot\tau_{01}) \bigg\}\hnor_{01}\,ds+\nonumber\\
&&\int_{\Gamma_{12}}\bigg\{\bigg((\dive\,u_1)^2- (\dive\,u_2)^2-4\big[(\dive\,u_2)'(u_1\cdot\tau_{12})+
(\dive\,u_2)(u_1\cdot\tau_{12})'\big]\bigg)\hnor_{12}+2(\dive\, u_1-\dive\,u_2)(u_1\cdot\tau_{12}  )\phi_1            \bigg\}\,ds\nonumber\\
&&\int_{\Gamma_{23}}\bigg\{\bigg((\dive\,u_2)^2- (\dive\,u_3)^2-4\big[(\dive\,u_3)'(u_2\cdot\tau_{23})+
(\dive\,u_3)(u_2\cdot\tau_{23})'\big]\bigg)\hnor_{23}+2(\dive\, u_2-\dive\,u_3)(u_2\cdot\tau_{23}  )\phi_2            \bigg\}\,ds\nonumber\\
&&\int_{\Gamma_{03}}\bigg\{ -(\dive\,u_3)^2+2(\dive\,u_3)'(u_3\cdot\tau_{03}) \bigg\}\hnor_{03}\,ds+\nonumber\\
&&\nonumber\\
&&+2\,(\dive\,u_1(p))(u_1(p)\cdot\tau_{01}(0)) \hnor_{01}(0)
-4\,\dive\,u_2(p)(u_1(p)\cdot\tau_{12}(0))\hnor_{12}(0)\nonumber\\
&&\nonumber\\
&&-4\,\dive\,u_3(p)(u_2(p)\cdot\tau_{23}(0))\hnor_{23}(0)+2\,(\dive\,u_3(p))(u_3(p)\cdot\tau_{03}(0)) \hnor_{03}(0).\label{finaldiv}\end{eqnarray}

We turn now to calculating the variations of the four jump energies. 
We begin by invoking \eqref{ijarc} to compute
\begin{eqnarray*}
&&
 \frac{d}{dt}_{|_{t=0}}\bigg(\int_{\Gamma_{01}^t}1\,ds+\int_{\Gamma_{03}^t}1\,ds\bigg)\\
 &&= \frac{d}{dt}_{|_{t=0}}\bigg(\int_0^{s_0}\abs{r_{01}^t\,'(s)}\,ds+\int_0^{s_0}\abs{r_{03}^t\,'(s)}\,ds\bigg)\\
 &&=\frac{d}{dt}_{|_{t=0}}\bigg(\int_0^{s_0}1+t\big(\htan_{01}\,'(s)-\hnor_{01}(s)\kappa_{01}(s)\big)\,ds+\int_0^{s_0}1+t\big(\htan_{03}\,'(s)-\hnor_{03}(s)\kappa_{03}(s)\big)\,ds\bigg)\\
 &&\int_0^{s_0}\big(\htan_{01}\,'(s)-\hnor_{01}(s)\kappa_{01}(s)\big)\,ds+\int_0^{s_0}\big(\htan_{03}\,'(s)-\hnor_{03}(s)\kappa_{03}(s)\big)\,ds
 \end{eqnarray*}
Thus,
\begin{eqnarray}
&&\frac{d}{dt}_{|_{t=0}}\frac{K(0)}{2} \bigg(\mathcal{H}^1(\Gamma_{01}^t)+\mathcal{H}^1(\Gamma_{03}^t)\bigg)=\nonumber\\
&&-\frac{K(0)}{2} \bigg(\int_{\Gamma_{01}}\hnor_{01}\kappa_{01}\,ds+\int_{\Gamma_{03}}\hnor_{03}\kappa_{03}\,ds+\htan_{01}(0)+\htan_{03}(0)\bigg).
\label{firstthird}
\end{eqnarray}
To compute the variation in the jump energies over $\Gamma_{12}^t$ and $\Gamma_{23}^t$ requires an expansion to $O(t)$ of the
quantities $u^t\cdot\nu_{12}^t$ and $u^t\cdot\nu_{23}^t.$ Substituting the expression for $r_{12}^t$  from \eqref{newtan} into the formula for
$u_1^t$ from \eqref{newutexp} and Taylor expanding in $t$ we find with the use of \eqref{newnorm} that along $\Gamma_{12}^t$ we have
\begin{eqnarray}
&&
u^t\cdot\nu_{12}^t\sim \bigg(u_1(r_{12}^t(s))+tu_1^\perp(r_{12}(s))\phi_1(r_{12}(s))\bigg)\cdot\nu_{12}^t\sim\\
&&
\bigg(u_1\big(r_{12}+t\left[\htan_{12}\tau_{12} +\hnor_{12}\nu_{12}\right]\big)+t\phi_1(r_{12})u_1^\perp(r_{12})\bigg)\cdot
\bigg(\nu_{12}-t\big(\hnor_{12}\,'+\kappa_{12}\htan_{12}\big)\tau_{12}\bigg)\nonumber\\
&&
\sim u_1\cdot\nu_{12}+t\bigg[ \left(\phi_1-\hnor_{12}\,'-\kappa_{12}\htan_{12}\right)(u_1\cdot\tau_{12})+\htan_{12}(u_1'\cdot\nu_{12})\bigg],\label{jump12}
\end{eqnarray}
where $u_1$ and $\phi_1$ in the expression above are evaluated at $x=r_{12}(s)$ and $u_1'=\frac{d}{ds}u_1(r_{12}(s)).$ In the last line we have also used that our extension of $u_1$ was constant along $\nu_{12}$ to eliminate the term $\nabla u_1\cdot \nu_{12}$ that would other have been present upon Taylor expanding. 

Similarly, we calculate that along $\Gamma_{23}$ we have
\beq
u^t\cdot\nu_{23}^t\sim
u_2\cdot\nu_{23}+t\bigg[ \left(\phi_2-\hnor_{23}\,'-\kappa_{23}\htan_{23}\right)(u_2\cdot\tau_{23})+\htan_{23}(u_2'\cdot\nu_{23})\bigg].\label{jump23}
\eeq

From \eqref{jump12} and \eqref{jump23}, along with \eqref{ijarc} we can compute that
\begin{eqnarray}
&&\frac{d}{dt}_{|_{t=0}}\bigg(\int_{\Gamma_{12}}K\big(u^t\cdot\nu_{12}^t\big)\,ds+\int_{\Gamma_{23}}K\big(u^t\cdot\nu_{23}^t)\,ds\bigg)\nonumber\\
&&=\frac{d}{dt}_{|_{t=0}}\bigg(\int_0^{s_0}K\big(u^t(r_{12}^t(s))\cdot\nu_{12}^t(s)\big)\abs{r_{12}^t\,'(s)}\,ds+
\int_0^{s_0}K\big(u^t(r_{23}^t(s))\cdot\nu_{23}^t(s)\big)\abs{r_{23}^t\,'(s)}\,ds\bigg)\nonumber\\
&&=
\int_{\Gamma_{12}} K(u_1\cdot\nu_{12})\big(\htan_{12}\,'-\hnor_{12}\kappa_{12}\big)\,ds+\nonumber\\
&&
\int_{\Gamma_{12}} K'(u_1\cdot\nu_{12})\bigg(\big(\phi_1-\hnor_{12}\,'-\htan_{12}\kappa_{12}\big)(u_1\cdot\tau_{12})+
\htan_{12}(u_1'\cdot\nu_{12})\bigg)\,ds+\nonumber\\
&&
\int_{\Gamma_{23}} K(u_2\cdot\nu_{23})\big(\htan_{23}\,'-\hnor_{23}\kappa_{23}\big)\,ds+\nonumber\\
&&
\int_{\Gamma_{23}} K'(u_2\cdot\nu_{23})\bigg(\big(\phi_2-\hnor_{23}\,'-\htan_{23}\kappa_{23}\big)(u_2\cdot\tau_{23})+
\htan_{23}(u_2'\cdot\nu_{23})\bigg)\,ds.\nonumber\label{jump1234}
\end{eqnarray}

Now since 
\[
\frac{d}{ds}\big[K\big(u_1\cdot\nu_{12})\big]=(u_1'\cdot\nu_{12})-\kappa_{12}(u_1\cdot\tau_{12})
\]
and
\[
\frac{d}{ds}\big[K\big(u_2\cdot\nu_{23})\big]=(u_2'\cdot\nu_{23})-\kappa_{23}(u_2\cdot\tau_{23}),
\]
we have that
\[
K(u_1\cdot\nu_{12})\htan_{12}\,'+K'(u_1\cdot\nu_{12})\big(         (u_1'\cdot\nu_{12})-\kappa_{12}(u_1\cdot\tau_{12})          \big)\htan_{12}=\frac{d}{ds}\big[K\big(u_1\cdot\nu_{12})\htan_{12}\big]
\]
and
\[
K(u_2\cdot\nu_{23})\htan_{23}\,'+K'(u_2\cdot\nu_{23})\big(         (u_2'\cdot\nu_{23})-\kappa_{23}(u_2\cdot\tau_{23})          \big)\htan_{23}=\frac{d}{ds}\big[K\big(u_2\cdot\nu_{23})\htan_{23}\big].
\]
Using these last two identities in \eqref{jump1234} and integrating by parts implies that
\begin{eqnarray}
&&\frac{d}{dt}_{|_{t=0}}\bigg(\int_{\Gamma_{12}}K\big(u^t\cdot\nu_{12}^t\big)\,ds+\int_{\Gamma_{23}}K\big(u^t\cdot\nu_{23}^t)\,ds\bigg)\nonumber\\
&&=
-\int_{\Gamma_{12}} K(u_1\cdot\nu_{12})\hnor_{12}\kappa_{12}\,ds+
\int_{\Gamma_{12}} K'(u_1\cdot\nu_{12})(\phi_1-\hnor_{12}\,')(u_1\cdot\tau_{12})\,ds\nonumber\\
&&-\int_{\Gamma_{23}} K(u_2\cdot\nu_{23})\hnor_{23}\kappa_{23}\,ds+
\int_{\Gamma_{23}} K'(u_2\cdot\nu_{23})(\phi_2-\hnor_{23}\,')(u_2\cdot\tau_{23})\,ds\nonumber\\
&&-K\big(u_1(p)\cdot\nu_{12}(0))\htan_{12}(0)-K\big(u_2(p)\cdot\nu_{23}(0))\htan_{23}(0).
\label{jump1223}
\end{eqnarray}
Then invoking the criticality condition \eqref{natbc} from Theorem \ref{critthm1} and integrating by parts we can rewrite this identity as
\begin{eqnarray}
&&\frac{d}{dt}_{|_{t=0}}\bigg(\int_{\Gamma_{12}}K\big(u^t\cdot\nu_{12}^t\big)\,ds+\int_{\Gamma_{23}}K\big(u^t\cdot\nu_{23}^t)\,ds\bigg)\nonumber\\
&&=
-\int_{\Gamma_{12}} K(u_1\cdot\nu_{12})\hnor_{12}\kappa_{12}\,ds+
L\int_{\Gamma_{12}} \big(  \dive\,u_2-\dive\,u_1   \big)(\phi_1-\hnor_{12}\,')(u_1\cdot\tau_{12})\,ds\nonumber\\
&&-\int_{\Gamma_{23}} K(u_2\cdot\nu_{23})\hnor_{23}\kappa_{23}\,ds+
L\int_{\Gamma_{23}} \big( \dive\,u_3-\dive\,u_2 \big)(\phi_2-\hnor_{23}\,')(u_2\cdot\tau_{23})\,ds\nonumber\\
&&-K\big(u_1(p)\cdot\nu_{12}(0))\htan_{12}(0)-K\big(u_2(p)\cdot\nu_{23}(0))\htan_{23}(0)\nonumber\\
&&=
\int_{\Gamma_{12}}\bigg\{ L \big(  \dive\,u_2-\dive\,u_1   \big)'(u_1\cdot\tau_{12})+L\big(  \dive\,u_2-\dive\,u_1   \big)(u_1\cdot\tau_{12})'-K(u_1\cdot\nu_{12})\kappa_{12}\bigg\}\hnor_{12}\,ds\nonumber\\
&&+
L\int_{\Gamma_{12}} \big(  \dive\,u_2-\dive\,u_1   \big)(u_1\cdot\tau_{12})\phi_1\,ds\nonumber\\
&&\int_{\Gamma_{23}}\bigg\{L  \big(  \dive\,u_3-\dive\,u_2   \big)'(u_2\cdot\tau_{23})+L\big(  \dive\,u_3-\dive\,u_2   \big)(u_2\cdot\tau_{23})'-K(u_2\cdot\nu_{23})\kappa_{23}\bigg\}\hnor_{23}\,ds\nonumber\\
&&+
L\int_{\Gamma_{23}} \big(  \dive\,u_3-\dive\,u_2   \big)(u_2\cdot\tau_{23})\phi_2\,ds\nonumber\\
&&-K\big(u_1(p)\cdot\nu_{12}(0))\htan_{12}(0)+L\big(  \dive\,u_2(p)-\dive\,u_1(p)   \big)(u_1(p)\cdot\tau_{12}(0))\hnor_{12}(0)\nonumber\\
&&-K\big(u_2(p)\cdot\nu_{23}(0))\htan_{23}(0)+L\big(  \dive\,u_3(p)-\dive\,u_2(p)   \big)(u_2(p)\cdot\tau_{23}(0))\hnor_{23}(0).\label{newjump1223}
\end{eqnarray}

Now we can combine \eqref{finaldiv}, \eqref{firstthird} and \eqref{newjump1223}, and through a use of the criticality conditions \eqref{wally} and 
\eqref{intercrit} of Theorem \ref{critthm2} we find that all integrals over the four curves $\Gamma_{ij}$ drop, leaving only 
\begin{eqnarray}
&&\frac{d}{dt}_{|_t=0} E_0(u^t)=\nonumber\\
&&-\frac{K(0)}{2}\big(\htan_{01}(0)+\htan_{03}(0)\big)-K\big(u_1(p)\cdot\nu_{12}(0)\big)\htan_{12}-K\big(u_2(p)\cdot\nu_{23}(0)\big)\htan_{23}\nonumber\\
&&
+L\bigg\{ \dive\,u_1(p)(u_1(p)\cdot\tau_{01}(0)) \hnor_{01}(0)  +    \dive\,u_3(p)(u_3(p)\cdot\tau_{03}(0)) \hnor_{03}(0)    \bigg\} \nonumber\\
&&
-L\bigg\{\big(\dive\,u_1(p)+\dive\,u_2(p)\big)\big(u_1(p)\cdot\tau_{12}(0)\big)\hnor_{12}+  
             \big(\dive\,u_2(p)+\dive\,u_3(p)\big)\big(u_2(p)\cdot\tau_{23}(0)\big)\hnor_{23}  \bigg\}\nonumber
\end{eqnarray}
Recall now that $\htan_{01}(0)=X(p)\cdot \tau_{01}(0)$, $\hnor_{01}(0)=X(p)\cdot \nu_{01}(0)$, etc. Thus, the arbitrary value of the vector $X(p)$, implies that a vanishing first variation $\frac{d}{dt}_{|_t=0} E_0(u^t)=0$ leads to the necessary condition at a junction $P$
of the form
\begin{eqnarray}
&&\frac{K(0)}{2}\big(\tau_{01}+\tau_{03}\big)+K\big(u_1\cdot\nu_{12}\big)\tau_{12}+K\big(u_2\cdot\nu_{23}\big)\tau_{23}\nonumber\\
&&
=L\bigg\{ \dive\,u_1(u_1\cdot\tau_{01}) \nu_{01}  + \dive\,u_3(u_3\cdot\tau_{03}) \nu_{03}    \bigg\} \nonumber\\
&&
-L\bigg\{\big(\dive\,u_1+\dive\,u_2\big)\big(u_1\cdot\tau_{12}\big)\nu_{12}+  
             \big(\dive\,u_2+\dive\,u_3\big)\big(u_2\cdot\tau_{23}\big)\nu_{23}  \bigg\}\nonumber\\
             &&\label{junk}
\end{eqnarray}
where all quantities above are evaluated at the junction $P$.

\end{proof}
\newpage
\noindent{\bf Acknowledgments.} {\it PS, MR and RV acknowledge the support from NSF DMS-1362879 and a Simons Collaboration grant 585520. RV also acknowledges the support from an Indiana University College of Arts and Sciences Dissertation Year Fellowship. The research of RV was also partially supported by the Center for Nonlinear Analysis at Carnegie Mellon University and by NSF DMS-1411646. DG acknowledges the support from NSF DMS-1729538.}

\bibliographystyle{acm} 

\bibliography{GNSV} 

\end{document}